\newtheorem{thm}{Theorem}
\newtheorem{lem}[thm]{Lemma}
\newtheorem{cor}[thm]{Corollary}
\newtheorem{prop}[thm]{Proposition}
\newtheorem{conj}[thm]{Conjecture}
\theoremstyle{definition}
\newtheorem{definition}[thm]{Definition}
\newtheorem{rem}[thm]{Remark}
\newtheorem{rems}[thm]{Remarks}
\newtheorem{ex}[thm]{Example}
\numberwithin{equation}{section}
\numberwithin{thm}{section}
\numberwithin{figure}{section}
\newcommand{\up}{{\rm up}}
\newcommand{\down}{{\rm dn}}
\newcommand{\al}{\alpha}
\newcommand{\C}{\mathbb{C}}
\newcommand{\Dec}{\mathrm{Dec}}
\newcommand{\End}{\mathrm{End}}
\newcommand{\Frac}{\mathrm{Frac}}
\newcommand{\F}{\mathrm{Fun}}
\newcommand{\geh}{\mathfrak{g}}
\newcommand{\Hom}{\mathrm{Hom}}
\newcommand{\hs}{\widehat{s}}
\newcommand{\Inc}{\mathrm{Inc}}
\newcommand{\ip}[2]{\langle #1\,,\,#2\rangle}
\newcommand{\K}{{\mathbb K}}
\newcommand{\la}{\lambda}
\newcommand{\La}{\Lambda}
\newcommand{\mn}{\mathrm{up}}
\newcommand{\OO}{\mathcal{O}}
\newcommand{\Pcp}{\Phi^{\vee+}}
\newcommand{\Pd}{\mathcal{D}}
\newcommand{\Pu}{\mathcal{U}}
\newcommand{\pnt}{\mathrm{pt}}
\newcommand{\Q}{\mathbb{Q}}
\newcommand{\R}{\mathbb{R}}
\newcommand{\rcht}{\overline{\mathrm{rht}}}
\newcommand{\rht}{\mathrm{rht}}
\newcommand{\res}{\mathrm{res}}
\newcommand{\sgn}{\mathrm{sgn}}
\newcommand{\St}{\mathcal{S}}
\newcommand{\Ta}{\mathcal{T}}
\newcommand{\ts}{\widetilde{s}}
\newcommand{\tw}{\tilde{w}}
\newcommand{\tz}{\tilde{z}}
\newcommand{\UU}{\mathrm{U}}
\newcommand{\wt}{\mathrm{wt}}
\newcommand{\Xc}{X^\circ}
\newcommand{\Z}{{\mathbb Z}}
\begin{document}

\title{Equivariant $K$-Chevalley Rules for Kac-Moody Flag Manifolds}

\author{Cristian Lenart}
\address{Department of Mathematics and Statistics, State University of New York at Albany, Albany, NY 12222}
\email{clenart@albany.edu}

\thanks{C. Lenart was partially supported by the National Science Foundation grant DMS--1101264}

\author{Mark Shimozono}
\address{Department of Mathematics, Virginia Polytechnic Institute
and State University, Blacksburg, VA 24061-0123 USA}
\email{mshimo@vt.edu}

\thanks{M. Shimozono was partially supported by the National Science Foundation grants 
DMS--0652648 and DMS--0652641}

\begin{abstract} Explicit combinatorial cancellation-free rules are given for the
product of an equivariant line bundle class with a Schubert class in
the torus-equivariant $K$-theory of a Kac-Moody flag manifold. The weight of the line bundle
may be dominant or antidominant, and the coefficients may be described either by
Lakshmibai-Seshadri paths or by the alcove model of the first author and Postnikov \cite{LP,LP1}.
For Lakshmibai-Seshadri paths, our formulas are the Kac-Moody generalizations 
of results of Griffeth and Ram \cite{GR} and Pittie and Ram \cite{PR} for finite dimensional flag manifolds.
A gap in the proofs of the mentioned results is addressed.
\end{abstract}

\maketitle

\section{Introduction}

\subsection{Chevalley formulas}
The Chevalley formula~\cite{Che} is a multiplication formula in the cohomology ring of 
a generalized flag variety $G/B$, where $G$ is a complex semisimple Lie group, and 
$B$ is a Borel subgroup. It expresses the action of multiplication by the class of a line bundle
$L^\la$ with respect to the basis of Schubert classes, and it implies a rule
for multiplication by a divisor class, known as Monk's rule in type~$A$.
The surjectivity of Borel's homomorphism in this setting implies that
the ring structure is determined by this rule.

One may consider the $K$-theory of $G/B$, equivariant with
respect to a maximal torus $T\subset B$,
and the multiplication by a line bundle class on the basis of 
equivariant classes of structure sheaves of Schubert varieties.
By forgetting equivariance and applying the Chern map, one may recover
the original Chevalley formula from an equivariant $K$-theoretic one.
The multiplication in this ring is again determined by a rule for
the product by line bundles: indeed, the equivariant Atiyah-Hirzebruch homomorphism
surjects.

The first such equivariant $K$-Chevalley formula, due to Pittie and Ram \cite{PR}, is described using
the combinatorics of Lakshmibai-Seshadri (LS) paths \cite{LS}. 
It applies to a dominant weight $\la$, and it is 
a positive formula (i. e., it has no cancellations). It was derived using a commutation relation 
in the $K$-theoretic affine nilHecke algebra of Kostant and Kumar \cite{KK:K}.
The same approach was applied to an antidominant weight 
$\la$ by Griffeth and Ram \cite{GR}, and the resulting formula is also cancellation-free.
The Pittie-Ram formula was explained geometrically  by Littelmann and Seshadri \cite{LS2}, 
based on standard monomial theory \cite{LS}. Willems \cite{Wi} gave a $K$-theory Chevalley formula 
for an arbitrary weight $\lambda$, which has cancellations even when $\lambda$ is dominant or antidominant.

A $K$-Chevalley formula which works for an arbitrary weight $\la$, and which is cancellation-free 
when $\la$ is dominant or antidominant, was given by the first author and Postnikov \cite{LP} 
in terms of their alcove model (cf. also \cite{LP1}). 
Besides allowing $\la$ to be an arbitrary weight, 
the alcove model formula has other advantages over the previous formulas. 
For example, the Deodhar lifts (from $W/W_\la$ to $W$, where $W_\la$ is the stabilizer of $\la$), 
which are required in the Pittie-Ram formula, and which are given by a 
nontrivial recursive procedure, are completely avoided. The only computations necessary
for the alcove model involve Bruhat covers and cocovers, together with a simple lex order.

We give four cancellation-free Chevalley formulas for the torus-equivariant $K$-theory of 
Kashiwara's Kac-Moody thick flag manifold \cite{Kas}: for dominant and antidominant weights,
both cases being described in terms of LS paths and the alcove model. Note that both  
LS paths and the alcove model were already available in the Kac-Moody setting.
Although it is known that the equivariant Borel and Atiyah-Hirzebruch maps need not be
surjective outside of the finite-dimensional case \cite{Ku}, strong evidence has arisen that
knowing the product by line bundles still determines the ring structure
in equivariant $K$-theory. For thick Kac-Moody flag schemes associated with root systems
of affine type, it is shown in \cite{KS} that, after inverting 
some scalars which multiply trivially against Schubert classes,
the equivariant Atiyah-Hirzebruch map is surjective.

\subsection{Approach to proofs} 
The LS path formulas are proved first, by extending the Pittie-Ram approach 
to the Kac-Moody setting. The first problem to address is the absence of a top Schubert class, 
which previously played a crucial role in the passage from the commutation relation in the 
Hecke algebra to the Chevalley formula. This is circumvented using
equivariant localization in $K$-theory and the action of the $K$-theoretic nilHecke ring 
on the torus equivariant $K$-theory of a Kac-Moody flag manifold. 
This method of computation was pioneered by
Kostant and Kumar \cite{KK:K} for Kac-Moody flag ind-varieties, but it is perhaps most naturally
interpreted in the geometry of the thick Kac-Moody flag schemes \cite{Kas,KS}.

Secondly, while working in the Kac-Moody setup, it is necessary to address a gap in the proofs in \cite{PR,GR}.\footnote{We 
correct the second formula in \cite[Theorem 3.5]{GR},
whose analogue is \eqref{comm2} in Theorem \ref{T:maintheorem}.} These proofs
are complete only when $\la$ is regular (i. e., when $W_\la$ and the lifts are trivial), 
as there is no treatment of the rather subtle interaction between the crystal graph structure 
on LS paths \cite{Li1,Li2} and the Deodhar lifts. The study of this interaction constitutes the technical
core of the proof of the LS path Chevalley formulas.

The alcove model Chevalley formulas are then derived from the LS path versions.
In \cite{LP1} a crystal graph isomorphism was given between LS paths and the alcove model;
it was defined by perturbing an LS path and taking a limit.
We refine this bijection to establish a bijection between the
subcollections of LS paths and alcove paths that are relevant to the Chevalley formulas.
The refined bijection depends on a new description of the 
bijection in \cite{LP1}. Some crucial ingredients in the refined bijection 
are the notion of EL-shellability of the Bruhat order on a Coxeter group based on 
Dyer's reflection orders \cite{Dyer},
and the description of LS paths using a weaker version
of the Bruhat order called the $b$-Bruhat order \cite{LS} (see also \cite{St}). These considerations completely clarify
the $\lambda$-chain or alcove model due to the first author and Postnikov \cite{LP,LP1}.

\subsection{Additional combinatorial consequences and examples}
As a byproduct, we obtain alcove path formulations for the Demazure and opposite Demazure subcrystals of highest
weight crystals for Kac-Moody algebras. For Demazure crystals,  
the alcove path formulation was previously known in \cite{LP} in the case of finite root systems.

Let us now point out some combinatorial features of LS paths and the alcove model
regarding Demazure and opposite Demazure crystals. 
LS paths are the paragon for realizing the crystal graphs of highest weight modules over
quantum groups. One of their historically well-known bonus features is 
the immediate detection of membership, via initial and final directions,
in a Demazure (resp. opposite Demazure) subcrystal; the latter is the crystal subgraph whose underlying module is a submodule
of a highest weight module generated from an extremal weight vector by the 
action of an upper (resp. lower) triangular subalgebra
$U_q(\mathfrak{b}_+)$ (resp. $U_q(\mathfrak{b}_-)$) of the quantum group $U_q(\geh)$. Another feature of LS paths is the efficient construction of Demazure crystals using stringwise crystal graph operations starting from the
highest weight vector \cite{Kas2}.

It is a consequence of the work on the alcove model \cite{LP, LP1} that, by transport through
the isomorphism between LS paths and alcove paths, the former can be equipped with some
operations which are fundamental to the alcove model, but which are new for LS paths. Indeed, 
to the authors' knowledge, the mentioned operations, based on Bruhat covers, were not previously part of the crystal graph technology
in the literature. 

In the alcove model, the most natural way to generate the Demazure crystal is to 
construct a rooted tree which starts at the \emph{lowest} weight vector. 
More precisely, the root vertex is labeled by any Weyl group element that
sends the highest weight of the Demazure module to its 
lowest weight, and a child of a tree vertex $v$ is determined by
a Bruhat cocover of $v$ together with an integer decoration, 
subject to a pruning condition that compares this edge with the one
coming down to $v$. The objects in the model are given by 
the paths from the root to any vertex.

For the alcove model, the opposite Demazure crystal is generated in an entirely similar
manner to the Demazure crystal. Instead, one
starts at the highest extremal weight vector,
and generates decorated Bruhat covers using a different but equally simple pruning condition for branches.
In contrast, the authors are unaware of a simple way to efficiently generate the opposite Demazure crystal using
crystal graph operations.

We conclude the paper with some examples. These include a 
Chevalley formula for the $K$-theory of the type $A$ affine Grassmannian $Gr_{SL_n}$, 
which uses the $n$-restricted partitions in the Misra-Miwa model \cite{mm}.

\subsection{Future work}
Since the alcove model $K$-Chevalley formula works in a uniform manner 
for an arbitrary weight $\la$ in the finite-dimensional setting,
we speculate that such a formula exists in Kac-Moody generality. We believe that 
the general approach in \cite{LP} can be extended in spite of some obvious obstacles. 
New ideas are required for the definitions
associated with a general weight $\la$, especially for weights outside the Tits cone, such as
weights of level zero for root systems of affine type.
Note that the LS path formula, used here as a starting point, is currently available 
only for dominant and antidominant $\la$. 

\subsection{Computer implementation}
We have implemented the various Kac-Moody Chevalley rules in \texttt{Sage} \cite{Sage},
an open source mathematics software system, using the \texttt{sage-combinat}
extension \cite{SC}. After fine-tuning and a period of peer review, these programs will be
incorporated into \texttt{sage-combinat} for free public distribution.

\subsection*{Acknowledgements}
We are grateful to Matthew Dyer for helpful discussions.
Thanks to Nicolas Thierry for technical assistance 
involving \texttt{sage-combinat} \cite{SC}, and to Anne Schilling for her work
on crystal graphs in \texttt{sage-combinat}.
Thanks to both Anne and Nicolas for a \texttt{sage} program which implements
localization of equivariant cohomology Schubert classes for flag manifolds
of finite and affine type, solving overnight a homework problem posed 
by the second author in a summer school lecture
at the Fields Institute in the summer of 2010.

\section{The Kac-Moody thick flag manifold and equivariant $K$-theory}

\subsection{Equivariant $K$-Chevalley coefficients}
Let $X$ be a {\em Kac-Moody thick flag manifold} \cite{Kas} over $\C$ with Dynkin
node set $I$. If the underlying Lie algebra $\geh$ is
infinite-dimensional then $X$ is a scheme of infinite type (as
opposed to the flag ind-scheme studied in \cite{KK:K,Ku}). It contains a canonical point
$x_0$, and it has an action of the Borel group $B$ and each of the
minimal parabolic subgroups $P_i$. Let $W$ be the Weyl group. For $w\in W$, let $\Xc_w := B
\cdot w x_0$ be the Schubert cell. $\Xc_w$ is the Spec of a
polynomial ring (with a countably infinite number of generators if
$\dim \geh = \infty$). Its Zariski closure is the {\em Schubert variety}
$X_w$. It has codimension $\ell(w)$ in $X$. There are cell decompositions
\begin{align}
  X = \bigsqcup_{w\in W} \Xc_w \qquad\qquad X_w = \bigsqcup_{\substack{v\in W \\ v \ge w}} \Xc_v
\end{align}
where $\ge$ is the Bruhat order on $W$. Let $S\subset W$ be a
nonempty finite Bruhat order ideal (if $w\in S$ and
$v\le w$ then $v\in S$). Let
\begin{align}
  \Omega_S := \bigcup_{w\in S} w \cdot \Xc_e = \bigsqcup_{w\in S}
  \Xc_w\,.
\end{align}
Let$K^B(\Omega_S)$ be the Grothendieck group of $B$-equivariant
coherent sheaves on the $B$-stable quasi-compact open subset $\Omega_S$. 
For $w\in W$ the structure sheaf $\OO_{X_w}$ is a $B$-equivariant
coherent $\OO_X$-module which by restriction defines a class
$[\OO_{X_w}]\in K^B(\Omega_S)$ provided that $w\in S$. Define
\begin{align}
  K^B(X) = \overset{\longleftarrow}{\lim_S}\, K^B(\Omega_S).
\end{align}

There is an isomorphism $K^T(X)\cong K^B(X)$, where $T\subset B$
is the maximal torus. We have \cite{KS}
\begin{align}\label{E:KTXSchubert}
  K^T(X) = \prod_{w\in W} K^T(\pnt) [\OO_{X_w}].
\end{align}
In particular, the product of two Schubert classes $[\OO_{X_u}]$
and $[\OO_{X_v}]$ in $K^T(X)$ may be an infinite $K^T(\pnt)$-linear combination
of classes $[\OO_{X_w}]$ if $X$ is infinite-dimensional.

For a weight $\la$, let $L^\la$ denote the $T$-equivariant line
bundle on $X$ of weight $\la$. Define the {\em equivariant $K$-Chevalley
coefficients} $a^w_{v\la}\in K^T(\pnt)$ by
\begin{align} \label{E:constprod}
  [L^\la] \,[\OO_{X_v}] = \sum_{w\in W} a^w_{v\la} [\OO_{X_w}].
\end{align}

Our main result, Corollary \ref{C:Chevalley}, gives explicit cancellation-free combinatorial formulas 
in terms of Lakshmibai-Seshadri (LS) paths for 
the Chevalley multiplicities $a^w_{v\la}$ if $\la$ is dominant or antidominant.
In Theorem \ref{kchev}, we express these Chevalley rules in terms of the $\la$-chains
of the first author and Postnikov \cite{LP,LP1}.

\subsection{The nilHecke ring and Chevalley coefficients}
The coefficients $a^w_{v\la}$ may be computed using the $K$-theoretic
\emph{nilHecke} ring of Kostant and Kumar \cite{KK:K}, which we recall here.

There are isomorphisms $K^T(\pnt) \cong R(T)\cong \Z[\La]$, where
$\La$ is the weight lattice. We shall use these identifications
without additional mention in the sequel. Let $\{\al_i^\vee\mid i\in I\}\subset\La^*=\Hom_{\Z}(\La,\Z)$
be the simple coroots, and $\ip{\cdot}{\cdot}:\La^*\times\La\to\Z$ be the evaluation pairing.
The Weyl group $W$ acts on $\La$ by $s_i \cdot \la = \la - \ip{\al_i^\vee}{\la}\al_i$
for $i\in I$ and $\la\in\La$. This induces an action of $W$ on $R(T)$ and $Q(T)=\Frac(R(T))$.
For $\la\in\La$, let $e^\la\in R(T)$ denote the isomorphism class of the one-dimensional
$T$-module of weight $\la$.

For $i\in I$, define the operator $T_i$ on the fraction field
$Q(T)=\Frac(R(T))$ by
\begin{align} \label{E:Tformula}
  T_i = (1 - e^{\al_i})^{-1}(s_i - 1).
\end{align}
For $\la\in\La$, we have \cite{LSS}
\begin{align} \label{E:TonE}
  T_i \cdot e^\la =\frac{e^\la-e^{s_i\la}}{e^{\alpha_i}-1}=
  \begin{cases}
  e^\la(e^{-\al_i}+e^{-2\al_i}\dotsm+e^{(-\ip{\al_i^\vee}{\la})\al_i}) &
  \text{if $\ip{\al_i^\vee}{\la} > 0$} \\
  0 & \text{if $\ip{\al_i^\vee}{\la} = 0$} \\
  - e^\la(1+e^{\al_i}+\dotsm+e^{(-1-\ip{\al_i^\vee}{\la})\al_i}) &
  \text{if $\ip{\al_i^\vee}{\la} < 0$.}
  \end{cases}
\end{align}
It follows that $T_i$ acts on $R(T)$. The {\em Demazure
operator} \cite{Dem} is $D_i=1+T_i$. The $T_i$ satisfy the braid relations and
$T_i^2=-T_i$ for all $i\in I$. Therefore we may define $T_w =
T_{i_1}\dotsm T_{i_N}$ for any reduced decomposition
$w=s_{i_1}\dotsm s_{i_N}$, where $i_1,\dotsc,i_N\in I$. Let $\K_0$ be
the $0$-Hecke algebra, which is the subring of $\End(R(T))$
generated by $\{T_i\mid i\in I\}$. We have the following identity of
operators on $R(T)$ \cite[(2.6)]{LSS}, where $e^\la$ denotes the
operator of left multiplication by $e^\la\in R(T)$:
\begin{align}\label{E:commute}
  T_i\, e^\la = (T_i \cdot e^\la) + e^{s_i\la} T_i.
\end{align}
The nilHecke ring $\K$ is by definition the smash product of $\K_0$
and $R(T)$ (acting on itself by left multiplication). By
\eqref{E:commute} we have
\begin{align}
  \K = \bigoplus_{w\in W} R(T) T_w.
\end{align}
Let $b^w_{v\la}\in K^T(\pnt)$ be defined by the following relation
in $\K$:
\begin{align}  \label{E:constcomm}
  T_w\, e^\la = \sum_{v\in W} b^w_{v\la}\, T_v.
\end{align}

\begin{lem}\label{L:consteq} We have
\begin{align}\label{E:consteq}
  a^w_{v\la} = b^w_{v\la}.
\end{align}
\end{lem}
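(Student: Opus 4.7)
The plan is to identify $a^w_{v\la}$ and $b^w_{v\la}$ via equivariant localization and the Kostant--Kumar type duality between the nilHecke ring $\K$ and $K^T(X)$, as developed for thick Kac-Moody flag schemes in \cite{KS}. The $T$-fixed points of $X$ are indexed by $W$, and localization at these points embeds $K^T(X)$ into $\prod_{u\in W} R(T)$. The image is a subring $\Psi$ paired nondegenerately (over $R(T)$) with $\K$; under this pairing, the basis $\{T_w\}_{w\in W}$ of $\K$ is dual, after an upper-triangular change of basis in the Bruhat order, to the equivariant Schubert basis $\{[\OO_{X_w}]\}_{w\in W}$ of $\Psi$.

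The crucial ingredient is the adjunction
\begin{equation*}
\langle T_u \cdot e^\la,\, [\OO_{X_v}] \rangle \;=\; \langle T_u,\, [L^\la] \cdot [\OO_{X_v}] \rangle
\end{equation*}
for all $u, v \in W$ and all weights $\la$. Geometrically, this reflects the fact that $[L^\la]$ localizes to $e^{u\la}$ at the $T$-fixed point indexed by $u$, while on the algebraic side the commutation rule \eqref{E:commute} is precisely the shadow of that pointwise multiplication by the localized weight. Assuming this adjunction, expanding the right-hand side using \eqref{E:constprod} and extracting the coefficient paired with $T_u$ yields $a^u_{v\la}$, while expanding the left-hand side using \eqref{E:constcomm} yields $b^u_{v\la}$. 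Relabeling indices then gives \eqref{E:consteq}.

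The main technical obstacle is the careful setup of this duality in the thick Kac-Moody setting. By \eqref{E:KTXSchubert}, $K^T(X)$ is a direct product over $W$ while $\K$ is a direct sum, so the pairing is not the naive one on a finite-dimensional space, and extracting well-defined scalars requires appropriate support/finiteness conditions on one factor. I would rely on the formalism of \cite{KS} (which adapts \cite{KK:K} from the ind-scheme setting to the thick setting) for this step rather than re-derive it. Once that framework is in place, the identity $a^w_{v\la} = b^w_{v\la}$ is a formal consequence of the adjointness relation displayed above; the triangular change-of-basis between $[\OO_{X_w}]$ and the literal dual basis of $\{T_w\}$ affects both sides in the same way and therefore cancels.
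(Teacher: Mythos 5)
Your proposal follows the same overall route as the paper: localize at the $T$-fixed points, realize $K^T(X)$ inside $\F(W,R(T))$, and exploit a perfect pairing between the nilHecke ring $\K$ and the image $\Psi$ of localization, together with the adjunction $\ip{a e^\la}{\psi} = \ip{a}{L^\la \psi}$ (the paper's Lemma~\ref{L:linebundle}). However, there is a genuine error in your statement of the key duality, and your attempted fix for it does not work.

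You assert that $\{T_w\}$ is dual to $\{[\OO_{X_w}]\}$ only \emph{after an upper-triangular change of basis}. This is not the situation here: by \eqref{E:dualbases} (a fact quoted from \cite{LSS} and valid in the thick Kac--Moody setting by \cite{KS}), the localized Schubert classes $\psi^v = \res([\OO_{X_v}])$ are \emph{exactly} dual to $\{T_w\}$, i.e.\ $\ip{T_w}{\psi^v} = \delta_{vw}$. Once you have this exact duality, the proof is the short chain of equalities in the paper and there is nothing left to argue.

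More seriously, even if there were a nontrivial triangular pairing $c_{uw} = \ip{T_u}{\psi^w}$, it would not ``affect both sides in the same way and therefore cancel.'' Plugging into your adjunction gives
\begin{align*}
\sum_w a^w_{v\la}\, c_{uw} \;=\; \sum_z b^u_{z\la}\, c_{zv}\,,
\end{align*}
which in matrix form (with $A_{uv}=a^u_{v\la}$, $B_{uv}=b^u_{v\la}$, $C_{uv}=c_{uv}$) reads $CA = BC$, hence $A = C^{-1} B C$. This is equal to $B$ only when $C$ is the identity (or commutes with $B$), and there is no reason to expect commutation; the matrix $C$ does \emph{not} appear symmetrically on the two sides, so the cancellation you invoke fails. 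The cure is simply to use the correct statement $C = I$. With that correction, your argument coincides with the paper's proof.
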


\subsection{Localization and the proof of Lemma {\ref{L:consteq}}}
Lemma \ref{L:consteq} may be proved using localization. We use the
notation of \cite{LSS}. The computations here are essentially due to 
\cite{KK:K} but technically we are connecting the computations in
\cite{KK:K} to the thick Kac-Moody flag geometry.

Restriction to $T$-fixed points yields an injective ring homomorphism $\res: K^T(X) \to
K^T(X^T)$. Let $\Psi$ be the image of $\res$. Since $X^T \cong W$,
one may view $K^T(X^T)$ as the $K^T(\pnt)$-algebra $\F(W,R(T))$ of
functions $W\to K^T(\pnt)$.

Let $\psi^v = \res([\OO_{X_v}])$ for $v\in W$. Then
\eqref{E:KTXSchubert} translates to
\begin{align}
  \Psi = \prod_{v\in W} R(T) \psi^v.
\end{align}
The functions $\psi^v$ may be characterized as follows. Let $\K_Q :=
Q(T) \otimes_{R(T)} \K = Q(T) \otimes_{\Q} \Q[W]$, in light of
\eqref{E:Tformula}. Let $\Hom_{Q(T)}(\K_Q,Q(T))$ be the $\Q$-vector
space of left $Q(T)$-module homomorphisms $\K_Q\to Q(T)$. A function
$\psi\in \F(W,Q(T))$ may be regarded as an element of
$\Hom_{Q(T)}(\K_Q,Q(T))$ by
\begin{align}
\psi(\sum_{w\in W} a_w w) &= \sum_w a_w \psi(w)\,,
\end{align}
where $a=\sum_{w\in W} a_w w\in Q(T) \otimes_{\Q} \Q[W]$. Evaluation
defines a perfect pairing $$\ip{\cdot}{\cdot}: \K_Q \times
\Hom_{Q(T)}(\K_Q,Q(T)) \to Q(T).$$ This restricts to a perfect
pairing $\K \times \Psi \to R(T)$, and with respect to this pairing,
$\{T_w\mid w\in W\}$ and $\{\psi^v\mid v\in W\}$ are dual bases as
left modules over $R(T)$ \cite{LSS}:
\begin{align}\label{E:dualbases}
  \ip{T_w}{\psi^v} = \delta_{vw}\qquad\text{for $v,w\in W$.}
\end{align}

By abuse of notation, for $\la\in\La$, let $L^\la$ denote the image
under $\res$ of the class $[L^\la]\in K^T(X)$. We have
\begin{align}
  L^\la(w) = e^{w\la} \qquad\text{for all $w\in W$.}
\end{align}

\begin{lem}\label{L:linebundle} For any $\la\in\La$, $a\in \K$, and
$\psi\in \Psi$ we have
\begin{align} \label{E:linebundle}
  \ip{a}{L^\la \psi} = \psi(a e^\la).
\end{align}
\end{lem}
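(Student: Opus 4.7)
The plan is to unfold the definitions on both sides and reduce to a single-element case. First I would use that $\K_Q = Q(T) \otimes_\Q \Q[W]$ as a left $Q(T)$-module, with basis the elements of $W$, to write $a = \sum_{w\in W} a_w w$ with $a_w \in Q(T)$. Both sides of \eqref{E:linebundle} are $Q(T)$-linear in $a$: the right side because $\psi$ is a $Q(T)$-linear homomorphism $\K_Q \to Q(T)$, and the left side because the pairing is evaluation of this same homomorphism. Thus it suffices to verify the identity for $a = w \in W$.

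Next I would establish the commutation relation $w\, e^\la = e^{w\la}\, w$ inside $\K_Q$. For a simple reflection $s_i$, inverting \eqref{E:Tformula} gives $s_i = 1 + (1-e^{\al_i})T_i$, and then the commutation \eqref{E:commute} together with the explicit formula \eqref{E:TonE} yields $s_i\, e^\la = e^{s_i\la}\, s_i$ after a short rearrangement. The general statement follows by induction on $\ell(w)$ using a reduced word.

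With this in hand, the right-hand side for $a = w$ becomes
\[
\psi(w\, e^\la) \;=\; \psi(e^{w\la}\, w) \;=\; e^{w\la}\,\psi(w),
\]
using the commutation and the $Q(T)$-linearity of $\psi$. For the left-hand side, I would use that multiplication in $\Psi \subset K^T(X^T) \cong \F(W,R(T))$ is pointwise, so $(L^\la\psi)(w) = L^\la(w)\,\psi(w) = e^{w\la}\,\psi(w)$, and therefore $\ip{w}{L^\la\psi} = (L^\la\psi)(w) = e^{w\la}\,\psi(w)$, matching the other side. The proof is mostly bookkeeping; the only calculation that demands any care is the derivation of $s_i\, e^\la = e^{s_i\la}\, s_i$ in $\K_Q$ from the presentation of the nilHecke ring, but this is a direct manipulation of the defining formulas.
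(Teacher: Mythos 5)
Your proof is correct and follows essentially the same route as the paper's: both expand $a=\sum_{w\in W} a_w w$, evaluate the left side pointwise to get $\sum_w a_w e^{w\la}\psi(w)$, and use $w\,e^\la = e^{w\la}\,w$ on the right side to match. The only difference is that you derive $s_i\,e^\la = e^{s_i\la}\,s_i$ explicitly from \eqref{E:commute} and \eqref{E:TonE}, whereas the paper treats this commutation as immediate from the smash-product definition of $\K$ (equivalently, the identification $\K_Q\cong Q(T)\otimes_\Q\Q[W]$ as a twisted group algebra), so your extra derivation is a harmless detour.
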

\begin{proof} Write $a\in \K$ as $a=\sum_{w\in W} a_w w$ for
$a_w\in Q(T)$. We have
\begin{align*}
  \ip{a}{L^\la \psi} &= (L^\la \psi)(a) \\
  &= (L^\la \psi)(\sum_w a_w w) \\
  &= \sum_w a_w (L^\la \psi)(w) \\
  &= \sum_w a_w L^\la(w) \psi(w) \\
  &= \sum_w a_w e^{w\la} \psi(w).
\end{align*}
On the other hand,
\begin{align*}
  \psi(a e^\la) &= \psi(\sum_w a_w w e^\la) \\
  &= \psi(\sum_w a_w e^{w\la} w) \\
  &= \sum_w a_w e^{w\la} \psi(w).
\end{align*}
\end{proof}

\begin{proof}[Proof of Lemma {\rm \ref{L:consteq}}] We have
\begin{align*}
  a^w_{v\la} &= \ip{T_w}{L^\la \psi^v} &\qquad&\text{by
  \eqref{E:constprod} and \eqref{E:dualbases}} \\
&= (L^\la \psi^v)(T_w) && \\
&= \psi^v(T_w e^\la) &\qquad&\text{by Lemma \ref{L:linebundle}} \\
&= \psi^v(\sum_u b^w_{u\la} T_u) &&\text{by \eqref{E:constcomm}} \\
&= \sum_u b^w_{u\la} \psi^v(T_u) && \\
&= \sum_u b^w_{u\la} \delta_{uv} && \text{by \eqref{E:dualbases}} \\
&= b^w_{v\la}.
\end{align*}
\end{proof}

\subsection{Recurrence}
A recurrence for the coefficients $b^w_{z\la}$ is obtained as follows. Assuming that $s_iw<w$, we have
\begin{align*}
  \sum_{z\in W} b^w_{z\la} T_z  &=
  T_w e_\la \\
  &= T_i T_{s_i w}  e^\la \\
  &= T_i \sum_y b^{s_iw}_{y\la} \,T_y \\
  &= \sum_y \left( T_i \cdot b^{s_iw}_{y\la} + s_i(b^{s_iw}_{y\la}) T_i\right) T_y\,. 
\end{align*}
Using
\begin{align}
  T_i T_y = \begin{cases}
  T_{s_iy} & \text{if $s_iy>y$} \\
  - T_y & \text{if $s_iy<y$}
  \end{cases}
\end{align}
and taking coefficients of $T_z$ on both sides, we have
\begin{align}
  b^w_{z\la} = T_i \cdot b^{s_iw}_{z\la} +
  \chi(s_iz<z) \,s_i(b^{s_iw}_{s_iz,\la} -   b^{s_iw}_{z\la}).
\end{align}
Summarizing, for $s_iw<w$ and $z<s_iz$ we have
\begin{align}
\label{E:bz}
  &b^w_{z\la} = T_i \cdot b^{s_iw}_{z\la}\\
\label{E:bsz}
  &b^w_{s_iz,\la} = T_i \cdot b^{s_iw}_{s_iz,\la} + s_i(b^{s_iw}_{z\la}-b^{s_iw}_{s_iz,\la})\,.
\end{align}
By iterating this recurrence, one may obtain an explicit formula for
$b^w_{v\la}$ which typically has a lot of cancellation.
Fix a reduced word $a_1a_2\dotsm a_N$ of $w$. Let $E(v,a)$ be the
set of sequences $\epsilon=(\epsilon_1,\dotsc,\epsilon_N)\in
[0,1]^N$ such that
\begin{align*}
  \prod_{\substack{i\\ \epsilon_i=1}} T_{a_i} = \sgn(\epsilon) T_v\,,
\end{align*}
where $\sgn(\epsilon)\in \{\pm 1\}$, and the product is ordered from
left to right by increasing $i$. Here note that $T_i^2=-T_i$. For
$\epsilon\in E(v,a)$, let $A_\epsilon=A_1A_2\dotsm A_N$, where
\begin{align*}
  A_i = \begin{cases}
  s_{a_i} & \text{if $\epsilon_i=1$} \\
  T_{a_i} & \text{if $\epsilon_i=0$.}
  \end{cases}
\end{align*}
Then we have
\begin{align} \label{E:explicitformula}
  b^w_{v\la} = \left(\sum_{\epsilon\in E(v,a)} \sgn(\epsilon) A_\epsilon \right) \cdot e^\la.
\end{align}

\begin{ex} In type $A_2$ consider $w=s_1s_2s_1$, $v=s_1$, and
$\la=2\omega_1+\omega_2=(310)$. Using the above reduced word $a$ for
$w$, we have $E(v,a)=\{(100),(001),(101)\}$ and
\begin{align*}
  b^w_{v\la} &= (s_1T_2T_1+T_1T_2s_1-s_1T_2s_1)\cdot e^{310} \\
&=e^{022}+e^{013}.
\end{align*}
\end{ex}

\section{The Chevalley formula in terms of LS paths}

\subsection{The main result}\label{mainresls} We recall the definition of \emph{Deodhar lifts} 
in Coxeter groups. For more details the reader may see Sections \ref{sbruhat} and \ref{deolifts}.

Let $W$ be a Coxeter group and let $S=\{s_i\mid i\in I\}$ be the set of simple reflections.
A reflection in $W$ is a $W$-conjugate of a simple reflection. The length $\ell=\ell(w)$
of $w\in W$ is the minimum $\ell$ such that $w = s_{i_1} s_{i_2}\dotsm s_{i_\ell}$ 
for $i_1,\dotsc,i_\ell\in I$. The (strong) Bruhat order $\le$ on $W$ is the partial order with covering relation
$v\lessdot w$ if $\ell(w)=\ell(v)+1$ and there is a reflection $r\in W$ such that $w = v r$.

For a subset $J\subset I$, let $W_J$ denote the subgroup of $W$ generated by $s_i$ for $i\in J$.
Let $W^J$ be the set of minimum length coset representatives in $W/W_J$.
The set $W^J$ inherits the Bruhat order from $W$.
The Bruhat order $\le$ on $W/W_J$ is defined by declaring that the bijection
$W^J\to W/W_J$ given by $w\mapsto w W_J$ is an isomorphism of posets.

\begin{prop} \label{P:Deodhar} \cite{D}  {\rm (1)} Let $\tau\in W/W_J$ and $v\in W$ be such that $v W_J \le \tau$ in $W/W_J$. Then the set 
\[  \{  w\in W\mid \text{$v\le w$ and $wW_J=\tau$} \}\]
has a Bruhat-minimum, which will be denoted by $\up(v,\tau)$.

{\rm (2)} Let $\tau\in W/W_J$ and $w\in W$ be such that $w W_J\ge\tau$ in $W/W_J$. Then the set 
\[  \{  v\in W\mid \text{$w\ge v$ and $vW_J=\tau$} \}\]
has a Bruhat-maximum, which will be denoted by $\down(w,\tau)$.\footnote{$\down$ is an abbreviation for ``down''.}
\end{prop}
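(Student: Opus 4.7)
The plan is to derive both parts from the Deodhar--Verma lifting property of Bruhat order, which I would state upfront: for $v\le w$ in $W$ and any simple reflection $s$, if $sw<w$ then $sv\le w$, and if in addition $sv>v$ then $v\le sw$; the symmetric statements hold when $sw>w$. This property descends to the quotient poset $W/W_J$ via the order-preserving bijection $W^J\to W/W_J$, giving the analogous statement for cosets.

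For Part (1), I would induct on $\ell(\tau^{\min})$, where $\tau^{\min}\in W^J$ is the minimum-length representative of $\tau$. The base case $\tau=W_J$ forces $v\in W_J$, and $v$ itself is then the required minimum of its own up-set inside the fiber. For the inductive step, choose a simple reflection $s$ with $s\tau^{\min}<\tau^{\min}$, equivalently $s\tau<\tau$ in $W/W_J$. If $sv<v$, set $v'=sv$; the lifting property in $W/W_J$ gives $v'W_J\le s\tau$, so by induction $u':=\up(v',s\tau)$ is defined and I would set $\up(v,\tau):=su'$. If instead $sv>v$, lifting in $W/W_J$ yields $vW_J\le s\tau$, so $u':=\up(v,s\tau)$ exists, and I would again set $\up(v,\tau):=su'$. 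In each sub-case I would verify that $su'>u'$ (so that $su'W_J=\tau$), that $v\le su'$ (from the lifting property applied to $v'\le u'$ or $v\le u'$), and minimality: given any $w$ with $v\le w$ and $wW_J=\tau$, the relation $s\tau<\tau$ forces $sw<w$ and $(sw)W_J=s\tau$, so the lifting property yields the appropriate inequality between $v$ or $sv$ and $sw$; the inductive minimality of $u'$ then gives $u'\le sw$, and a final application of lifting produces $su'\le w$.

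Part (2) is strictly dual, proved by induction on $\ell(w)$. One chooses a simple reflection $s$ with $sw<w$ and splits according to whether $s\tau<\tau$ or $s\tau>\tau$, using the symmetric form of the lifting property to reduce to $\down(sw,\tau)$ or $\down(sw,s\tau)$, then lifts back by multiplying by $s$. The main obstacle is the careful bookkeeping in the inductive step: the lifting property breaks into four sub-cases depending on whether multiplication by $s$ raises or lowers length on each of $v$, $w$, and $\tau$, and in each case one must confirm that the natural candidate actually lies in the fiber, dominates (resp.\ is dominated by) $v$ (resp.\ $w$), and is extremal with respect to this property. In particular, the length-additivity $\ell(su')=\ell(u')+1$ must be verified to guarantee $su'W_J=\tau$, which follows from $u'W_J=s\tau<\tau$ together with the fact that $u'$ is the minimal representative in its fiber above $v'$.
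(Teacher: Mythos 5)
The paper itself gives no proof of this proposition: it is a direct citation of Deodhar [D], so there is no argument in the source for you to be compared against. Your plan is the standard one — reduce along simple reflections and invoke the lifting (Z-)property — and this is essentially how the result is proved in the literature. For Part (1) your argument is sound: by fixing $s$ with $s\tau<\tau$ (possible whenever $\tau\neq W_J$), only the two sub-cases $sv<v$, $sv>v$ can arise, and in both you correctly reduce to $\up(\cdot,s\tau)$ and lift back by multiplying by $s$; the verification of $su'>u'$ does not even need minimality of $u'$, only $u'W_J=s\tau<s(s\tau)=\tau$ together with Proposition~\ref{P:coset}(2),(3).

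There is a genuine gap in Part (2). You choose $s$ so that $sw<w$ and then split on ``$s\tau<\tau$ or $s\tau>\tau$.'' But $\tau$ lives in $W/W_J$, and $s$ can fix $\tau$: for example, in type $A_3$ with $J=\{2\}$, take $w=s_2s_1$, $s=s_2$, $\tau=eW_J$; then $sw<w$ yet $s\tau=\tau$. Your two cases therefore do not cover all possibilities, and ``raises or lowers length on $\tau$'' is an incomplete trichotomy for cosets. The omitted case $s\tau=\tau$ is also the one that needs real care: the reduction must be to $u':=\down(sw,\tau)$ (same coset), after which $su'W_J=s\tau=\tau$ as well, and the correct answer is $\down(w,\tau)=\max(u',su')$. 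One must check separately that when $su'<u'$ the element $u'$ is already maximal below $w$, and when $su'>u'$ the element $su'$ lies below $w$ (this follows from $u'\le sw$, $w>sw$, $su'>u'$ via Proposition~\ref{P:Bruhat}) and is maximal; in both sub-cases the verification uses the lifting property once more, applied to an arbitrary competitor $v\le w$ with $vW_J=\tau$ according to whether $sv<v$ or $sv>v$. Without this third case and its sub-split, the induction for Part (2) is not closed.

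Two minor remarks. First, the statement in your last paragraph that $\ell(su')=\ell(u')+1$ ``follows from $u'W_J=s\tau<\tau$ \emph{together with} the fact that $u'$ is the minimal representative in its fiber above $v'$'' is overstated; the length increase follows from $u'W_J=s\tau$ and $s(s\tau)>s\tau$ alone, by writing $u'=(u')^J(u')_J$ and applying Proposition~\ref{P:coset}(2),(3) — minimality of $u'$ is irrelevant to that point. Second, when you invoke ``the lifting property descends to $W/W_J$,'' you should note that $s$ acting on a coset has three possible effects ($s\sigma<\sigma$, $s\sigma=\sigma$, $s\sigma>\sigma$); the descended statement you need is exactly Lemma~\ref{L:Bruhatcoset} of the paper, which already builds in the $s\sigma=\sigma$ case, and keeping that lemma in view would have flagged the missing case in Part (2).
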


Fix a dominant weight $\la\in\La^+$. Let $\Ta^\la$ be the set of \emph{Lakshmibai-Seshadri (LS) paths} 
of shape $\la$ \cite{LS}. For a precise characterization of LS paths, see Section \ref{ls-nonrec}. Recall that an LS path $p$ is a piecewise-linear map 
$p\::\:[0,1]\rightarrow {\mathfrak h}_{\mathbb R}^*$ with $p(0)=0$.
The endpoint of the path is $p(1)$. A path $p\in \Ta^\la$ may also be defined by a sequence of vectors
of the form $a_1 v_1, a_2 v_2,\dotsc, a_m v_m$, where the $a_i$ are positive rational numbers
summing to $1$, and the vectors $v_i$ are in the orbit $W\cdot \la$. The path is given by the sequence of points
$0$, $a_1v_1$, $a_1v_1+a_2v_2$, $\dotsc$, $a_1v_1+\dotsm+a_mv_m$.
The stabilizer $W_\la$ of $\la$
equals $W_J$, where $J = \{i\in I\mid s_i\cdot\la = \la\}$. If a vector is a positive real multiple
of an element of $W\cdot\la$, then we say that its direction is the corresponding element of $W/W_\la$.
This given, the directions of the vectors in an LS path decrease in the Bruhat order on $W/W_\la$. Denote
by $\iota(p) \in W/W_\la$ (resp. $\phi(p)\in W/W_\la$) the initial (resp. final) direction of $p$, i.e., 
the direction of the first (resp. last) vector in $p$.

Given an LS path $p\in \Ta^\la$ and $z\in W$ such that $z W_\la \le \phi(p)$, let $\up(w,p)\in W$ be defined
as follows. Let the LS path $p$ have directions 
\[\iota(p)=\sigma_1\ge\ldots\ge\sigma_m=\phi(p)\,.\]
Define the sequence of Weyl group elements
\[z=w_{m+1}\le w_m\le\ldots\le w_1=\up(z,p)\]
recursively by $w_i:=\up(w_{i+1},\sigma_i)$ for $i$ from $m$ down to $1$.
Given $w\in W$ such that $\iota(p) \le w W_\la$, define
\[w=w_0\ge w_1\ge\ldots\ge w_m=\down(w,p)\] 
by $w_i:=\down(w_{i-1},\sigma_i)$ for $i$ from $1$ to $m$.

For $z,w\in W$, define
\begin{align}
\label{E:uppaths}
  \Pu^\la_{w,z} &= \{\, p\in \Ta^\la \mid \text{$\phi(p)\ge zW_\la$ and
  $\up(z, p) = w$}\,\}\,,\\
\label{E:downpaths}
\Pd^\la_{w,z} &= \{\, p\in \Ta^\la \mid \text{$\iota(p)\le wW_\la$ and $\down(w,p)=z$}\,\}\,.
\end{align}

\begin{thm}\label{T:maintheorem} We have
\begin{align}
\label{comm2}
T_{w}\,e^{\la}
 &= \sum_{z\le w} \sum_{p\in\Pu^\la_{w,z}} e^{p(1)} \, T_z\,,\\
\label{comm1}
T_w \,e^{-\la} &= \sum_{\substack{p\in\Ta^\la \\ \iota(p)\le wW_\la}} 
(-1)^{\ell(w)-\ell(\down(w,p))} e^{-p(1)}\, T_{\down(w,p)}\,.
\end{align}
\end{thm}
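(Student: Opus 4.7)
The plan is to prove both \eqref{comm2} and \eqref{comm1} by induction on $\ell(w)$, using the nilHecke recurrences \eqref{E:bz}, \eqref{E:bsz} combined with the Littelmann crystal graph structure on LS paths and its interaction with the Deodhar lifts. Equivalently, writing $c^w_{z\la}=\sum_{p\in\Pu^\la_{w,z}}e^{p(1)}$, it suffices to show $c^w_{z\la}=b^w_{z\la}$ for all $z,w$, and similarly for the antidominant version.

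The base case $w=e$ is immediate: $T_e$ is the identity, and the only LS path contributing has a single direction equal to the coset $eW_\la$, giving $\up(e,p)=e=\down(e,p)$ and $p(1)=\la$, so the right-hand side collapses to $e^{\pm\la}T_e$.

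For the inductive step of \eqref{comm2}, choose $i\in I$ with $s_iw<w$ and set $w'=s_iw$. Applying $T_i$ to the identity for $w'$ and using \eqref{E:commute}, then taking coefficients of each $T_z$ via $T_iT_{z'}=T_{s_iz'}$ when $s_iz'>z'$ and $T_iT_{z'}=-T_{z'}$ otherwise, the inductive step reduces to proving, for $z<s_iz$, the combinatorial identities
\begin{align*}
c^w_{z\la} &= T_i\cdot c^{w'}_{z\la}, \\
c^w_{s_iz,\la} &= T_i\cdot c^{w'}_{s_iz,\la} + s_i\bigl(c^{w'}_{z\la}-c^{w'}_{s_iz,\la}\bigr).
\end{align*}
These will be established by partitioning $\Pu^\la_{w',\cdot}$ and $\Pu^\la_{w,\cdot}$ into $i$-strings under the Littelmann operators $\tilde e_i,\tilde f_i$, and showing that along each such string the endpoint generating function $\sum_k e^{p_k(1)}$ matches, up to a boundary term, the telescoping formula \eqref{E:TonE} for $T_i\cdot e^\mu$. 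The boundary term is precisely what must be distributed via the Bruhat covering structure between the lifts ending at $z$ and $s_iz$, producing the $s_i$-twisted correction.

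The principal technical obstacle, flagged in the introduction as the source of the gap in \cite{PR,GR}, is the compatibility between the crystal operators on LS paths and the Deodhar lift $\up$: one must track how the initial and final directions $\iota(p),\phi(p)\in W/W_\la$ transform under $\tilde e_i,\tilde f_i$, and how these local changes propagate through the recursive definition $w_i=\up(w_{i+1},\sigma_i)$. In \cite{PR,GR} this compatibility was tacit and is valid there only in the regular case, where $W_\la$ is trivial and Deodhar lifts are identity maps; the systematic treatment for arbitrary $W_\la$ forms the technical core of the paper. The inductive step of \eqref{comm1} is entirely parallel, with $\down$ replacing $\up$; the factor $(-1)^{\ell(w)-\ell(\down(w,p))}$ accumulates one sign at each recursive step where $T_iT_v=-T_v$ applies, exactly matching the length drop between $w$ and $\down(w,p)$.
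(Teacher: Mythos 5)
Your proposal follows essentially the same strategy as the paper: reduce \eqref{comm2} and \eqref{comm1} to verifying that the LS-path generating functions satisfy the nilHecke recurrences \eqref{E:bz} and \eqref{E:bsz}, decompose $\Ta^\la$ into $i$-strings, and analyze the interaction of crystal operators with Deodhar lifts along each string (which the paper formalizes in Proposition~\ref{P:pathlift} and its eight-case chart \eqref{E:chart}, proven via Lemmas~\ref{L:lowlift}--\ref{L:endstring}). You correctly identify the string-by-string bookkeeping of $\up$ against $\tilde e_i,\tilde f_i$ as the technical core, which is exactly where the paper invests its effort and where the gap in \cite{PR,GR} lay.
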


\begin{rem}\label{R:grerr} Formula \eqref{comm2} corrects the second formula in
\cite[Theorem 3.5]{GR}. To make the translation to compare the formulas, one must use the
algebra involution on the $K$-theoretic nilHecke ring given by 
$e^\la\mapsto e^{-\la}$ (as left multiplication operators) and $T_i\mapsto -1-T_i$ (or, equivalently, $D_i\mapsto 1-D_i$). 
\end{rem}

By Lemma \ref{L:consteq} we obtain the following result.

\begin{cor} \label{C:Chevalley} We have
\begin{align}
\label{chdom}
[L^\la]\,[\OO_z] &= \sum_{\substack{p\in \Ta^\la \\ \phi(p) \ge zW_\la}} e^{p(1)} [\OO_{\up(z,p)}]\,,\\
\label{chantidom}
[L^{-\la}]\,[\OO_z] &= \sum_{w\ge z} \sum_{p\in\Pd^\la_{w,z}} (-1)^{\ell(w)-\ell(z)}\, e^{-p(1)}\, [\OO_w]\,.
\end{align}
\end{cor}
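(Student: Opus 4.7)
The plan is to deduce the corollary directly from Theorem \ref{T:maintheorem} by unpacking the definition \eqref{E:constcomm} of the nilHecke coefficients $b^w_{v\la}$, applying Lemma \ref{L:consteq} to pass to the Chevalley coefficients $a^w_{v\la}$, and then substituting into \eqref{E:constprod}. The only real content beyond bookkeeping is a reorganization of the sums indexed by LS paths.

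First I would handle the dominant case \eqref{chdom}. Comparing \eqref{comm2} with \eqref{E:constcomm}, and reading off the coefficient of $T_z$, I get
\begin{align*}
b^w_{z\la} \;=\; \sum_{p\in \Pu^\la_{w,z}} e^{p(1)}\,,
\end{align*}
where this sum is zero unless $z\le w$ and $\phi(p)\ge zW_\la$, in which case $\up(z,p)$ is well-defined by Proposition \ref{P:Deodhar}(1). Applying Lemma \ref{L:consteq} and substituting into \eqref{E:constprod} with $v$ replaced by $z$, I obtain
\begin{align*}
[L^\la]\,[\OO_z] \;=\; \sum_{w\in W}\sum_{p\in \Pu^\la_{w,z}} e^{p(1)}\,[\OO_w]\,.
\end{align*}
By the definition \eqref{E:uppaths}, as $w$ ranges over $W$ the sets $\Pu^\la_{w,z}$ partition the collection $\{p\in \Ta^\la\mid \phi(p)\ge zW_\la\}$, the partition being by the value $w=\up(z,p)$. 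Re-indexing the double sum as a single sum over the partitioned set yields \eqref{chdom}.

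For the antidominant case \eqref{chantidom}, I would apply \eqref{comm1} and read off the coefficient of $T_z$ in $T_w e^{-\la}$, grouping the LS paths $p$ with $\iota(p)\le wW_\la$ by the value of $\down(w,p)$. By \eqref{E:downpaths}, this gives
\begin{align*}
b^w_{z,-\la} \;=\; (-1)^{\ell(w)-\ell(z)} \sum_{p\in \Pd^\la_{w,z}} e^{-p(1)}\,,
\end{align*}
where the sum is empty unless $z\le w$, since by Proposition \ref{P:Deodhar}(2) we have $\down(w,p)\le w$. Applying Lemma \ref{L:consteq} and substituting into \eqref{E:constprod} with $\la$ replaced by $-\la$ and $v$ by $z$ produces \eqref{chantidom} directly, with the sum over $w$ automatically restricted to $w\ge z$.

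The main obstacle is not in this derivation itself, which is a straightforward repackaging, but rather in Theorem \ref{T:maintheorem}, whose proof (per the authors' discussion of the gap in \cite{PR,GR}) requires the delicate analysis of the interaction between Deodhar lifts and the LS path structure. Once that theorem is granted, the corollary follows by the two short arguments above.
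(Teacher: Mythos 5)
Your derivation is correct and is precisely what the paper does: the paper's own ``proof'' consists of the single sentence preceding the corollary, ``By Lemma \ref{L:consteq} we obtain the following result,'' and your proposal simply spells out that bookkeeping — read off the coefficient of $T_z$ in \eqref{comm2} and \eqref{comm1}, identify $b^w_{z,\pm\la}$ with $a^w_{z,\pm\la}$ via Lemma \ref{L:consteq}, substitute into \eqref{E:constprod}, and reorganize the sum over $w$ as a sum over LS paths via the partition by $\up(z,p)$ (respectively restrict to $w\ge z$ via $\down(w,p)\le w$). No gap; same approach.
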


\begin{rem}\label{R:antidominant} We have described the antidominant formula \eqref{comm1} using the
combinatorics of the highest weight crystal $\Ta^\la$ with $\la$ dominant. Conceptually it is
better to use the lowest weight crystal $\Ta^{-\la}$; for infinite-dimensional Lie algebras
these are not highest weight crystals, as they are in the finite-dimensional setting.
The natural parametrization of the directions in an LS path in $\Ta^{-\la}$,
gives a sequence that is increasing in the Bruhat order. 
We find it convenient to work with the more familiar 
objects in $\Ta^\la$. This is achieved by applying the crystal antiautomorphism $\Ta^{-\la}\to\Ta^\la$,
which is defined by reversing the sequence of vectors and negating them, that is,
sending the sequence of vectors $a_1v_1,\dotsc,a_mv_m$ to $-a_mv_m,\dotsc,-a_1v_1$.
This contragredient duality map has the effect of negating weights and
reversing colored arrows. We do not see a way to
deduce the antidominant formula from the dominant one.
\end{rem}

Proposition \ref{P:pathlift} below is the crucial result needed to prove relation \eqref{comm2}. To state it,
recall that $\Ta^\la$ is a model for the {\em crystal graph} of the irreducible $U_q(\geh)$-module 
of highest weight $\lambda$ where $U_q(\geh)$ is the quantum group. As such, $\Ta^\la$ is a graph
with vertex set $\Ta^\la$ and directed edges colored by 
the Dynkin node set $I$. The connected components of the restriction of the graph to arrows labeled $i$ for a fixed $i$,
are finite directed paths called \emph{$i$-strings}. 
For more details on the crystal graph structure on LS paths we refer to Section \ref{lslifts}.

Fix an $i$-string $\St$ in $\Ta^\la$. Let $h$ be the head or source of the string $\St$, $t$ its tail,
and $m=\St\setminus\{h,t\}$ its middle. Provisionally, for all $w,z\in W$, we write
$\Pu_{w,z}(\St)=\Pu^\la_{w,z} \cap \St$ and $s=s_i$. 
\begin{prop}\label{P:pathlift} Let $z\in W$ be such that $s z > z$ and $zW_\la \le \phi(h)$ and write $w=\up(z,h)$. 
Then $s w>w$ and
\begin{equation}\label{lift0}
\text{$\{\,\up(z,p) \mid p\in\St\,\}\subseteq\{w,sw\}$ and $\phi(p) \ge zW_\la$ for $p\in\St$}
\end{equation}
In terms of $\up(sz,\,\cdot\,)$ on $\St$, we have the following three disjoint cases:
\begin{align}
\label{lift1}
&\text{$\phi(p) \ge szW_\la$ for $p\in\St$ and $\{\up(sz,p) \mid p\in\St\,\}\subseteq\{w,sw\}$} \\
\label{lift2}
&\text{$\phi(t) \ge szW_\la$, $\up(sz,t)\in\{w,sw\}$, and $\phi(p) \not\ge szW_\la$ for $p\in\St\setminus\{t\}$} \\
\label{lift3}
&\text{$\phi(p) \ge szW_\la$ for $p\in\St$, $\up(sz,t)\in\{w,sw\}$, and $\{\up(sz,p) \mid
p\in\St\setminus\{t\}\,\}\subseteq\{\tw,s\tw\}$} 
\end{align}
where $\tw\notin \{w,sw\}$ and $s\tw >\tw$. More precisely, the following chart
gives the pairwise disjoint possibilities in terms of arbitrary $x$ with $sx>x$.
\begin{align}\label{E:chart}
\begin{array}{|c||c|c|c||c|c|c|||c|c||} \hline
\Pu_{x,z}(\St) &    \multicolumn{3}{c||}{\St} & \multicolumn{3}{|c|||}{ h } &\multicolumn{2}{|c||}{ \emptyset } \\ \hline 
\Pu_{s x,z}(\St) &    \multicolumn{3}{c||}{\emptyset} & \multicolumn{3}{|c|||}{ \St\setminus h } 
&\multicolumn{2}{|c||}{ \emptyset } \\ \hline\hline
\Pu_{x,sz}(\St) & t & \St & \{h,t\}& \emptyset  & h& \St\setminus t &\St\setminus t&h \\ \hline 
\Pu_{sx,sz}(\St)&\emptyset &\emptyset & m & t & \St\setminus h & t& \emptyset& m  \\ \hline\hline
|\St| & {\ge 1} & {\ge 2} & {\ge 3} & { \ge 1 }& {\ge 2}& {\ge 3} & {\ge 2 }&{\ge3} \\ \hline \hline
 \mathrm{Cases} & \UU.1.1 &\UU.1.2&\UU.1.3 & \UU.2.1 &\UU.2.2 & \UU.2.3 & {\rm \UU.3.1} & {\rm \UU.3.2} \\ \hline
\end{array}
\end{align}
Cases {\rm U}.a.b for $a\in\{1,2\}$ and $b\in\{2,3\}$ correspond to {\rm \eqref{lift1}} when $x=w$. 
Cases {\rm U.1.1} and {\rm U.2.1} correspond to \eqref{lift2} and \eqref{lift3} when $x=w$. 
Cases {\rm U.3.*} correspond to \eqref{lift3} when $x=\tw$.
\end{prop}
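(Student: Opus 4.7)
My plan is to combine the combinatorial description of $i$-strings in $\Ta^\la$ with the lifting properties of the Bruhat order on $W/W_\la$ to track how the Deodhar lifts $\up(z,\cdot)$ and $\up(sz,\cdot)$ evolve along the string $\St$.

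First I would verify that $sw>w$, where $w=\up(z,h)$. Suppose for contradiction that $sw<w$. The Bruhat Lifting Property, applied to $w\ge z$ together with $sw<w$ and $sz>z$, forces $sw\ge z$. If $sw$ lies in the coset $\phi(h)W_\la$ (equivalently, $w^{-1}sw\in W_\la$), this contradicts the minimality of $w$ as the element $\up(z,\phi(h))$. The remaining case, where $sw$ lies in a different coset, must be ruled out using $zW_\la\le\phi(h)$: comparing $swW_\la$ and $\phi(h)$ in $W/W_\la$ via Proposition~\ref{P:Deodhar} yields another contradiction with the way the minimum lift is chosen in $\phi(h)$.

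Next I would prove that $\up(z,p)\in\{w,sw\}$ and $\phi(p)\ge zW_\la$ for every $p\in\St$, by induction on the distance of $p$ from $h$ inside the string. Consecutive paths in an $i$-string differ by a single local modification: some direction $\sigma_k$ is replaced by $s\sigma_k$, possibly accompanied by a split or merge of adjacent segments. The recursive definition of $\up$, processing the direction sequence from right to left, propagates this local change through one step; the Lifting Property then implies that the modification changes the resulting lift by at most a factor of $s$, keeping it inside $\{w,sw\}$. The same reasoning applied at the end of the direction sequence gives $\phi(p)\ge zW_\la$.

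The chart would then be obtained by systematically examining where along the direction sequence of $h$ the $s$-condition first becomes active, and whether the endpoint condition $\phi(p)\ge szW_\la$ is preserved or violated along the string. I would partition $\St$ into head, middle, and tail exactly as the chart does, and verify each entry by combining the previous steps with the explicit effect of $s$ on the relevant directions. The main obstacle is the combinatorial bookkeeping required to distinguish cases U.1.$*$, U.2.$*$, and U.3.$*$, since they encode subtle Bruhat-theoretic conditions among $zW_\la$, $szW_\la$, and the directions appearing in $h$ and in its descendants under $f_i$; in particular, the appearance of a new Weyl element $\tw\notin\{w,sw\}$ in cases U.3.$*$ requires identifying exactly when passing from $z$ to $sz$ promotes a lift into a coset outside $\phi(h)W_\la$. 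The $b$-Bruhat order description of LS paths alluded to in the introduction appears to be the right framework, since it records precisely the lifting inequalities that drive this analysis.
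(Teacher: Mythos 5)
Your proposal captures the right high-level picture — track how the lifts $\up(z,p)$ and $\up(sz,p)$ evolve as $p$ moves along the $i$-string, and decompose the string into head/middle/tail — which is also the paper's strategy. But there are concrete gaps.

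First, the argument for $sw>w$ is broken. You claim that if $sw<w$ then $sw$ lying ``in the coset $\phi(h)W_\la$'' contradicts the minimality of $w$ ``as the element $\up(z,\phi(h))$.'' But $w=\up(z,h)$ is not a single Deodhar lift to $\phi(h)W_\la$: it is the \emph{iterated} lift through the entire direction sequence $\iota(h)=\sigma_1\ge\cdots\ge\sigma_m=\phi(h)$ of $h$, so $wW_\la=\iota(h)$, not $\phi(h)$, and $w$ is not the Bruhat-minimum of any single coset-intersected up-set. The paper's Lemma~\ref{L:headlift} instead argues along the chain $w_0=z,\,w_k=\up(w_{k-1},\sigma_k)$, locating the \emph{last} index $j$ with $\sigma_j\neq s\sigma_j$; the crucial input is the signature rule, which forces $\sigma_j<s\sigma_j$ precisely because $h$ is the head of its $i$-string, after which Lemma~\ref{L:lowlift} propagates $w_j<sw_j$ through the remaining $s$-fixed steps. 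Your sketch never invokes the signature rule at this point, and without it there is no way to rule out $sw<w$.

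Second, the claim that crossing one edge $p\mapsto f_i(p)$ ``changes the resulting lift by at most a factor of $s$, keeping it inside $\{w,sw\}$'' is true but substantially understated. The paper's Lemma~\ref{L:edgelift} shows more: $\up(z,f_ip)\in\{\up(z,p),\,s\cdot\up(z,p)\}$, \emph{and} the jump by $s$ can occur only when $p=h$. That refined statement, deduced from Lemmas~\ref{L:liftleft} and \ref{L:liftright} (which control the Deodhar lift when $\tau$ or $v$ is multiplied by $s$), is exactly what makes the middle of the string constant (Lemma~\ref{L:Pmiddle}) and is indispensable for filling in the chart. Your proposal does not isolate these two lemmas or the $p=h$ restriction, so it cannot distinguish, say, U.1.1 from U.1.2/U.1.3 or identify when the new element $\tw$ appears (cases U.3.$*$).

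Third, the $b$-Bruhat order is not the right tool here. It enters only in Section~\ref{b-bruhat}, in the proof of the \emph{alcove-model} Chevalley formula via the non-recursive characterization of LS paths. The proof of Proposition~\ref{P:pathlift} is entirely crystal-theoretic: it relies on the signature rule (Proposition~\ref{P:infin}), the coset Z-property (Lemma~\ref{L:Bruhatcoset}), and the Deodhar-lift lemmas \ref{L:lowlift}--\ref{L:liftright}, not on $b$-Bruhat intervals. Importing the $b$-Bruhat framework at this stage would be circular relative to the paper's logical flow.

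In short, the overall strategy is aligned with the paper's, but you would need to (i) correct the $sw>w$ argument by working through the iterated lift and the signature rule, (ii) pin down that edge jumps happen only at $h$ via specific Deodhar-lift lemmas, and (iii) drop the $b$-Bruhat framework in favor of the crystal/signature-rule analysis.
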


Given Proposition \ref{P:pathlift} one may deduce the dominant weight LS-path Chevalley rule \eqref{comm2}.

\begin{proof}[Proof of relation \eqref{comm2}]
Given a set of paths ${\mathcal P}$, let
\[\Sigma({\mathcal P}):=\sum_{p\in{\mathcal P}} e^{p(1)}\,.\]
Let $x, z\in W$ be such that $sx>x$ and $sz>z$.
It suffices to show that the coefficients in \eqref{comm2} satisfy the recurrence relations 
\eqref{E:bz} and \eqref{E:bsz} for the Chevalley coefficients, namely,
\begin{align*}
\Sigma(\Pu_{sx,z})=T_i\cdot \Sigma(\Pu_{x,z})\qquad\Sigma(\Pu_{sx,sz})=
T_i\cdot \Sigma(\Pu_{x,sz})+s\left(\Sigma(\Pu_{x,z})-\Sigma(\Pu_{x,sz})\right).
\end{align*}
Since $\Ta^\la$ is partitioned into $i$-strings, it suffices to establish these relations
for every $i$-string $\St$ with $\Pu_{u,v}$ replaced by $\Pu_{u,v}(\St)$ for all $u,v$.
In every column of the above table, it is easy to verify these relations. For instance, in Case U.1.1, 
the first relation $T_i\cdot\Sigma(\St)=0$ follows by acting with $T_i$ on
\[\Sigma(\St)=T_i\cdot e^{h(1)}+e^{h(1)}\,,\]
which is \eqref{E:TonE}. The second relation, in the same case, amounts to
\[T_i\cdot e^{s\,h(1)}+\Sigma(\St)-e^{h(1)}=0\,,\]
which is also \eqref{E:TonE}. All the other relations follow in a similar way.
\end{proof}

The proof of Proposition \ref{P:pathlift} occupies the next several subsections.

\begin{rem} If $\lambda$ is regular (that is, $W_\lambda=\{1\}$) then the Deodhar lifts are trivial. For such $\lambda$,
by Proposition \ref{P:infin} only cases U.$a.b$ for $a,b\in\{1,2\}$ can arise.
\end{rem}

\subsection{The Bruhat order}\label{sbruhat}

We recall some basic properties of the Bruhat order on a Coxeter group, beginning with the well-known {\em Z-property}.

\begin{prop}\cite{Hum} \label{P:Bruhat} 
Let $sv<v$ and $sw<w$. Then $v\le w$ if and only if $sv \le w$ if and only if $sv\le sw$. 
\end{prop}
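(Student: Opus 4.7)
The plan is to reduce this chain of equivalences to the subword characterization of the Bruhat order as developed in \cite{Hum}: $u\le u'$ if and only if some reduced expression for $u'$ admits a reduced subword with product $u$. Two of the three implications are immediate from transitivity together with the covers $sv\lessdot v$ and $sw\lessdot w$: the hypothesis $sv<v$ gives $v\le w\Rightarrow sv\le w$, and the hypothesis $sw<w$ gives $sv\le sw\Rightarrow sv\le w$. What remains is to prove $sv\le sw\Rightarrow v\le w$ and $sv\le w\Rightarrow sv\le sw$; together these amount to the standard Lifting Property of Deodhar.

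For both non-trivial implications I would fix a reduced expression $s\cdot s_{i_2}\cdots s_{i_n}$ for $w$ beginning with $s$; such an expression exists precisely because $sw<w$, and then $s_{i_2}\cdots s_{i_n}$ is automatically a reduced expression for $sw$. To prove $sv\le sw\Rightarrow v\le w$, I would take a reduced subword of $s_{i_2}\cdots s_{i_n}$ with product $sv$ and prepend the leading $s$; the resulting subword of the chosen expression for $w$ has length $\ell(sv)+1=\ell(v)$ and product $s\cdot sv=v$, so is itself reduced and witnesses $v\le w$. To prove $sv\le w\Rightarrow sv\le sw$, I would argue that no reduced subword of $s\cdot s_{i_2}\cdots s_{i_n}$ with product $sv$ can use the leading $s$: if it did, the remainder would be a word of length $\ell(sv)-1$ whose product is $s\cdot sv=v$, contradicting $\ell(v)=\ell(sv)+1$. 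Consequently, any such subword sits inside $s_{i_2}\cdots s_{i_n}$, directly giving $sv\le sw$.

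The main technical input is the subword characterization of Bruhat order itself; once that is available, the remaining argument is a short length bookkeeping, and I anticipate no real obstacle beyond making a careful choice of the reduced expression for $w$ and tracking which letters a given subword does or does not use. The argument is essentially the classical proof of Deodhar's Lifting Property as presented in Humphreys.
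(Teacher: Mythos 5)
Your argument is correct: the two easy implications follow from transitivity, and the two nontrivial ones are handled by choosing a reduced expression for $w$ beginning with $s$ and doing length bookkeeping via the subword characterization of Bruhat order. The paper gives no proof of its own and simply cites Humphreys, where this ``Z-property'' (Lifting Property) is established by precisely this kind of reduced-word/subword argument, so your proposal matches the intended reference.
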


\begin{prop} \label{P:coset} \cite{D} {\rm (1)} We have $W^J=\{w\in W\mid \text{$ws > w$ for all $s\in S\cap W_J$} \}\,.$\\
\indent {\rm (2)} Given $w\in W$ there exist unique $w^J\in W^J$ and $w_J\in W_J$ such that
$w=w^Jw_J$. Moreover $\ell(w)=\ell(w^J)+\ell(w_J)$. \\
\indent {\rm (3)} Given $w\in W^J$ and $s\in S$, exactly one of the following occurs:\\
\indent\indent {\rm (a)} $sw < w$. Then $sw\in W^J$ and $swW_J<wW_J$.\\
\indent\indent {\rm (b)} $sw > w$ and $sw \in W^J$. Then $swW_J>wW_J$.\\
\indent\indent {\rm (c)} $sw > w$ and $sw\not\in W^J$. Then $sw=ws'$ for some $s'\in S\cap W_J$
and $swW_J=wW_J$.\\
\indent {\rm (4)} For $v^J,w^J\in W^J$ and $v_J,w_J\in W_J$,
if $v=v^Jv_J \le w=w^Jw_J$ then $v^J\le w^J$ and $vW_J\le wW_J$.
\end{prop}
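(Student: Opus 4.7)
The plan is to prove the four parts of Proposition \ref{P:coset} in order, each building on the previous; the argument follows Deodhar \cite{D} and is classical.

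For part (1), the forward inclusion is immediate: if $w$ is minimum-length in $wW_J$ and $s\in S\cap W_J$, then $ws\in wW_J\setminus\{w\}$ and $\ell(ws)\in\{\ell(w)\pm 1\}$, forcing $\ell(ws)>\ell(w)$. For the reverse inclusion, assume $\ell(ws)>\ell(w)$ for every $s\in S\cap W_J$; it suffices to show $\ell(wu)=\ell(w)+\ell(u)$ for every $u\in W_J$. I would induct on $\ell(u)$: writing $u=u's$ with $s\in S\cap W_J$ and $\ell(u')=\ell(u)-1$, apply the strong exchange condition to the concatenated reduced word $w\cdot u'\cdot s$. Any length-decreasing deletion inside the $u'$-portion contradicts reducedness of $u'$, while a deletion inside the $w$-portion produces a reflection $r\in W_J$ with $wr<w$; by the standard fact that an inversion of $w$ inside $\Phi_J^+$ propagates to some simple root of $\Phi_J$, this yields $s'\in S\cap W_J$ with $ws'<w$, contradicting the hypothesis.

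For part (2), take $w^J$ to be any minimum-length representative of $wW_J$ and set $w_J=(w^J)^{-1}w\in W_J$; the length additivity $\ell(w)=\ell(w^J)+\ell(w_J)$ is exactly what part (1) applied to $w^J$ provides. Uniqueness of $w^J$ (and hence of $w_J$) follows: if two minimum-length representatives of $wW_J$ existed, then their ratio, a non-identity element of $W_J$, would provide a length-preserving right multiplication ruled out by the length-additivity just established.

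For part (3), fix $w\in W^J$ and $s\in S$. In case (a), $sw<w$: to see $sw\in W^J$, suppose $(sw)s'<sw$ for some $s'\in S\cap W_J$; then $\ell(ws')=\ell(s\cdot sws')\le\ell(sws')+1\le\ell(w)-1$, contradicting $w\in W^J$ via part (1). The strict inequality $swW_J<wW_J$ holds since $sw\in W^J$ has length $\ell(w)-1$, placing $sw$ and $w$ in distinct cosets. Case (b) is immediate from the definition of Bruhat order on $W/W_J$ via the bijection $W^J\to W/W_J$. In case (c), $sw>w$ and $sw\notin W^J$, so some $s'\in S\cap W_J$ satisfies $(sw)s'<sw$, hence $\ell(sws')=\ell(w)$; take a reduced word $s_{i_1}\cdots s_{i_n}$ for $w$, so that $s\cdot s_{i_1}\cdots s_{i_n}$ is reduced for $sw$. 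The Exchange Lemma for right multiplication by $s'$ deletes one letter from this word; deleting any $s_{i_k}$ would realize $ws'=s\cdot sws'$ as a product of $n-1$ simple reflections, contradicting $\ell(ws')=n+1$ (which holds by part (1) since $s'\in S\cap W_J$). Hence the initial $s$ is deleted, giving $sws'=w$, i.e., $sw=ws'$, and in particular $swW_J=wW_J$.

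For part (4), the plan is induction on $\ell(w)$ combined with the Z-property of Proposition \ref{P:Bruhat}. The base case $w=e$ is trivial. For the inductive step, pick $s\in S$ with $sw<w$ and split according to whether $sv<v$ or $sv>v$. The Z-property reduces $v\le w$ to a Bruhat comparison of shorter pairs, either $(sv,sw)$ or $(v,sw)$, to which the inductive hypothesis applies, yielding $\le$ between the corresponding $W^J$-parts; part (3) then tracks how these $W^J$-parts relate back to $v^J$ and $w^J$. The main obstacle is case (c) of part (3), in which left multiplication by $s$ alters only the $W_J$-factor and leaves the coset unchanged; here the factorization $w=w^Jw_J$ does not update cleanly under $w\mapsto sw$, and a careful case-by-case match of the three possibilities of (3) for $v$ and $w$ is required to conclude $v^J\le w^J$, which by definition is equivalent to $vW_J\le wW_J$.
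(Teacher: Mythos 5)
The paper itself offers no proof of Proposition \ref{P:coset}: it is quoted from Deodhar \cite{D} as background, so your argument can only be judged on its own merits. Parts (1)--(3) of your proposal are correct and essentially complete: the exchange-condition arguments for (1) and for case (c) of (3), the length-additivity and uniqueness in (2), and the treatment of cases (a) and (b) of (3) all check out. (One cosmetic point: in (3)(a) the reason $sw$ and $w$ lie in distinct cosets is that they are distinct elements of $W^J$ and each coset contains a unique such element by (2), not the length drop as such.)

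Part (4), however, has a genuine gap, and it sits exactly at the point you flag but do not resolve. In your scheme (induction on $\ell(w)$, an arbitrary $s$ with $sw<w$, the Z-property, then part (3)), consider the sub-case $sv<v$ in which $v^J$ falls in case (a) of (3), so $(sv)^J=sv^J<v^J$, while $w^J$ falls in case (c), so $(sw)^J=w^J$ and $sw^J>w^J$. The inductive hypothesis applied to $sv\le sw$ yields only $sv^J\le w^J$, and from the data ``$sv^J\le w^J$, $sv^J<v^J$, $sw^J>w^J$'' one cannot deduce $v^J\le w^J$: already in type $A_2$, with $a=s_1$, $b=s_2$, $s=s_1$, one has $sa=e\le b$ and $sb>b$ but $a\not\le b$. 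So the unfinished ``case-by-case match'' is not bookkeeping; at that point your induction does not close without a further idea. Two standard repairs: (i) when $w^J\ne e$, choose $s$ to be a left descent of $w^J$ (any such $s$ is a left descent of $w$), which makes case (c) for $w$ impossible, and then all sub-cases follow from Proposition \ref{P:Bruhat} and the lifting property; if $w^J=e$ then $w\in W_J$, and $v\le w$ forces $v\in W_J$ by the subword property, so $v^J=w^J=e$. Or (ii) observe $v^J\le v\le w$, so one may assume $v\in W^J$, and then induct by stripping right descents of $w$ lying in $W_J$: if $s'\in S\cap W_J$ with $ws'<w$, then $vs'>v$ and the standard lifting property (cf.\ \cite{Hum}) gives $v\le ws'$, with $(ws')^J=w^J$; iterate until $w\in W^J$. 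Either route finishes (4) in a few lines, but as written your proof of (4) is incomplete.
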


\begin{lem} \label{L:Bruhatcoset} Consider $\sigma, \,\tau$ in $W/W_J$ and $s\in S$. 
Suppose $s\sigma\le \sigma$ and $s\tau\le\tau$. 
Then $\sigma\le\tau$ if and only if $s\sigma\le\tau$ if and only if $s\sigma\le s\tau$. 
\end{lem}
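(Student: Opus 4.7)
The plan is to lift to minimum-length coset representatives. Write $\sigma = vW_J$ and $\tau = wW_J$ with $v,w\in W^J$, so that (by Proposition \ref{P:coset}(4) together with the fact that the order on $W/W_J$ is transported from $W^J$) comparisons in $W/W_J$ coincide with Bruhat comparisons of the $W^J$-representatives. By Proposition \ref{P:coset}(3), the hypothesis $s\sigma\le\sigma$ excludes case (b), leaving either case (a) ($sv<v$, with $sv\in W^J$ representing $s\sigma$) or case (c) ($sv = vs''$ for some $s''\in S\cap W_J$, with $s\sigma=\sigma$). The same dichotomy holds for $w$, yielding four subcases labeled (a,a), (a,c), (c,a), (c,c).

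In subcase (a,a), both $sv<v$ and $sw<w$ with $sv,sw\in W^J$ representing $s\sigma$ and $s\tau$; the three equivalences follow immediately from Proposition \ref{P:Bruhat} applied to $v,w$. In subcase (c,c), $s\sigma=\sigma$ and $s\tau=\tau$, so all three statements of the lemma coincide trivially.

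The substantive work lies in the asymmetric subcases. In (a,c), $s\tau=\tau$ renders $s\sigma\le\tau \iff s\sigma\le s\tau$ automatic; to prove $\sigma\le\tau \iff s\sigma\le\tau$, I would set $w':=sw$, note that $sw'=w<w'$, and apply Proposition \ref{P:Bruhat} to $v,w'$ (using $sv<v$) to obtain $sv\le w \iff v\le sw$. Since $sw=ws'$ with $s'\in S\cap W_J$ and $\ell(sw)=\ell(w)+1$, the unique decomposition of $sw$ has $(sw)^J=w$; then Proposition \ref{P:coset}(4) gives $v\le sw\Rightarrow v=v^J\le(sw)^J=w$, while $v\le w\Rightarrow v\le sw$ is immediate from $w\le sw$. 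Subcase (c,a) is handled symmetrically by exchanging the roles of $v$ and $w$ (replacing $v$ by $sv$ in the Z-property).

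The main obstacle is conceptual rather than technical: in the asymmetric subcases Proposition \ref{P:Bruhat} cannot be applied directly to $(v,w)$, because $s$ acts ``on the wrong side'' for one of them (landing outside $W^J$). The workaround is to apply the Z-property on $W$ to the reflected element $sv$ or $sw$, and then use Proposition \ref{P:coset}(4) to pass between Bruhat comparisons in $W$ and in $W^J$ via the coset-length decomposition.
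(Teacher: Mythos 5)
Your proof is correct and follows essentially the same route as the paper's: lift to $W^J$-representatives, split into cases via Proposition \ref{P:coset}(3), apply the Z-property (Proposition \ref{P:Bruhat}) in the symmetric case, and in the asymmetric case where $s$ leaves $W^J$ pass through the length-additive decomposition $sw=ws'$ and Proposition \ref{P:coset}(4). The only cosmetic difference is that you spell out the trivial subcase (c,c) and describe (c,a) as ``symmetric'' to (a,c), whereas it is actually a bit easier — applying the Z-property to $(sv,w)$ gives $v\le w\iff v\le sw$ directly without invoking Proposition \ref{P:coset}(4) — but your argument still goes through.
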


\begin{proof}
Let $v,w\in W^J$ be such that $vW_J=\sigma$ and $wW_J=\tau$. By Proposition \ref{P:coset} (3), 
if $s\sigma<\sigma$ then $s v<v$ and $sv\in W^J$, while if $s\tau<\tau$ then $sw<w$ and $sw\in W^J$. 
If both of these cases hold, the result is immediate by Proposition \ref{P:Bruhat}. 
If $s\sigma=\sigma$ and $s\tau<\tau$, then $sv>v$, by Proposition \ref{P:coset} (3). 
It suffices to show that $\sigma\le \tau$ implies $\sigma\le s\tau$, i.e., $v\le w$ implies $v\le sw$; 
but this follows from Proposition \ref{P:Bruhat}. Finally, assume that $s\sigma<\sigma$ and $s\tau=\tau$. 
Like above, by Proposition \ref{P:coset} (3), we have $w<sw=ws'$ for $s'\in W_J$. It now suffices to 
show that $s\sigma\le\tau$ implies $\sigma\le\tau$; but this follows since $sv\le w$ implies $v\le sw=ws'$, 
by Proposition \ref{P:Bruhat}, which in turn implies $v\le w$, by Proposition \ref{P:coset} (4). 
\end{proof}

\subsection{Deodhar lifts}\label{deolifts}

The setup is still that of Coxeter groups. Recall from Proposition \ref{P:Deodhar} (2) the definition of the Deodhar lifts $\mn(v,\tau)$.

\begin{lem} \label{L:lowlift} Let $v\in W$ and $\tau\in W/W_J$ be such that $v W_J \le \tau$.
Let $s\in S$. If $s\tau>\tau$ (resp. $s\tau<\tau$), then $sw>w$ (resp. $sw<w$). If $\tau=s\tau$ and $sv>v$, then $sw>w$.
\end{lem}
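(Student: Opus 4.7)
The plan is to combine the parabolic decomposition $w=w^Jw_J$ of $w=\up(v,\tau)$ (via Proposition~\ref{P:coset}(2), with $w^J\in W^J$ the minimum-length representative of $\tau$) with the trichotomy of Proposition~\ref{P:coset}(3) applied to the pair $(w^J,s)$, and to invoke the minimality in Proposition~\ref{P:Deodhar}(1) only for the third claim. The three cases (a), (b), (c) of Proposition~\ref{P:coset}(3) correspond bijectively to $s\tau<\tau$, $s\tau>\tau$, and $s\tau=\tau$: cases (a) and (b) give $sw^J\in W^J$, so that $sw^J$ is itself the minimum-length representative of $s\tau$; while in case (c) one has $sw^J=w^Js'$ with $s'\in S\cap W_J$, which forces $sw^JW_J=\tau$.

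For the first two claims the argument is then direct and does not use $v$ at all. Since $sw^J\in W^J$ in cases (a) and (b), the factorization $sw=(sw^J)w_J$ is length-additive by Proposition~\ref{P:coset}(2), so $\ell(sw)=\ell(sw^J)+\ell(w_J)=\ell(w)\pm 1$, with the sign matching the case. This yields $sw>w$ when $s\tau>\tau$ and $sw<w$ when $s\tau<\tau$, for \emph{any} element $w$ with $wW_J=\tau$, not just for $\up(v,\tau)$.

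The third claim is where the minimality of $w=\up(v,\tau)$ is essential. Assuming $s\tau=\tau$ and $sv>v$, I would argue by contradiction: suppose $sw<w$. Case (c) forces $(sw)W_J=s\tau=\tau$, so $sw$ already lies in the coset $\tau$; it then suffices to show $v\le sw$ in order to produce a strictly smaller candidate for $\up(v,\tau)$, contradicting minimality. To obtain $v\le sw$, I would apply the $Z$-property (Proposition~\ref{P:Bruhat}) with the substitution $v\mapsto sv$: since $s(sv)=v<sv$ and $sw<w$, the equivalence chain $sv\le w\Leftrightarrow v\le w\Leftrightarrow v\le sw$ holds, and $v\le w$ delivers the required $v\le sw$. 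The main obstacle is no more than correctly recognizing that the $Z$-property has to be invoked with $sv$ in place of $v$ so that a single application of Proposition~\ref{P:Bruhat} extracts the desired inequality $v\le sw$, rather than the weaker $sv\le sw$.
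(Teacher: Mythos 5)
Your proof is correct, and for the third claim it is exactly the paper's argument: suppose $sw<w$, apply the $Z$-property (Proposition~\ref{P:Bruhat}) to the pair $(sv,w)$ to extract $v\le sw$, and then $sw$ is a strictly smaller element of the coset $\tau$ lying above $v$, contradicting the minimality of $w=\up(v,\tau)$. For the first two claims the paper simply invokes Proposition~\ref{P:coset}(4) (if $sw<w$ then $(sw)W_J\le wW_J$, which is incompatible with $s\tau>\tau$, and dually), whereas you compute $\ell(sw)$ directly via the decomposition $w=w^Jw_J$, the trichotomy of Proposition~\ref{P:coset}(3), and length-additivity from Proposition~\ref{P:coset}(2). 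Both routes are valid; yours is slightly more explicit and correctly observes that those two claims hold for any $w$ in the coset $\tau$, not only for the Deodhar lift.
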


\begin{proof} Since $wW_J=\tau$, the first statement is clear by Proposition \ref{P:coset} (4). Now let us consider the case $\tau=s\tau$.
Suppose $sw<w$. Since $v\le w$ and $sv>v$, we have $v\le sw$, by Proposition \ref{P:Bruhat}. But $swW_J=s\tau=\tau$. By the minimality of $w$ we obtain the contradiction $w \le sw$.
Therefore $sw>w$, as required.
\end{proof}

\begin{lem} \label{L:liftleft} Suppose $v\in W$ and $\tau\in W/W_J$
are such that $sv>v$ and $vW_J \le s\tau<\tau$.
Letting $y=\mn(v,s\tau)$ and $w=\mn(v,\tau)$, we have 
$y=sw<w$.
\end{lem}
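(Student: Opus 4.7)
The plan is to pin down $y$ exactly as $sw$ by sandwiching: first showing $y \le sw$ from the Bruhat-minimality defining $y$, then showing $sy \le w$ via a Z-property argument, which combined with $w \le sy$ (from the minimality defining $w$) forces $sy = w$, i.e., $y = sw$. A separate step will yield $sw < w$, completing the statement.

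To start, I would apply Lemma \ref{L:lowlift} to $w = \up(v,\tau)$: since $s\tau < \tau$, this gives $sw < w$. To obtain $v \le sw$, I would invoke Proposition \ref{P:Bruhat} with the ``$v$-variable'' taken to be $sv$ and the ``$w$-variable'' taken to be $w$; the hypotheses $s(sv) = v < sv$ and $sw < w$ both hold, and the equivalence chain $sv \le w \Leftrightarrow v \le w \Leftrightarrow v \le sw$ applied to the known $v \le w$ yields $v \le sw$. Since also $swW_J = s\tau$, the element $sw$ lies in the set whose Bruhat-minimum is $y = \up(v, s\tau)$, so $y \le sw$.

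For the other direction, I would apply Lemma \ref{L:lowlift} a second time, now to $y = \up(v,s\tau)$ with coset $s\tau$: since $s(s\tau) = \tau > s\tau$, this gives $sy > y$. Then $syW_J = \tau$ together with $v \le y < sy$ place $sy$ in the set defining $w$, so $w \le sy$. A second application of Proposition \ref{P:Bruhat}, now with ``$v$-variable'' $sy$ and ``$w$-variable'' $w$ (the hypotheses $s(sy) = y < sy$ and $sw < w$ both being satisfied), yields $sy \le w \Leftrightarrow y \le w \Leftrightarrow y \le sw$, and the previously established $y \le sw$ forces $sy \le w$. Combining with $w \le sy$ gives $sy = w$, hence $y = sw$; together with the descent $sw < w$ from the first step, this completes the proof.

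The main obstacle will be keeping careful track of which of the elements $v, sv, y, sy, w, sw$ is raised or lowered by $s$, so that the hypotheses of Proposition \ref{P:Bruhat} can be verified in both applications; Lemma \ref{L:lowlift} is what bridges the coset-level data (the hypothesis $s\tau < \tau$) with the corresponding descents and ascents on the Weyl group lifts $w$ and $y$.
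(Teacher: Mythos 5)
Your proof is correct and follows essentially the same path as the paper's: both use Lemma \ref{L:lowlift} to establish $sw<w$ and $sy>y$, both invoke the minimality of $y$ and of $w$ to sandwich, and both close the gap with two applications of the Z-property (Proposition \ref{P:Bruhat}). The paper derives $sw\le y$ and $y\le sw$, whereas you derive $y\le sw$ and then $sy\le w$; these are equivalent bookkeeping choices within the same argument.
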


\begin{proof} By Lemma \ref{L:lowlift}, we have $sy>y$ and $w>sw$. 
Since $sy>y\ge v$ and $syW_J=wW_J$, by the minimality of $w$ we have $w\le sy$.
Then Proposition \ref{P:Bruhat} gives $sw\le y$.

On another hand, since $v\le w$, we have $v\le sw$, by Proposition \ref{P:Bruhat}. 
Since $swW_J=yW_J$, by the minimality of $y$ we have $y\le sw$. Therefore $y=sw$.
\end{proof}


\begin{lem} \label{L:liftright} Let $v\in W$, $s\in S$, and $\tau\in W/W_J$ be such that $v<sv$, $sv W_J \le \tau$,
and $s\tau\le \tau$. Let $y=\mn(sv, \tau)$ and $w=\mn(v,\tau)$. Then $y=w$ or $y=sw>w$, and the latter holds only if $s\tau=\tau$.
\end{lem}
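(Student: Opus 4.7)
The plan is to establish $y\ge w$ first, then determine when $y$ can differ from $w$ by splitting on the dichotomy $s\tau<\tau$ versus $s\tau=\tau$ afforded by the hypothesis $s\tau\le\tau$. The key tool throughout is Proposition \ref{P:Bruhat} (the Z-property), applied carefully: since it is stated only for descent pairs, we will invoke it by translating the relevant element via $s$ so as to create a descent.

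First I would note that the defining set for $y$ is contained in the defining set for $w$: any $u$ with $sv\le u$ satisfies $v\le u$ because $v<sv$, so in particular $u$ with $uW_J=\tau$ and $sv\le u$ is a candidate for $w$. Minimality of $w$ gives $y\ge w$. Now Lemma \ref{L:lowlift} applied to $(v,\tau)$ tells us that $sw<w$ if $s\tau<\tau$, and $sw>w$ if $s\tau=\tau$ (using $sv>v$ in the latter case).

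In the case $s\tau<\tau$, I would apply Proposition \ref{P:Bruhat} to the pair $(sv,w)$, both of which are $s$-descent elements (since $s(sv)=v<sv$ and $sw<w$): from $v\le w$ we get $sv\le w$. Thus $w$ is a candidate for $y$, forcing $y\le w$, hence $y=w$. In the case $s\tau=\tau$, we split on whether $sv\le w$. If yes, the same reasoning gives $y=w$. If no, I would apply Proposition \ref{P:Bruhat} to the pair $(sv,sw)$, again both $s$-descent elements: from $v\le w$ we conclude $sv\le sw$. Since $swW_J=s\tau=\tau$, this makes $sw$ a candidate for $y$, so $y\le sw$. Combined with $w\le y$ and the fact that only $w$ and $sw$ lie in the closed Bruhat interval $[w,sw]$, we obtain $y\in\{w,sw\}$; the subcase $sv\not\le w$ rules out $y=w$, giving $y=sw>w$.

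Finally, to confirm the parenthetical assertion, note that in the case $s\tau<\tau$ we have proved $y=w$, which is distinct from $sw$ (since $s$ is a simple reflection), so $y=sw>w$ can only occur when $s\tau=\tau$. The main subtlety — not really an obstacle but a point requiring care — is choosing the right element to feed into Proposition \ref{P:Bruhat} so that both of its entries are $s$-descents; once one looks for the right application, each deduction is a one-line invocation of the Z-property.
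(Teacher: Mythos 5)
Your proof is correct and follows essentially the same route as the paper: establish $w\le y$ from minimality, split on $s\tau<\tau$ versus $s\tau=\tau$, apply Lemma~\ref{L:lowlift} to determine whether $sw<w$ or $sw>w$, and feed the appropriate $s$-descent pairs into the Z-property to conclude. The only difference is that your subcase split on whether $sv\le w$ in the $s\tau=\tau$ case is not needed, since the lemma only asserts $y\in\{w,sw\}$, which already follows from $w\le y\le sw$ and $\ell(sw)=\ell(w)+1$.
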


\begin{proof} Since $v<sv\le y$ and $y W_J=w W_J$,
by the minimality of $w$ we have $w\le y$.
Suppose first that $s\tau<\tau$. By Lemma \ref{L:lowlift}, we have $sw<w$.
Since $v\le w$, we have $sv\le w$, by Proposition \ref{P:Bruhat}. The minimality of $y$ implies $y\le w$, so $y=w$.
Now consider the case $s\tau=\tau$. By Lemma \ref{L:lowlift}, we have $sw>w$.
Proposition \ref{P:Bruhat} gives $sv\le sw$.
We have $swW_J=s\tau=\tau$, so the minimality of $y$ implies $y\le sw$.
Thus we proved $w\le y\le sw$, as required.
\end{proof}

\begin{ex} If $v<sv$ but $\tau<s\tau$, then $y$ can be something completely different.
Take type $A_3$, $J=\{1\}$, $v=s_3$, $s=s_1$, and $\tau=s_2s_3 W_J$. Then $w=s_2s_3$
and $y=s_2s_3s_1$. 
\end{ex}

\subsection{LS paths and lifts}\label{lslifts} We revert to the setup of root systems for symmetrizable Kac-Moody algebras. 
For a detailed description of LS paths we refer to \cite{LS,St}. The set $\Ta^\la$ of LS paths of shape $\lambda$ can be characterized as the set generated by {crystal operators} $f_i$, starting from the straight-line path from $0$ to $\la$
\cite{Li2}. The non-recursive description of LS paths is given in Section \ref{ls-nonrec}. 

Let $i\in I$ be a fixed Dynkin node. Recall the decomposition of the crystal graph $\Ta^\la$ into $i$-strings.
The crystal operator $f_i$ (resp. $e_i$) on $\Ta^\la$ is the partial operator on $\Ta^\la$
(function mapping from a subset of $\Ta^\la$ into $\Ta^\la$) that sends a vertex $p\in \Ta^\la$
to the next (resp. previous) vertex on its $i$-string if that vertex exists, and is otherwise undefined
on $p$. We now recall the explicit definitions of $f_i$ and $e_i$ on $\Ta^\la$.

The element $p\in \Ta^\la$ is now viewed as a sequence of vectors.
Every vector $v$ in $p$, has $i$-height $\ip{\alpha_i^\vee}{v}$ given by an integer
$n$. The vector $v$, whose direction has the form $w\cdot \la$ for some 
$w\in W$, is cut into $|n|$ copies of the vector $\frac{1}{|n|} v$.
All vectors in $p$ are still pointing in some direction in $W\cdot\la$,
but now each has $i$-height in the set $\{-1,0,1\}$.
In the sequel, when discussing the crystal graph structure on $\Ta^\la$,
we will abuse notation by replacing each such step $v$ by its direction, i.e., the 
corresponding element of $W/W_\la \cong W\cdot \la$. The following rule (called the signature rule)
defines the actions of $f_i$ and $e_i$. Consider the sequence of steps
(indexed backwards)
$\phi(p)=\sigma_1 \le \sigma_2 \dotsm \le \sigma_\ell=\iota(p)$ in $p$.
The \emph{$i$-signature} of $p$ is by definition the word
$h_1h_2\dotsm h_\ell$, where $h_k\in \{-1,0,1\}$ is the $i$-height
of $\sigma_k$ for all $k$. We view each $-1$ as a left parenthesis, and each $+1$
as a right parenthesis; the $0$s are ignored. Pairing parentheses as usual,
the unpaired subsequence consists of some number of $+1$'s followed by some number of $-1$'s.
The \emph{signature rule} declares that $f_i(p)$ (resp. $e_i(p)$) is obtained from $p$
by taking the rightmost unpaired $+1$ (resp. leftmost unpaired $-1$), say $h_j$,
and replacing $\sigma_j$ by $s_i \sigma_j$; we have $s_i \sigma_j>\sigma_j$ (resp. $s_i\sigma_j<\sigma_j$).
If the mentioned $+1$ (resp. $-1$) does not exist, then $f_i(p)$ (resp. $e_i(p)$) is undefined.
Passing from $p$ to $f_i(p)$ (resp. $e_i(p)$) affects the $i$-signature by
changing the $j$-th symbol from $+1$ to $-1$ (resp. $-1$ to $+1$) and leaving other symbols unchanged.

Let $\St\subset \Ta^\la$ be a fixed $i$-string. It may be written
\begin{align}
\label{E:thestring}
\St=\{h=p_0,\,p_1,\,\ldots,\, p_m=t\}.
\end{align} where $h$ is the head, $t$ is the tail, and $p_k=f_i^k(h)$. Let 
\begin{equation}
\label{E:iotaphi} 
\phi_k:=\phi(p_k)\,,\;\;\;\;\;\;\;\;
\iota_k:=\iota(p_k)\,,
\end{equation}
for $0\le k\le m$. The following standard result is a consequence of the signature rule. 

\begin{prop}\label{P:infin} If $|\St|\ge 2$, we have 
\begin{align}
\label{incases}
\iota_0&=\iota_1=\ldots=\iota_m<s_i\iota_0&\quad&\text{or}&\quad \iota_0&<\iota_1=\ldots=\iota_m=s_i\iota_0 \\
\label{fincases}
s_i\phi_m&<\phi_0=\ldots=\phi_{m-1}=\phi_m&\quad&\text{or}&\quad s_i\phi_m&=\phi_0=\ldots=\phi_{m-1}<\phi_m\,.
\end{align}
If $|\St|=1$, we have $s_i\iota_0\ge \iota_0$ and $s_i\phi_0\le\phi_0$. 
\end{prop}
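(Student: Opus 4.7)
The plan is to prove the proposition directly from the signature rule for the crystal operators $f_i$ and $e_i$ on LS paths, which is recalled just before the statement. Writing the $i$-signature of a path $p$ as $h_1h_2\dotsm h_\ell$ with $\sigma_1=\phi(p)$ and $\sigma_\ell=\iota(p)$, the canonical matching of $-1$'s (left parens) with $+1$'s (right parens) produces an unpaired subsequence of $+1$'s followed by $-1$'s, and $f_i$ flips the rightmost unpaired $+1$ to a $-1$, replacing the direction $\sigma_j$ by $s_i\sigma_j$ at the corresponding position.

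First I would read off the structure of $\St$ from the signature of the head $h$. Because $e_i(h)$ is undefined, $h$'s signature has no unpaired $-1$, and its unpaired $+1$'s occupy some positions $j_1<j_2<\dotsm<j_m$ with $m=|\St|-1$. A straightforward induction shows that $p_k=f_i^k(h)$ is obtained from $h$ by flipping exactly positions $j_m,j_{m-1},\dotsc,j_{m-k+1}$, with the direction at each flipped position replaced by its $s_i$-image; in particular the unpaired subsequence of $p_k$ retains its canonical block form at every stage, so the next flip is again at the rightmost remaining position in $\{j_1,\dotsc,j_m\}$.

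With this description in hand, the analysis of $\iota_k$, the direction at position $\ell$ in $p_k$, splits on whether $\ell=j_m$. If $\ell=j_m$, then $h_\ell=+1$ so $s_i\iota_0>\iota_0$; applying $f_i$ once changes this direction to $s_i\iota_0$ and no later application touches it, producing $\iota_0<\iota_1=\dotsm=\iota_m=s_i\iota_0$, the second alternative. If $\ell\ne j_m$, position $\ell$ is never flipped, so $\iota_k=\iota_0$ throughout; since a $-1$ at the last position would automatically be unpaired, we must have $h_\ell\in\{0,+1\text{ (paired)}\}$, which gives the first alternative. An entirely symmetric argument at position $1$ handles $\phi$: if $1=j_1$ the leftmost unpaired $+1$ is flipped only at step $k=m$, yielding $\phi_0=\dotsm=\phi_{m-1}<\phi_m=s_i\phi_0$ and hence $s_i\phi_m=\phi_0<\phi_m$; otherwise position $1$ is untouched, and because any $+1$ at position $1$ is forced to be unpaired, $h_1\in\{0,-1\text{ (paired)}\}$, giving $s_i\phi_0\le\phi_0$.

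The case $|\St|=1$ is immediate from the same considerations: since $h=t$ has no unpaired entries, $h_\ell\in\{0,+1\text{ (paired)}\}$ and $h_1\in\{0,-1\text{ (paired)}\}$, yielding $s_i\iota_0\ge\iota_0$ and $s_i\phi_0\le\phi_0$. The main technical point is the inductive bookkeeping that each flip preserves the canonical block form of the unpaired subsequence, so that the $k$-th application of $f_i$ indeed hits position $j_{m-k+1}$; once this is verified, the case split on whether the extreme position $\ell$ (respectively $1$) belongs to $\{j_1,\dotsc,j_m\}$ reduces the proposition to reading off values of $h_\ell$ and $h_1$.
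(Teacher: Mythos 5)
Your approach — reading everything off the $i$-signature of the head $h$, using the fact that the canonical matching of parentheses is stable under $f_i$ so that each application of $f_i$ flips the next-rightmost unpaired $+1$ — is exactly the ``consequence of the signature rule'' argument that the paper alludes to but does not spell out; it is the right route. The inductive bookkeeping you sketch (that $p_k$ differs from $h$ by flipping positions $j_m,\dotsc,j_{m-k+1}$) is also correct, modulo the standard lemma that flipping the rightmost unpaired $+1$ preserves the matching, which you are right to flag as the one technical point.

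There is, however, a gap in the passage from ``$h_\ell\in\{0,+1\text{ (paired)}\}$'' to ``the first alternative.'' If $h_\ell=0$, then $s_i\iota_0=\iota_0$ (the cut step at position $\ell$ has $i$-height $0$, so its direction is fixed by $s_i$ in $W/W_\la$), and neither alternative of \eqref{incases} holds, since both assert $s_i\iota_0\ne\iota_0$. Your argument only rules out $h_\ell=-1$; it does not rule out $h_\ell=0$, and I do not see how to rule it out. In fact the case $h_\ell=0$ with $|\St|\ge 2$ does occur for non-regular $\la$: in type $B_2$ with $\al_1$ long and $\la=2\La_1$ (so $W_\la=\langle s_2\rangle$), the LS path $p$ with $\phi(p)=s_1W_\la$, $\iota(p)=s_1s_2s_1W_\la$, $b_2=\tfrac12$ is the head of its $2$-string with $|\St|=3$, yet $s_2\iota(p)=\iota(p)$ because $s_1s_2s_1W_\la$ is $s_2$-fixed; its cut $2$-signature is $(+1,+1,0)$. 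The same issue arises at position $1$ for \eqref{fincases} when $h_1=0$. What your argument delivers is $s_i\iota_0\ge\iota_0$ (resp.\ $s_i\phi_0\le\phi_0$) in the first alternatives, matching the weak form given for $|\St|=1$; the strict inequalities of \eqref{incases}--\eqref{fincases} appear to require $\la$ regular (where $0$-heights do not occur), and you should either restrict to that case or state the weak form and flag the discrepancy with the proposition.
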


We now study the way the lifts of LS paths change along the string $\St$ in \eqref{E:thestring}.

\begin{lem}\label{L:headlift} Let $z\in W$ be such that $zW_\la \le \phi(h)$, and set $u = \up(z,h)$. 
Assume that $|\St|\ge 2$ or $z<s_i z$. Then we have $u<s_i u$.
\end{lem}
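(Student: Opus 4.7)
My plan is to prove $s_i u > u$ by induction on the Deodhar-lift recursion defining $u = \up(z,h)$. Write the distinct directions of $h$ as $\iota(h) = \sigma_1 \ge \sigma_2 \ge \dots \ge \sigma_m = \phi(h)$, and set $w_{m+1} = z$, $w_k = \up(w_{k+1}, \sigma_k)$, so that $u = w_1$. The argument will track the Bruhat relation between $s_i w_k$ and $w_k$ as $k$ decreases from $m+1$ down to $1$.

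The step-by-step engine is Lemma \ref{L:lowlift}. I first record the standard correspondence between the $i$-height and the $s_i$-action on directions: for a direction $\sigma_k = w W_\la$ with $w \in W^J$, the height $n_k := \ip{\al_i^\vee}{\sigma_k}$ is strictly positive (resp.\ strictly negative, zero) exactly when $s_i \sigma_k > \sigma_k$ (resp.\ $<$, $=$) in $W/W_\la$. This follows from Proposition \ref{P:coset}(3) together with the observation that for $w \in W^J$ the root $w^{-1}\al_i$ lies in the $W_J$-root subsystem precisely in case (c). Consequently, at step $k$, Lemma \ref{L:lowlift} forces $s_i w_k > w_k$ unconditionally if $n_k > 0$, forces $s_i w_k < w_k$ unconditionally if $n_k < 0$, and — via its third case — propagates $s_i w_k > w_k$ from $s_i w_{k+1} > w_{k+1}$ when $n_k = 0$.

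The decisive combinatorial input is the head property: since $e_i(h)$ is undefined, the cut $i$-signature of $h$ has no unpaired $-1$. Set $k_0 := \min\{k : n_k \ne 0\}$ when this minimum exists. Reading the cut signature in the backward order from $\phi(h)$ toward $\iota(h)$, the blocks contributed by $\sigma_{k_0 - 1}, \dots, \sigma_1$ (which appear after $\sigma_{k_0}$'s block) are all composed of $0$'s, so the last non-zero signature entry is $\sgn(n_{k_0})$. A $-1$ there would have no $+1$ to its right and thus be unpaired, contradicting the head property; hence $n_{k_0} > 0$.

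Assembling the pieces: if $k_0$ exists, Lemma \ref{L:lowlift} at step $k_0$ gives $s_i w_{k_0} > w_{k_0}$ unconditionally, and then for each subsequent $k < k_0$ the step is a pure $0$-step, so the third case of the lemma propagates the $+$ sign down to $s_i u = s_i w_1 > w_1 = u$. If instead $k_0$ does not exist, every $n_k = 0$, so neither $e_i$ nor $f_i$ is defined on $h$, forcing $|\St| = 1$; the lemma's hypothesis then supplies $z < s_i z$, and iterating the third case of Lemma \ref{L:lowlift} from $w_{m+1} = z$ downward again yields $s_i u > u$. The main technical subtlety, to my mind, will be cleanly identifying the last non-zero entry of the cut signature with $\sgn(n_{k_0})$ and invoking the unpaired-$-1$ obstruction from the head property; everything else reduces to a routine case-by-case application of Lemma \ref{L:lowlift}.
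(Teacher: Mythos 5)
Your proof is correct and follows essentially the same route as the paper's: you pin down the last step of nonzero $i$-height, argue via the head property that this height must be positive, and then propagate through Lemma \ref{L:lowlift}, with the $|\St|=1$ case handled by the hypothesis $z<s_iz$. The only differences are cosmetic — you index the directions from $\iota(h)$ to $\phi(h)$ and work with distinct directions rather than the cut steps — and you spell out the correspondence between the sign of the height and the $s_i$-action on $W/W_\la$, which the paper uses implicitly.
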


\begin{proof} 
Let $h=(\sigma_1\le\sigma_2\le\dotsm\le\sigma_\ell)$.
Let $j$ be largest such that $\sigma_j\ne s_i\sigma_j$, assuming that such an index exists. 
In particular, this happens when $|\St|\ge 2$. Since $h$ is the head, by the signature rule 
we have $\sigma_j<s_i \sigma_j$.  Let $w_0=z$ and $w_k=\mn(w_{k-1},\sigma_k)$ for $1\le k\le \ell$.
By applying Lemma \ref{L:lowlift} repeatedly, we deduce first $w_j<s_i w_j$, and then $u<s_i u$. 

We are left with the case when $\sigma_k=s_i\sigma_k$ for all $k$. But then $|\St|=1$, 
so we can use the assumption $z=w_0<s_i w_0$. We conclude the proof as above, by applying 
Lemma \ref{L:lowlift} repeatedly.
\end{proof}

\begin{lem} \label{L:edgelift} Let $p,p'\in \St$ with $f_i(p)=p'$. 
Let $z\in W$ be such that $z W_\la \le \phi(p)$, so we can define $u:=\up(z,p)$ and $u':=\up(z,p')$. 
Suppose that $p'\ne t$ or $z<s_i z$.
Then $u'=u$ or $u'=s_i u>u$, and the latter occurs only if $p=h$.
\end{lem}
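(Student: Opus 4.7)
Following the indexing used in the proof of Lemma~\ref{L:headlift}, subdivide $p$ into unit $i$-height steps $\sigma_1\le\sigma_2\le\dotsm\le\sigma_\ell$ with $\sigma_1=\phi(p)$ and $\sigma_\ell=\iota(p)$, and set $w_0:=z$, $w_k:=\up(w_{k-1},\sigma_k)$, so that $u=w_\ell$. Define $w_k'$ for $p'$ analogously, giving $u'=w_\ell'$. Let $j$ be the position at which $f_i$ acts on $p$, i.e., the rightmost unpaired $+1$ in the $i$-signature of $p$; then $p'$ agrees with $p$ except that $\sigma_j$ is replaced by $s_i\sigma_j>\sigma_j$, and $w_k'=w_k$ for $k<j$. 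The strategy is to first establish the pivotal relation $s_iw_{j-1}>w_{j-1}$; then invoke Lemma~\ref{L:liftleft} at position~$j$ to obtain $w_j'=s_iw_j>w_j$; and finally propagate this ``$s_i$-perturbation'' through positions $k>j$ via Lemma~\ref{L:liftright}.

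For the pivotal relation, the key observation is that all unmatched $-1$'s of $p$ lie at positions $>j$, so the $i$-signature restricted to $\{1,\dotsc,j-1\}$ consists only of unmatched $+1$'s and matched pairs. If there is a largest non-$s_i$-fixed index $k^*\in\{1,\dotsc,j-1\}$, then necessarily $h_{k^*}=+1$: a matched $-1$ at $k^*$ would demand a partner $+1$ at some index in $(k^*,j)$, contradicting the maximality of $k^*$. Lemma~\ref{L:lowlift} yields $s_iw_{k^*}>w_{k^*}$, and its $s\tau=\tau$ clause propagates this through the $s_i$-fixed indices in $(k^*,j-1]$. If instead every $\sigma_k$ with $k<j$ is $s_i$-fixed, a stack-based matching argument shows that $p$'s only unmatched $+1$ was at~$j$, so $p'$ has no unmatched $+1$'s, forcing $p'=t$; the hypothesis then guarantees $z<s_iz$, and Lemma~\ref{L:lowlift} propagates $s_iw_0>w_0$ forward to $s_iw_{j-1}>w_{j-1}$.

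With $w_j'=s_iw_j>w_j$ in hand, I would prove by induction on $k\in\{j+1,\dotsc,\ell\}$ that $w_k'\in\{w_k,s_iw_k\}$, and moreover that $w_k'=s_iw_k$ forces every index in $(j,k]$ to be $s_i$-fixed. For the inductive step, if $w_{k-1}'=s_iw_{k-1}$, the inductive hypothesis rules out any $-1$ in $(j,k-1]$, so a matched $+1$ at $k$ is impossible (its partner would need to lie in $(j,k)$); thus $h_k\in\{0,-1\}$, i.e., $s_i\sigma_k\le\sigma_k$, and Lemma~\ref{L:liftright} with $v=w_{k-1}$ and $\tau=\sigma_k$ delivers both parts of the claim, with the ``$s_iw_k$-branch'' arising only when $s_i\sigma_k=\sigma_k$. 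Specializing to $k=\ell$ gives $u'\in\{u,s_iu\}$; if $u'=s_iu$, every index in $(j,\ell]$ is $s_i$-fixed, so $p$ has no unmatched $-1$'s, forcing $p=h$.

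The main obstacle is the pivotal relation $s_iw_{j-1}>w_{j-1}$: its only possible failure mode is the second case above (no non-$s_i$-fixed $\sigma_k$ for $k<j$) paired with $z>s_iz$, and the lemma's hypothesis ``$p'\ne t$ or $z<s_iz$'' is tuned to exclude precisely this combination, via the implication that this case forces $p'=t$.
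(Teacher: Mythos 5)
Your proof is correct and follows essentially the same path as the paper's: the same unit-step subdivision and lift sequences $w_k,w_k'$, the same identification of the pivotal pre-$j$ index (your $k^*$ is the paper's $r-1$), the same use of Lemma~\ref{L:lowlift} to obtain $s_iw_{j-1}>w_{j-1}$, then Lemma~\ref{L:liftleft} at $j$ and Lemma~\ref{L:liftright} thereafter, with the hypothesis ``$p'\ne t$ or $z<s_iz$'' invoked in exactly the same degenerate case. The only difference is cosmetic: you package the post-$j$ propagation as an explicit induction, whereas the paper picks out the largest $s_i$-fixed run index $q$ after $j$ and argues in two steps.
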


\begin{proof} 
Denote the steps of $p$ (indexed in reverse order) by
\begin{align*}
\phi(p)=\sigma_1 \le \sigma_2 \le\dotsm\le\sigma_\ell = \iota(p)\,.
\end{align*} 
We know that $p'$ is obtained from $p$ by replacing a step
$\sigma_j$ by $s_i \sigma_j>\sigma_j$. 
Setting $z=w_0=w'_0$, let
$w_k = \mn(w_{k-1},\sigma_k)$ for $1\le k\le \ell$, and define $w'_k$ similarly for
$p'$ instead of $p$. Then $u=w_\ell$ and $u'=w_\ell'$. 

Let $1\le r\le j$ be smallest such that $s_i \sigma_k = \sigma_k$ for all $r\le k <j$.
If $r>1$ then the signature rule implies $\sigma_{r-1}<s_i\sigma_{r-1}$. By applying Lemma \ref{L:lowlift} 
repeatedly, we deduce first
\begin{equation}\label{wr1}
w_{r-1}<s_i w_{r-1}\,,
\end{equation}
and then $w_{j-1} < s_i w_{j-1}$.  Alternatively, if $r=1$, then $p'=t$ by the signature rule, 
so we must have $z<s_i z$. But this means that \eqref{wr1} again holds, so we can deduce 
$w_{j-1}<s_i w_{j-1}$ like above. On another hand, we clearly have $w'_k=w_k$ for $0 \le k < j$.
Lemma \ref{L:liftleft} then gives $w'_j=s_i w_j>w_j$.

Let $j\le q\le \ell$ be largest such that $s_i \sigma_k=\sigma_k$ for $j < k \le q$.
By repeated applications of Lemma \ref{L:liftright}, we deduce that either $w'_q=w_q$,
so that $w'_\ell=w_\ell$ and we are done, or (as we shall assume) $w'_q=s_iw_q>w_q$.
If $q=\ell$ then $w'_\ell = s_i w_\ell>w_\ell$; moreover, as there were no steps
$\sigma_k$ with negative $i$-height for $k>j$, we have $p=h$.
Otherwise, we have $q<\ell$ and, by the signature rule, $\sigma_{q+1}>s_i \sigma_{q+1}$.
By Lemma \ref{L:liftright}, we have $w'_{q+1}=w_{q+1}$, and therefore $w'_\ell=w_\ell$.
\end{proof}

\begin{lem}\label{L:endstring} Consider $z\in W$ satisfying $z<s_i z$ and
$s_i zW_\la \le \phi(t)$. Let $u=\up(z,t)$ and $u'=\up(s_i z,t)$. 
If $|\St|\ge 2$, we always have $u'=u$. If $|\St|=1$, then we have $u'=u<s_i u$ or $u'=s_i u>u$. 
\end{lem}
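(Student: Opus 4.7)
My plan is to run the two iterative lift constructions for $u=\up(z,t)$ and $u'=\up(s_iz,t)$ in parallel and control their relationship step by step using Lemma~\ref{L:liftright}. Index the directions of $t$ in reverse as
\[
\phi(t)=\sigma_1\le\sigma_2\le\dotsm\le\sigma_\ell=\iota(t),
\]
and set $w_0=z$, $w'_0=s_iz$, $w_k=\up(w_{k-1},\sigma_k)$, $w'_k=\up(w'_{k-1},\sigma_k)$, so that $u=w_\ell$ and $u'=w'_\ell$. I will establish by induction on $k$ the dichotomy: at every step $k$, either \textbf{(A)} $w'_k=w_k$, or \textbf{(B)} $w'_k=s_iw_k$ with $s_iw_k>w_k$ and, additionally, $s_i\sigma_j=\sigma_j$ for every $j\le k$. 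The base case $k=0$ is Case~(B) by the hypothesis $z<s_iz$, the extra condition being vacuous.

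In the inductive step, Case~(A) propagates trivially. The substance is in Case~(B): the coset equality $s_iw_{k-1}W_\la=w'_{k-1}W_\la=w_{k-1}W_\la=\sigma_{k-1}$ forces $s_i\sigma_{k-1}=\sigma_{k-1}$, and with $v=w_{k-1}$, $s=s_i$, $\tau=\sigma_k$ the first two hypotheses of Lemma~\ref{L:liftright}, namely $v<sv$ and $svW_\la\le\tau$, are automatic. Provided the remaining hypothesis $s_i\sigma_k\le\sigma_k$ holds, Lemma~\ref{L:liftright} gives $w'_k\in\{w_k,s_iw_k\}$, with the second alternative requiring $s_i\sigma_k=\sigma_k$, which preserves the inductive dichotomy.

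The main obstacle is verifying the inequality $s_i\sigma_k\le\sigma_k$ whenever Case~(B) reaches step $k-1$. For $|\St|\ge 2$ it needs no extra work: Proposition~\ref{P:infin} gives $s_i\sigma_1<\sigma_1$ strictly, so at step~$1$ Lemma~\ref{L:liftright} rules out the $s_iw_1$ alternative, transitioning us to Case~(A) and hence $u'=u$ thereafter. For $|\St|=1$, where $t=h$, I appeal to the balance of the $i$-signature of $t$: by the extra property in Case~(B), each of $\sigma_1,\dotsc,\sigma_{k-1}$ has $i$-height $0$ and contributes nothing to the signature; if $\sigma_k$ had positive $i$-height, it would contribute at least one right parenthesis with no matching left parenthesis before it, contradicting the tail property of $t$ (no unpaired $+1$). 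Thus $s_i\sigma_k\le\sigma_k$ and the induction closes.

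Reading the dichotomy at $k=\ell$: for $|\St|\ge 2$ only Case~(A) survives, giving $u'=u$. For $|\St|=1$, either Case~(A) yields $u'=u$ or Case~(B) yields $u'=s_iu>u$; in both alternatives Lemma~\ref{L:headlift}, applied with $h=t$ and the hypothesis $z<s_iz$, produces $u<s_iu$, which completes the statement.
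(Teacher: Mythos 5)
Your proof is correct and takes essentially the same route as the paper's: you run the two lift sequences $w_k,w'_k$ in parallel, use Lemma~\ref{L:liftright} at each step, and note that once $w'_k=w_k$ the equality propagates; the only real content is controlling the steps where $w'_k=s_iw_k$, which you track via the condition $s_i\sigma_j=\sigma_j$ for $j\le k$ (equivalently, $i$-height $0$). The paper phrases this by selecting the largest prefix index $j$ with $s_i\sigma_k=\sigma_k$ for all $k\le j$ and then invoking the signature rule to get $s_i\sigma_{j+1}<\sigma_{j+1}$ (no unpaired $+1$ preceded only by zeros at the tail $t$), which is exactly your induction-closing argument; the final appeal to Proposition~\ref{P:infin} for $|\St|\ge 2$ and to Lemma~\ref{L:headlift} for $u<s_iu$ when $|\St|=1$ are also identical.
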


\begin{proof} Let $t=(\sigma_1\le\sigma_2\le\dotsm\le\sigma_\ell)$,
$w_0=z$, $w'_0=s_i z$, $w_k = \mn(w_{k-1},\sigma_k)$, 
and $w'_k=\mn(w'_{k-1},\sigma_k)$, for $1\le k\le \ell$. We have $u=w_\ell$ and $u'=w_\ell'$.

Let $0\le j\le\ell$ be largest such that $\sigma_k = s_i \sigma_k$ for $1\le k\le j$.  
By repeated applications of Lemma \ref{L:liftright}, we have $w'_j=w_j$, which implies $u'=u$, 
or $w'_j = s_i w_j>w_j$. If $j=\ell$, then the relationship between $u'$ and $u$ is established. 
Otherwise, in the latter case, we have $\sigma_{j+1} > s_i \sigma_{j+1}$, by the signature rule.
Lemma \ref{L:liftright} then gives $w'_{j+1}=w_{j+1}$, and therefore $u'=u$ once again.

Note that $j=\ell$ only if $|\St|=1$, so $|\St|\ge 2$ implies $j<\ell$. In the case $|\St|=1$, 
we also need to show that $u<s_i u$ in general (this is already known if $u'=s_i u$). 
This follows from Lemma \ref{L:headlift}, since now $t=h$.
\end{proof}

We called $\St\setminus\{h,t\}$ the middle of the $i$-string $\St$. 

\begin{lem} \label{L:Pmiddle} If $\Pu^\la_{w,z}\cap \St$ contains any element in the middle of $\St$,
then it contains the entire middle of $\St$.
\end{lem}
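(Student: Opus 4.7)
The plan rests on two uniformities across the middle of an $i$-string $\St=\{h=p_0,\,p_1,\,\ldots,\,p_m=t\}$. First, Proposition \ref{P:infin} yields $\phi(p_0)=\phi(p_1)=\ldots=\phi(p_{m-1})$, so the condition $\phi(p_j)\ge zW_\la$ holds for one middle $p_j$ if and only if it holds for all of them. Second, Lemma \ref{L:edgelift} says that along an edge $p_\ell\to p_{\ell+1}=f_i(p_\ell)$ with $p_{\ell+1}\ne t$ (no assumption on $z$ needed), one has $\up(z,p_{\ell+1})\in\{\up(z,p_\ell),\,s_i\up(z,p_\ell)\}$, with the potential jump occurring only when $p_\ell=h$.

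Given these ingredients, the argument is straightforward. Suppose $p_k\in\Pu^\la_{w,z}\cap\St$ with $1\le k\le m-1$, so that $\phi(p_k)\ge zW_\la$ and $\up(z,p_k)=w$. Fix another middle index $1\le j\le m-1$. Then $\phi(p_j)=\phi(p_k)\ge zW_\la$ by the first observation. Walking along the string from $p_k$ to $p_j$ through middle elements only, each forward edge $p_\ell\to p_{\ell+1}$ traversed satisfies $p_\ell\ne h$ (since $\ell\ge 1$) and $p_{\ell+1}\ne t$ (since $\ell+1\le m-1$), so the second observation forces $\up(z,p_{\ell+1})=\up(z,p_\ell)$. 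Iterating gives $\up(z,p_j)=\up(z,p_k)=w$, hence $p_j\in\Pu^\la_{w,z}$.

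I expect no serious obstacle here; the only subtlety is confirming that the hypothesis ``$p_{\ell+1}\ne t$ or $z<s_iz$'' of Lemma \ref{L:edgelift} is met by $p_{\ell+1}\ne t$ alone for every middle-to-middle edge, so that no uncontrolled assumption on $z$ has to be imposed. The asymmetric role of $h$ versus $t$ in Lemma \ref{L:edgelift} is precisely what restricts the statement to the middle rather than to the whole $i$-string: the lift may genuinely jump across the edge out of $h$, and the behavior across the edge into $t$ is not uniformly determined without a hypothesis on $z$.
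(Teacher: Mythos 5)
Your proof is correct and uses exactly the two ingredients the paper cites (Proposition \ref{P:infin} for constancy of $\phi$ on $\St\setminus\{t\}$ and Lemma \ref{L:edgelift} for constancy of the lift across middle-to-middle edges); the paper simply states the lemma follows immediately from these, and your write-up is the expanded version of that one-liner. The remark about the asymmetry of $h$ and $t$ in Lemma \ref{L:edgelift} is an accurate explanation of why the claim is restricted to the middle.
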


\begin{proof} This follows immediately from Proposition \ref{P:infin} and Lemma \ref{L:edgelift}.
\end{proof}


\subsection{The proof of Proposition \ref{P:pathlift}} We retain the notation from the
previous subsection, in particular \eqref{E:thestring} and
\eqref{E:iotaphi}. Let $z\in W$ be such that
\begin{align}\label{E:zdom}
  z<s_i z\,.
\end{align}
For the rest of this section we assume that at least one of the lifts $\up(z,p)$ or $\up(s_i z,p)$ 
are defined for some $p$ in $\St$. That is, at least one of the following statements holds for some $k$:
\begin{equation}\label{zlefin}zW_\lambda\le\phi_k\,,\;\;\;\;\;\;\;\;s_izW_\lambda\le\phi_k\,.\end{equation}

\begin{lem}\label{L:findir} We have one of the following two cases, which are merged into a 
single case if $|\St|=1$: {\rm (1)} both statements in {\rm\eqref{zlefin}} hold for all $k$; 
{\rm (2)} the first statement holds for all $k$, while the second one only holds for $k=m$.
\end{lem}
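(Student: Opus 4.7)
My plan is to reduce to the classical lifting property of Bruhat order on $W$ via minimum-length coset representatives, combined with the structural dichotomy for $(\phi_k)$ supplied by Proposition~\ref{P:infin}. Let $J=\{j\in I\mid s_j\la=\la\}$ so that $W_\la=W_J$; let $z^J\in W^J$ be the minimum representative of $zW_\la$ and $v\in W^J$ the minimum representative of $\phi_m$. Throughout I write (I) and (II) for the first and second conditions of \eqref{zlefin} at a given index $k$.

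First I would record the basic monotonicity $zW_\la\le s_izW_\la$. Writing $z=z^Jz_J$ as in Proposition~\ref{P:coset}(2), the hypothesis $s_iz>z$ forces $s_iz^J>z^J$ by a short length comparison. Proposition~\ref{P:coset}(3) then puts us either in the case $s_iz^J\in W^J$ (so that $(s_iz)^J=s_iz^J$ and $s_izW_\la$ is strictly greater than $zW_\la$) or in the case $s_iz^J=z^Js'$ with $s'\in S\cap W_J$ (so that $s_izW_\la=zW_\la$ and (I) is equivalent to (II) for every $k$). Consequently (II) always implies (I), and the hypothesis of the lemma forces (I) to hold for at least one $k$; only the question of where (II) holds remains.

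Next I would use that in both possibilities for $(\phi_k)$ given by Proposition~\ref{P:infin} one has $s_iv<v$, and Proposition~\ref{P:coset}(3)(a) then says $s_iv\in W^J$ is the minimum representative of $\phi_0$. Applying the lifting property with the descent $s_iv<v$ and the ascent $s_iz^J>z^J$ gives
\[z^J\le v\iff z^J\le s_iv,\]
while applying the descent--descent lifting to $s_iz^J$ (when $s_iz^J\ne z^J$) gives
\[s_iz^J\le v\iff z^J\le v.\]
Translated to $W/W_\la$, these say that (I) at $k=m$ is equivalent to (I) at any $k<m$, and that (II) at $k=m$ is equivalent to (I) at $k=m$. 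In the possibility of Proposition~\ref{P:infin} with $\phi_0=\dots=\phi_m$ this collapses everything and the hypothesis forces case (1). In the other possibility (I) holds for all $k$, (II) at $k=m$ is automatic, and (II) at $k<m$ reduces to the strictly stronger condition $s_iz^J\le s_iv$, which produces case (1) or case (2) according as it holds or fails.

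Finally, when $|\St|=1$ the two cases coincide since $k=m=0$; the residual subcase $s_i\phi_0=\phi_0$ of Proposition~\ref{P:infin} is handled by writing $s_iv=vs'$ for some $s'\in S\cap W_\la$ via Proposition~\ref{P:coset}(3)(c) and collapsing $s_iz^J\le vs'$ down to $s_iz^J\le v$ via Proposition~\ref{P:coset}(4). The main technical obstacle I anticipate is the bookkeeping between statements in $W$ and their translations to $W/W_\la$: the classical lifting property is stated in $W$, but the conditions we compare live on the quotient, and the two genuinely distinct conditions $s_iz^J\le v$ and $s_iz^J\le s_iv$ must not be conflated, since their difference is exactly what separates case (1) from case (2).
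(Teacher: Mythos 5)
Your proof is correct in substance, but it takes a genuinely different route from the paper's. You work at the level of $W$: you pass to minimal coset representatives $z^J$ and $v$, invoke the classical lifting property (Proposition~\ref{P:Bruhat}) for the single equivalence $z^J\le v\iff z^J\le s_iv\iff s_iz^J\le v$, and then translate conditions back to $W/W_\la$ via Proposition~\ref{P:coset}. The paper instead stays entirely in $W/W_\la$ and invokes Lemma~\ref{L:Bruhatcoset}, the coset-level lifting lemma that was isolated precisely so that this kind of bookkeeping between $W$ and $W/W_J$ would not have to be repeated. Both approaches succeed; the paper's is shorter, while yours makes the role of the $<_b$-type comparison $s_iz^J\le s_iv$ (as the strictly stronger condition that separates case~(1) from case~(2)) more visible. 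One slip to fix: your assertion that ``Proposition~\ref{P:coset}(3)(a) then says $s_iv\in W^J$ is the minimum representative of $\phi_0$'' holds only in the second alternative of \eqref{fincases} (where $\phi_0=s_i\phi_m<\phi_m$); in the first alternative $\phi_0=\dotsb=\phi_m$ and the minimum representative of $\phi_0$ is $v$, not $s_iv$ (which then represents $s_i\phi_m\ne\phi_0$). Fortunately your subsequent case analysis treats the two alternatives separately and uses only the $W$-level equivalence $z^J\le v\iff z^J\le s_iv\iff s_iz^J\le v$, which is valid regardless, so the conclusion stands once that sentence is corrected.
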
 
\begin{proof}
We start by assuming that $|\St|\ge 2$, and by showing that the first statement either holds 
for all $k$ or for no $k$. Based on \eqref{fincases}, this amounts to showing that, if $\phi_0=s_i\phi_m$ 
(which is the second case in the mentioned relation), then $zW_\lambda\le \phi_m$ implies 
$zW_\lambda\le\phi_0=s_i\phi_m$. This implication follows from Lemma \ref{L:Bruhatcoset}, 
since $s_i\phi_m<\phi_m$ and $s_iz>z$, so $s_izW_\lambda\ge zW_\lambda$. To complete the proof, 
it suffices to show that $zW_\lambda\le\phi_m$ implies $s_i zW_\lambda\le \phi_m$, for any $m\ge 0$. 
This is again justified by Lemma \ref{L:Bruhatcoset}, since we have $s_i\phi_m\le\phi_m$ in all possible cases. 
\end{proof}

Now let us consider the following Deodhar lifts, whenever the corresponding inequality in \eqref{zlefin} holds:
\[u_k:=\up(z,p_k)\,,\;\;\;\;\;\;\;\;u_k':=\up(s_iz,p_k)\,.\]
By Lemma \ref{L:findir}, all $u_k$ are defined, and either all $u_k'$ are defined or only $u_m'$. 
We will implicitly use this fact below.

\begin{proof}[Proof of Proposition {\rm \ref{P:pathlift}}] The main idea is to analyze 
systematically all possibilities regarding the relationships between $u_k$ and $u_l'$, 
for all $k$ and $l$. For each case, we indicate the corresponding case in the table \eqref{E:chart}.

By Proposition \ref{P:infin}, there are three main cases.
\begin{itemize}
\item
\textit{Case} S.0: $|\St|=1$.
\item
\textit{Case} S.1: $|\St|\ge 2$ and $u_0 W_\la = u_1 W_\la = \dotsm = u_m W_\la < s_i u_0 W_\la$.
\item
\textit{Case} S.2: $|\St|\ge 2$ and $u_0 W_\la < u_1 W_\la = \dotsm = u_m W_\la = s_i u_0 W_\la$.
\end{itemize}

\underline{\textit{Case} S.0.} We have the following two cases, by Lemma \ref{L:endstring}.

\textit{Case} S.0.1: $u_0=u_0'<s_i u_0$. This case leads to Case U.1.1.

\textit{Case} S.0.2: $u_0'=s_iu_0>u_0$. This case leads to Case U.2.1.

We now analyze cases S.1 and S.2, and start with some general observations. 
We have $u_0<s_i u_0$ and $u'_0<s_i u'_0$, by Lemma \ref{L:headlift}. This implies that
$u_0'\ne s_i u_0$, which is implicitly used several times below. 
By Lemma \ref{L:Pmiddle}, we have $u_1=u_2=\dotsm=u_{m-1}$, 
and (when they exist) $u'_1=u'_2=\dotsm=u'_{m-1}$. 
In addition, if $|\St|\ge 3$, then Lemma \ref{L:edgelift} gives $u_{m-1}=u_m$. 
Finally, Lemma \ref{L:endstring} gives $u_m=u'_m$, so we always have $u_1=u_2=\dotsm=u_{m}=u_m'$ in cases S.1 and S.2.

\underline{\textit{Case} S.1.} 
By Lemma \ref{L:edgelift}, 
we have
either $u_1=u_0$ or $u_1 = s_i u_0 > u_0$. Suppose the latter.
Then $s_i u_1 < u_1$. But $u_1=u_m$, so
$s_i u_m<u_m$, contradicting the assumption of Case S.1.
Therefore $u_1=u_0$ and we have
\[  u_0 = u_1 = \dotsm = u_m = u_m' < s_i u_0\,.\]

\textit{Case} S.1.1: $u_0',\,\dotsc,\,u_{m-1}'$ are not defined. This leads to Case U.1.1.

We may now assume that $u_0',\dotsc,u_{m-1}'$ are defined. We have the following two cases.


\textit{Case} S.1.2: $u'_1=u'_0$ or $|\St|=2$. It follows that $u_0'=u_1'=\ldots=u_{m-1}'$. 
If $u'_0=u_0$, then all $u_k$ and $u_l'$ coincide, and Case U.1.2 occurs. 
Otherwise, Case U.1.1 occurs for $x=u_0$, and Case U.3.1 for $x=u_0'$.

\textit{Case} S.1.3: $u'_1\ne u'_0$ and $|\St|\ge 3$. By Lemma \ref{L:edgelift}, we have $u'_1=s_i u'_0 > u'_0$.  
If $u'_0=u_0$, then we have Case U.1.3. Otherwise, we have Case U.1.1 for $x=u_0$, and Case U.3.2 for $x=u_0'$.

\underline{\textit{Case} S.2.} By Lemma \ref{L:edgelift} and the Case S.2 assumption, 
we have $u_0<u_1=s_i u_0$. Therefore, we have
\[u_0<u_1=\ldots=u_m=u_m'=s_i u_0\,.\]

\textit{Case} S.2.1: $u_0',\,\ldots,\,u_{m-1}'$ are not defined. This leads to Case U.2.1.

We may now assume that $u_0',\dotsc,u_{m-1}'$ are defined. We have the following two cases.

\textit{Case} S.2.2: $u'_1\ne u'_0$ or $|\St|=2$. If $|\St|\ge 3$, then Lemma \ref{L:edgelift} gives $u'_1=s_i u'_0>u_0'$. 
Assume first $u_0'=u_0$. The above facts imply $u_1=u_2=\ldots=u_m=u_1'=u_2'=\ldots=u_m'=s_i u_0$, so Case U.2.2 occurs. 
Now assume $u_0'\ne u_0$. Then Case U.2.1 occurs for $x=u_0$. Alternatively, for $x=u_0'$, 
we have Case U.3.2 if $|\St|\ge 3$, and Case U.3.1 if $|\St|=2$.

\textit{Case} S.2.3: $u_1'=u_0'$ and $|\St|\ge 3$. 
If $u_0'=u_0$, then Case U.2.3 occurs. If $u_0'\ne u_0$, then we have Case U.2.1 for $x=u_0$, and Case U.3.1 for $x=u_0'$.
\end{proof}

\subsection{On the proof of \eqref{comm1}}
\newcommand{\DD}{D}

The proof of the antidominant line bundle Chevalley rule \eqref{comm1} is omitted
as it is entirely analogous to that of \eqref{comm2}, but does not appear to formally follow from it.
The following is the analogue of Proposition \ref{P:pathlift}.
Note that the two propositions are related as follows: 
$\up$ switches with $\down$,  $\phi$ switches with $\iota$, 
the strings are reversed, and the Bruhat relations are dualized.
See Remark \ref{R:antidominant}.

\begin{prop} \label{P:downpathlift}
Let $w\in W$ be such that $sw < w$ and $w W_\la \ge \iota(t)$. Let $z=\down(w,t)$. Then $sz<z$ and
\begin{equation}\label{E:lift0}
\text{$\{\,\down(w,p) \mid p\in\St\,\}\subseteq\{z,sz\}$ and $\iota(p) \le w W_\la$ for $p\in\St$}.
\end{equation}
In terms of $\down(sw,\,\cdot\,)$ on $\St$, we have the following three disjoint cases:
\begin{align}
\label{E:lift1}
&\text{$\iota(p) \le swW_\la$ for $p\in\St$ and $\{\down(sw,p) \mid p\in\St\,\}\subseteq\{z,sz\}$} \\
\label{E:lift2} 
&\text{$\iota(h) \le swW_\la$, $\down(sw,h)\in\{z,sz\}$, and $\iota(p) \not\le swW_\la$ for $p\in\St\setminus\{h\}$} \\  
\label{E:lift3}
&\text{$\iota(p) \le swW_\la$ for $p\in\St$, $\down(sw,h)\in\{z,sz\}$, and 
$\{\down(sw,p) \mid p\in\St\setminus\{h\}\,\}\subseteq\{\tz,s\tz\}$} 
\end{align}
where $\tz\notin \{z,sz\}$ and $s\tz<\tz$.

More precisely, the following chart
gives the pairwise disjoint possibilities in terms of arbitrary $x$ with $sx<x$.
\begin{align}\label{E:downchart}
\begin{array}{|c||c|c|c||c|c|c|||c|c||} \hline
\Pd_{w,x}(\St) &    \multicolumn{3}{c||}{\St} & \multicolumn{3}{|c|||}{ t } &\multicolumn{2}{|c||}{ \emptyset } \\ \hline 
\Pd_{w,sx}(\St) &    \multicolumn{3}{c||}{\emptyset} & \multicolumn{3}{|c|||}{ \St\setminus t }
&\multicolumn{2}{|c||}{ \emptyset } \\ \hline\hline
\Pd_{sw,x}(\St) & h & \St & \{h,t\}& \emptyset  & t& \St\setminus h &\St\setminus h&t \\ \hline 
\Pd_{sw,sx}(\St)&\emptyset &\emptyset & m & h & \St\setminus t & h& \emptyset& m  \\ \hline\hline
|\St| & {\ge 1} & {\ge 2} & {\ge 3} & { \ge 1 }& {\ge 2}& {\ge 3} & {\ge 2 }&{\ge3} \\ \hline \hline
 \mathrm{Cases} & {\rm D}.1.1 &{\rm D}.1.2&{\rm D}.1.3 & {\rm D}.2.1 &{\rm D}.2.2 & {\rm D}.2.3 & {\rm D}.3.1 & {\rm D}.3.2 \\ \hline
\end{array}
\end{align}
Cases {\rm D}.a.b for $a\in\{1,2\}$ and $b\in\{2,3\}$ correspond to {\rm \eqref{E:lift1}} when $x=z$. Cases {\rm D.1.1} and 
{\rm D.2.1} correspond to \eqref{E:lift2} and \eqref{E:lift3} when $x=z$. Cases {\rm D.3.*} correspond to \eqref{E:lift3} when $x=\tz$.
\end{prop}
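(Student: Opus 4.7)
The plan is to mirror the proof of Proposition \ref{P:pathlift} throughout, exchanging $\up$ with $\down$, $\phi$ with $\iota$, $h$ with $t$, and reversing every Bruhat inequality. First I would record the down-lift analogues of Lemmas \ref{L:lowlift}, \ref{L:liftleft}, and \ref{L:liftright}. Concretely, the analogue of Lemma \ref{L:lowlift} reads: for $w=\down(v,\tau)$ with $vW_J\ge \tau$, the relation $s\tau>\tau$ (resp.\ $s\tau<\tau$) forces $sw>w$ (resp.\ $sw<w$), and if $s\tau=\tau$ together with $sv<v$, then $sw<w$. Each dualized proof is obtained by reversing the Bruhat inequalities in the original argument and invoking the self-dual Proposition \ref{P:Bruhat} and Lemma \ref{L:Bruhatcoset}.

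Next I would establish the down-lift counterparts of the three string lemmas. The analogue of Lemma \ref{L:headlift} asserts that if $z=\down(w,t)$ with $sw<w$ and either $|\St|\ge 2$ or $sw<w$ as given, then $sz<z$. The analogue of Lemma \ref{L:edgelift} states that for $p,p'\in\St$ with $f_i(p)=p'$ and $wW_\la\ge \iota(p')$, writing $z=\down(w,p)$ and $z'=\down(w,p')$, one has $z'=z$ or $z'=sz<z$, with the latter only occurring when $p'=t$. The analogue of Lemma \ref{L:endstring} compares $\down(w,h)$ with $\down(sw,h)$ under appropriate dual hypotheses. All three proofs mimic the originals by traversing the step sequence of each path, but now indexed from the initial end of the path rather than the final end, and by invoking the dualized Bruhat lemmas in exactly the pattern of the originals.

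With this dualized toolkit in place, I would carry out the case analysis in complete parallel to the proof of Proposition \ref{P:pathlift}. Proposition \ref{P:infin} partitions the situation into $|\St|=1$ and two subcases of $|\St|\ge 2$ distinguished by the behavior of $\iota_0$ versus $\iota_m$, playing the role of the former $\phi$-based cases S.0, S.1, S.2. Running through each possibility for whether $u_k=\down(w,p_k)$ and $u_k'=\down(sw,p_k)$ coincide or differ, and whether $u_k'$ is defined, assigns each combination to one of the columns of \eqref{E:downchart} in direct correspondence with \eqref{E:chart}.

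The main obstacle is not logical but organizational: one must keep signs and inequality directions straight throughout the dualization. Each preparatory lemma has several sub-cases indexed by how a simple reflection moves the relevant coset, and only the Bruhat inequalities, not the combinatorial structure of the signature rule nor the indexing of the strings, should be reversed. In a full write-up I would state the dualized lemmas explicitly but only sketch the final case analysis, referring back to the corresponding cases in the proof of Proposition \ref{P:pathlift}.
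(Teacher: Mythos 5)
Your overall plan is exactly what the paper indicates: the authors state explicitly that the proof of Proposition~\ref{P:downpathlift} is omitted because it is obtained from that of Proposition~\ref{P:pathlift} by the dualization you describe, so the route you propose is the intended one.

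However, the specific statement you give for the dual of Lemma~\ref{L:edgelift} has its inequality reversed, and this slip traces back to the fact that ``reversing the string'' must also exchange $f_i$ with $e_i$, which you did not account for. In the original, with $p'=f_i(p)$ and $u=\up(z,p)$, $u'=\up(z,p')$, the lift weakly increases as one moves away from the head: $u'=u$ or $u'=s_iu>u$, the jump occurring only at $p=h$. In the dual, the down-lift $\down(w,p_k)$ must weakly \emph{increase} as $k$ increases (equivalently, weakly decrease moving from $t$ toward $h$), because $\down(w,t)=z$ is the Bruhat-\emph{maximum} among the values $\down(w,p)$ on the string, while in the original $\up(z,h)=w$ is the Bruhat-minimum. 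Concretely, with $p'=f_i(p)$, $a=\down(w,p)$, $a'=\down(w,p')$, one should have $a'=a$ or $a'=s_ia>a$, the jump occurring only at $p'=t$; your version claims $a'=s_ia<a$, which is incompatible with the chart~\eqref{E:downchart} (in Case~D.2.1 the down-lift is strictly larger at $p_m=t$ than on $\St\setminus\{t\}$). Equivalently, if you keep the inequality $a'<a$ you must phrase the lemma for a step $p'=e_i(p)$. The statement you give for the analogue of Lemma~\ref{L:headlift} also needs tidying: the hypothesis ``$|\St|\ge 2$ or $z<s_iz$'' dualizes to ``$|\St|\ge 2$ or $sw<w$,'' which you wrote as a redundancy with the standing hypothesis $sw<w$. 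These are exactly the kinds of sign-tracking hazards you flag yourself; once they are straightened out, the parallel case analysis matches the chart and the proof is complete.
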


This concludes the proof of Theorem \ref{T:maintheorem}.

\section{The Chevalley formula in terms of the alcove model}\label{lchain}
In this section the Chevalley rules in Theorem \ref{T:maintheorem} are formulated
in terms of the alcove or $\la$-chain model of the first author and Postnikov \cite{LP,LP1}.
As a by-product, we obtain combinatorial descriptions of Demazure crystals and opposite
Demazure crystals \cite{Kas2} in terms of the alcove model, in Kac-Moody generality. 
In a mild difference of notation, in the definition of a $\la$-chain we use coroots consistently  
instead of roots, as in \cite{LP,LP1}.

\subsection{The alcove model and $\la$-chains}\label{S:alcmod} 
Let $\geh$ be a symmetrizable Kac-Moody algebra defined over $\C$.
Fix a dominant weight $\lambda$. By a root or coroot, 
we always mean a real root or coroot unless explicitly stated otherwise. 

An \emph{integral hyperplane} in $\La_\R=\R\otimes_{\Z}\La$ is one of the form
\begin{align}
  H_{\alpha,k} = \{ x\in \La_{\R}\mid \ip{\alpha}{x} = k\}
\end{align}
where $\alpha$ is a positive coroot and $k\in \Z$.

\begin{definition}\label{D:lambdahyperplane}
A \emph{$\la$-hyperplane} is an integral hyperplane $H_{\alpha,k}$
such that
\begin{align} \label{E:lambda-hyperplane}
0 \le k < \ip{\alpha}{\la}.
\end{align}
\end{definition}
By abuse of language, we will use the term ``$\la$-hyperplane'' to refer either to the pair $(\alpha,k)$
or the actual hyperplane $H_{\alpha,k}$. We call $k$ the height of $H_{\alpha,k}$.

\begin{rems}\label{R:hyperplane} (1) Consider the straight-line path in $\La_\R$ from $0$ to $\la$.
The $\la$-hyperplanes are precisely the integral hyperplanes that touch this path but do not
contain the endpoint $\la$. 

(2) If $\geh$ is infinite-dimensional then there are infinitely many positive
coroots, so that there are typically infinitely many $\la$-hyperplanes.
\end{rems}

\begin{definition}\label{D:lambdachain}
A \emph{$\la$-chain} is a total order on the set of $\la$-hyperplanes such that 
the following hold.
\begin{enumerate}
\item If $(\alpha,k),(\alpha,k')$ are $\la$-hyperplanes with $k<k'$, then $(\alpha,k)<(\alpha,k')$.
\item Given a $\la$-hyperplane $h=(\beta,k)$, a 
positive coroot $\alpha\ne\beta$, and an integer $m$ such that $\gamma=\alpha+m\beta$
is a positive coroot, we have
\[N_{<h}(\gamma)=N_{<h}(\alpha)+m N_{<h}(\beta)\]
where $N_{<h}(\eta)$ is the number of $\la$-hyperplanes less than $h$ with coroot $\eta$.
\end{enumerate}
\end{definition}

\begin{rem}
\label{R:original}
The original definition of a $\la$-chain in \cite{LP,LP1}
is obtained by forgetting the integer $k$ in each pair $(\alpha,k)$. More precisely, 
it involves the corresponding sequence of roots, while the counting condition 
uses the associated coroots. The integers $k$ are easily recovered by labeling the copies of
the coroot $\alpha$ by $0$ through $\ip{\alpha}{\la}-1$ in order of their appearance in the sequence of coroots.
\end{rem}

In \cite{LP1} a particular $\la$-chain is constructed.
It is described in the following proposition.
In particular $\la$-chains exist.

\begin{prop}\label{constr-lambdachain}\cite{LP1} Given a total order $I=\{1<2<\dotsm<r\}$
on the set of Dynkin nodes, one may express a coroot $\alpha=\sum_{i=1}^r c_i \alpha_i^\vee$
in the $\Z$-basis of simple coroots. Consider the
total order on the set of $\la$-hyperplanes defined by 
the lexicographic order on their images in $\Q^{r+1}$ under the map
\begin{equation}\label{E:stdvec}
(\alpha,k)\mapsto \frac{1}{\ip{\alpha}{\la}} (k,c_1,\ldots,c_r).
\end{equation}
This map is injective, thereby endowing 
the set of $\la$-hyperplanes with a total order, which is a $\la$-chain. We call it  the
lexicographic (lex) $\la$-chain. 
\end{prop}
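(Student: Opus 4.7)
The plan is to verify the three requirements implicit in the statement: injectivity of the map in \eqref{E:stdvec}, monotonicity in $k$ for fixed coroot (condition (1) of Definition \ref{D:lambdachain}), and the counting relation for coroots $\gamma=\alpha+m\beta$ (condition (2)). The first coordinate $k/\ip{\alpha}{\la}$ has a useful geometric interpretation via Remark \ref{R:hyperplane}(1): it is the parameter $t\in[0,1)$ at which the straight-line path $t\mapsto t\la$ crosses the hyperplane $H_{\alpha,k}$. For injectivity, suppose $(\alpha,k)$ and $(\alpha',k')$ have the same image. Then the last $r$ coordinates show that $\alpha$ and $\alpha'$ are positive rational multiples of one another; since real coroots of a symmetrizable Kac-Moody algebra in the same ray must coincide, we get $\alpha=\alpha'$, and then $k=k'$ from the first coordinate. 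Condition (1) is immediate because $k/\ip{\alpha}{\la}$ is strictly increasing in $k$.

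The core of the proof is condition (2). Fix $h=(\beta,k)$ and a positive coroot $\eta$ with $\ip{\eta}{\la}>0$. The $\la$-hyperplanes of coroot $\eta$ that are lex-smaller than $h$ split into those $(\eta,j)$ with $j/\ip{\eta}{\la}<k/\ip{\beta}{\la}$ (time-strict) and the at most one $(\eta,j)$ with $j/\ip{\eta}{\la}=k/\ip{\beta}{\la}$ whose remaining coordinates are lex less (tiebreaker). I would write $N_{<h}(\eta)=\lceil k\ip{\eta}{\la}/\ip{\beta}{\la}\rceil+B(\eta)$, where $B(\eta)\in\{0,1\}$ is the tiebreaker indicator and the ceiling equals $x$ when $x$ is already an integer. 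Observing that $N_{<h}(\beta)=k$ (the tiebreaker for $\eta=\beta$ is a strict self-comparison and hence fails), the identity $N_{<h}(\gamma)=N_{<h}(\alpha)+mN_{<h}(\beta)$ reduces to two assertions: first,
\[
\lceil k\ip{\gamma}{\la}/\ip{\beta}{\la}\rceil=\lceil k\ip{\alpha}{\la}/\ip{\beta}{\la}\rceil+mk,
\]
which is immediate from $\ip{\gamma}{\la}=\ip{\alpha}{\la}+m\ip{\beta}{\la}$ since $mk\in\Z$ can be pulled outside the ceiling; and second, $B(\gamma)=B(\alpha)$.

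For the tiebreaker equality, the key algebraic identity is
\[
\frac{c_i(\gamma)}{\ip{\gamma}{\la}}-\frac{c_i(\beta)}{\ip{\beta}{\la}}=\frac{\ip{\alpha}{\la}}{\ip{\gamma}{\la}}\left(\frac{c_i(\alpha)}{\ip{\alpha}{\la}}-\frac{c_i(\beta)}{\ip{\beta}{\la}}\right),
\]
obtained by expanding $c_i(\gamma)=c_i(\alpha)+mc_i(\beta)$ and $\ip{\gamma}{\la}=\ip{\alpha}{\la}+m\ip{\beta}{\la}$ (a parallel scalar identity accounts for the time equality needed to enter the tiebreaker regime). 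Since $\ip{\alpha}{\la}/\ip{\gamma}{\la}$ is a positive rational, both sides agree in sign coordinate by coordinate, so the lex comparisons against $(c_i(\beta)/\ip{\beta}{\la})_i$ give the same answer for $\gamma$ and $\alpha$, whence $B(\gamma)=B(\alpha)$. The main obstacle I foresee is the degenerate case $\ip{\alpha}{\la}=0$, where the scaling identity above collapses; there I would argue directly, using $\ip{\gamma}{\la}=m\ip{\beta}{\la}$ and the fact that $(c_i(\alpha))_i$ is a nonzero vector of nonnegative entries, to conclude $B(\gamma)=0=B(\alpha)$ and verify that both sides of the counting identity equal $mk$. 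A small sanity check that $\gamma$ being a positive coroot forces $\ip{\gamma}{\la}>0$ handles any remaining degeneracy.
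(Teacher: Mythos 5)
The paper does not contain its own proof of this proposition; it is cited from \cite{LP1}, so there is no in-paper argument to compare with. Your overall strategy---verify injectivity, condition (1), and condition (2) of Definition \ref{D:lambdachain}---is the natural one and is carried out correctly in the generic case. In particular the reduction of condition (2) to the tiebreaker equality $B(\gamma)=B(\alpha)$ via the ceiling-function count is clean, and the scaling identity you write is verified by direct expansion; since $\ip{\alpha}{\la}/\ip{\gamma}{\la}$ is a positive scalar, it preserves the sign of the first nonzero coordinate and hence the lex comparison against $\beta/\ip{\beta}{\la}$.

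There is, however, a genuine error in your final sentence: it is \emph{not} true that $\gamma\in\Pcp$ forces $\ip{\gamma}{\la}>0$. When $\la$ is singular, any positive coroot of $W_\la$ pairs to zero with $\la$, and in condition (2) the case $\ip{\gamma}{\la}=0$ genuinely occurs. Writing $0=\ip{\alpha}{\la}+m\ip{\beta}{\la}$ with $\ip{\beta}{\la}>0$ forces $m\le 0$; the case $m<0$ (so $\ip{\alpha}{\la}=-m\ip{\beta}{\la}>0$) is a nondegenerate instance in which your scaling identity has a zero denominator, so the argument as written breaks there. The fix is the same trick you already use for $\ip{\alpha}{\la}=0$: one computes $\alpha/\ip{\alpha}{\la}-\beta/\ip{\beta}{\la}=\gamma/(-m\ip{\beta}{\la})$, a nonzero vector with nonnegative entries (since $\gamma\in\Pcp$ and $-m>0$), whence $B(\alpha)=0$ and both sides of the counting identity equal $-mk$. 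Adding this symmetric case (and noting that $m=0$ gives $\gamma=\alpha$ trivially) closes the gap; with that correction the argument is complete.
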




For a finite root system, the definition of a $\lambda$-chain may be simplified.
The following is a characterization of the sequence of coroots (with repetition)
obtained from a $\la$-chain in the above sense, when the height $k$ of a $\la$-hyperplane $(\alpha,k)$ 
is forgotten. The heights are easily recovered due to condition (1) of Definition \ref{D:lambdachain}.

\begin{prop}\label{equivalence}\cite{LP1} 
Consider a finite root system and a finite sequence $(\beta_1,\beta_2,\dotsc,\beta_\ell)$ of positive coroots.
Then the following are equivalent:
\begin{enumerate}
\item The sequence of coroots is a $\la$-chain.
\item Each positive coroot $\alpha$ occurs exactly $\ip{\alpha}{\la}$ times in the sequence,
and for each triple of positive coroots $(\alpha,\beta,\gamma)$ with $\gamma=\alpha+\beta$, the subsequence 
restricted to copies of $\alpha$, $\beta$, and $\gamma$ 
is a concatenation of pairs $(\alpha,\gamma)$ and $(\beta,\gamma)$ (in any order).
\item There exists a {\em reduced alcove path}
$A_0=A_\circ\stackrel{-\beta_1}\longrightarrow \cdots
\stackrel{-\beta_l}\longrightarrow A_{l}=A_{-\lambda}$, in the sense of \cite{LP}.
\end{enumerate}
\end{prop}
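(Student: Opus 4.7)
Since the equivalence of (2) and (3) is established in \cite{LP1} through the reduced alcove path construction, the new content is the equivalence (1) $\iff$ (2), which relates our definition in terms of $\la$-hyperplanes to the sequence-of-coroots formulation; the remaining implications then follow by composition.

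For (1) $\Rightarrow$ (2), condition (1) of Definition \ref{D:lambdachain} gives that each positive coroot $\alpha$ appears exactly $\ip{\alpha}{\la}$ times in the sequence, one for each admissible height $k$ with $0\le k<\ip{\alpha}{\la}$. For the triple condition, I apply condition (2) of Definition \ref{D:lambdachain} with $m=1$ to the triple $(\alpha,\beta,\gamma)$; since $\gamma=\beta+\alpha$ as well, the same definition applied to $(\beta,\alpha,\gamma)$ gives the symmetric identity. These two counting relations, holding at every $\beta$-position and $\alpha$-position in the subsequence on $\{\alpha,\beta,\gamma\}$, force that strictly between consecutive elements of $\{\alpha,\beta\}$ there is exactly one $\gamma$, and no $\gamma$ precedes the first element of $\{\alpha,\beta\}$; combined with the total count of $\gamma$'s, this is precisely the pairing structure.

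For (2) $\Rightarrow$ (1), heights are reinstated as in Remark \ref{R:original}, so condition (1) of Definition \ref{D:lambdachain} is automatic. The $m=1$ case of condition (2) is obtained directly by reading off the number of $\gamma$'s at any $\beta$-position from the pairing structure. The case $m\ge 2$ arises only in the non-simply-laced types $B$, $C$, $F$, $G$, and reduces to $m=1$ by induction along the $\beta$-root string: for example, if $\gamma=\alpha+2\beta$, then $\delta=\alpha+\beta$ is also a positive coroot, and chaining the $m=1$ identities for the triples $(\alpha,\beta,\delta)$ and $(\delta,\beta,\gamma)$ yields
\[ N_{<h}(\gamma)=N_{<h}(\delta)+N_{<h}(\beta)=N_{<h}(\alpha)+2\,N_{<h}(\beta). \]
The case $m=3$ in type $G_2$ is handled identically with one additional iteration.

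The main obstacle is conceptual rather than computational: one must verify that the pairing description in (2), which is phrased for each triple $(\alpha,\beta,\gamma)$ in isolation, is consistent with the global counting condition of Definition \ref{D:lambdachain}(2) applied simultaneously to all $(\alpha,\beta,m)$. The induction along root strings required for $m\ge 2$ invokes several $m=1$ pairings at once, and the fact that these compose correctly depends essentially on the finite-type bound on root string length.
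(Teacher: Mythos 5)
The paper does not prove this proposition; it cites it wholesale from \cite{LP1} and proceeds immediately to the remark about the failure of the equivalence $(1)\Leftrightarrow(2)$ in the Kac--Moody case, so there is no internal proof against which to check your argument. That said, the reconstruction of $(1)\Leftrightarrow(2)$ is the right supplement if one wants Proposition \ref{equivalence} to rest on Definition \ref{D:lambdachain}, and your $(1)\Rightarrow(2)$ direction is sound: the two $m=1$ instances of Definition \ref{D:lambdachain}(2) at $\alpha$- and $\beta$-positions, together with $\ip{\gamma}{\la}=\ip{\alpha}{\la}+\ip{\beta}{\la}$, force exactly one $\gamma$ after each $\alpha$ or $\beta$, which is the pairing structure. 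Two points need tightening. First, the count ``each coroot $\alpha$ appears $\ip{\alpha}{\la}$ times'' does not come from condition (1) of Definition \ref{D:lambdachain}; it is built into the underlying set being totally ordered (there are exactly $\ip{\alpha}{\la}$ $\la$-hyperplanes $(\alpha,k)$ with $0\le k<\ip{\alpha}{\la}$), while condition (1) only fixes their relative order. Second, and more substantively, in $(2)\Rightarrow(1)$ you verify Definition \ref{D:lambdachain}(2) only for $m\ge 1$, but the definition quantifies over all integers $m$ with $\gamma=\alpha+m\beta$ a positive coroot, and $m\le -1$ occurs in every rank $\ge 2$ (already in $A_2$: $\alpha=\alpha_1^\vee+\alpha_2^\vee$, $\beta=\alpha_2^\vee$, $m=-1$). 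The fix is cheap---rewrite $N_{<h}(\gamma)=N_{<h}(\alpha)+mN_{<h}(\beta)$ as $N_{<h}(\alpha)=N_{<h}(\gamma)+(-m)N_{<h}(\beta)$, which is the positive-multiplier identity for $(\gamma,\beta,\alpha)$---but this step must appear for the verification to be complete. Finally, the closing paragraph overstates the difficulty: once $m\ge 1$ is established for every triple independently, Definition \ref{D:lambdachain}(2) is simply the conjunction of those instances, so there is no further ``global consistency'' left to check.
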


Recall the notation $A\stackrel{\beta}\longrightarrow A'$, which means that the alcoves 
$A$ and $A'$ are separated by a hyperplane orthogonal to the coroot $\beta$, which points 
in the direction from $A$ to $A'$; $A_\circ$ is the fundamental alcove, and $A_\mu$ is 
its translation by $\mu$. Note that the first two properties are not equivalent in 
the Kac-Moody case. The reason is that there are broken $\beta$-strings of real roots 
through $\alpha$. Indeed, for an affine root system, consider the positive roots 
$\alpha=\overline{\alpha}+k\delta$ and $\beta=-\overline{\alpha}+m\delta$ with $k,m>0$, 
and $\overline{\alpha}$ a root of the corresponding non-affine root system. Note that $\alpha+2\beta$ 
is a positive real root, but $\alpha+\beta$ is an imaginary root.

In the sequel we will treat $\la$-chains either as sequences of positive coroots 
or as sequences of $\la$-hyperplanes, passing between the two definitions without further mention.

\subsection{The Chevalley formula} 
Let us fix a dominant integral weight $\lambda$ and an arbitrary $\lambda$-chain. 
For $\gamma\in\La$, let $t_\gamma$ be the operator on $\La$ given by translation by $\gamma$. 
Note that we are not working in an affine Weyl group but inside the group of automorphisms of
the lattice $\La$. For a coroot $\alpha$, let $\alpha^\vee$ be the associated root. 
By definition, if $\alpha = w \alpha_i^\vee$ for $w\in W$ and $i\in I$, then $\alpha^\vee=w \alpha_i$. 
For a coroot $\alpha$, let $s_\alpha$ act on $\La$ by the reflection
\begin{align}
 s_\alpha \cdot \mu = \mu - \ip{\alpha}{\mu} \alpha^\vee\,.
\end{align}
For a $\la$-hyperplane $h=(\alpha,k)$, we use the notation
\begin{align}
\label{E:k}
k_h &:= k \\
\label{E:m}
m_h &:= \ip{\alpha}{\la}-k \\
\label{E:s}
s_h &:= s_\alpha \\
\label{E:sh}
\hs_h &:= t_{k\alpha^\vee} s_\alpha \\
\label{E:st}
\ts_h &:= t_{m_h\alpha^\vee} s_\alpha.
\end{align}
The quantity $k_h$ is the number of hyperplanes with the same coroot $\alpha$
before $h$ in the given $\la$-chain. Note that $\hs_h$ is the reflection in $\La$
across the affine hyperplane $H_{\alpha,k}$.

\begin{definition}\label{D:adapted} For $z,w\in W$ with $z\le w$, we say that a sequence of $\la$-hyperplanes
$h_1,h_2,\dotsc,h_q$ (not necessarily increasing in some $\la$-chain) is \emph{$[z,w]$-adapted} if the coroots of the hyperplanes
are the associated coroots for a saturated Bruhat chain from $z$ to $w$:
\begin{equation}\label{brch}
z\lessdot zs_{h_1}\lessdot zs_{h_1}s_{h_2}\lessdot \ldots \lessdot zs_{h_1}s_{h_2} \dotsm s_{h_q}=w\,.
\end{equation}
We say that a sequence of $\la$-hyperplanes is \emph{$z$-adapted} if it is $[z,w]$-adapted for some $w\ge z$.
\end{definition}

\begin{thm}\label{kchev} Let $\lambda$ be a dominant weight. With respect to the lex $\la$-chain, we have
\begin{align}
\label{chevdom}
[L^{\la}]\,[\OO_z] &= \sum_{\substack{(h_1<\dotsm<h_q) \\ \text{$z$-adapted}}} e^{z\hs_{h_1}\dotsm\hs_{h_q}(\la)}\,[\OO_{zs_{h_1}\ldots s_{h_q}}]\,, \\
\label{chevantidom}
[L^{-\la}]\,[\OO_z] &= \sum_{\substack{(h_1>\dotsm>h_q) \\ \text{$z$-adapted}}}  (-1)^{q} \, e^{-z\ts_{h_1}\ldots\ts_{h_q}(\lambda)}\,[\OO_{zs_{h_1}\ldots s_{h_q}}]\,.
\end{align}
\end{thm}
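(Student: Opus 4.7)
The plan is to derive Theorem \ref{kchev} from the LS path Chevalley formulas of Corollary \ref{C:Chevalley} by constructing weight- and Schubert-class-preserving bijections between the LS path indexing sets and the alcove-model indexing sets. For the dominant formula \eqref{chevdom}, the target is a bijection
\[\Phi_z\::\:\{\,p\in\Ta^\la\mid \phi(p)\ge zW_\la\,\}\;\longleftrightarrow\;\{\,(h_1<\dotsm<h_q)\text{ that are }z\text{-adapted in the lex }\la\text{-chain}\,\},\]
under which $p(1)$ corresponds to $z\hs_{h_1}\dotsm\hs_{h_q}(\la)$ and $\up(z,p)$ corresponds to $zs_{h_1}\dotsm s_{h_q}$. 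The starting point is the crystal isomorphism between LS paths and admissible alcove paths in the lex $\la$-chain from \cite{LP1}, defined by perturbing the LS path and passing to a limit. Under this bijection, the passage between successive directions $\sigma_j W_\la>\sigma_{j+1}W_\la$ of an LS path is recorded by a saturated Bruhat chain in $W/W_\la$ whose covers are labeled by $\la$-hyperplanes arranged according to the EL-shelling of Bruhat intervals induced by the Dyer reflection order \cite{Dyer} encoded in the lex $\la$-chain; each affine reflection $\hs_{h_i}$ records the passage from one straight-line piece of the perturbed path to the next, reconstructing $p(1)$ exactly.

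The refinement to the Chevalley-relevant sub-collection proceeds by prepending to the path-associated alcove sequence a saturated Bruhat chain realizing $z\to\up(z,p)$, read off from the same lex $\la$-chain. The key assertion is that this prefix exists and is uniquely determined, and that the resulting concatenation remains increasing if and only if $\phi(p)\ge zW_\la$; the left multiplication by $z$ in the weight decoration accounts for the starting point of the prefix. The required compatibility between Deodhar lifts and the EL-shelling is supplied by the $b$-Bruhat order characterization of LS paths \cite{LS,St}, together with the Z-property (Proposition \ref{P:Bruhat}) and its coset version (Lemma \ref{L:Bruhatcoset}). Substituting the bijection into \eqref{chdom} produces \eqref{chevdom}.

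The antidominant formula \eqref{chevantidom} is handled by the dual construction on the lowest-weight crystal $\Ta^{-\la}$, as described in Remark \ref{R:antidominant}. One establishes a parallel bijection between pairs $(w,p)$ with $w\ge z$ and $p\in\Pd^\la_{w,z}$, and $z$-adapted decreasing sequences $(h_1>\dotsm>h_q)$ in the lex $\la$-chain satisfying $w=zs_{h_1}\dotsm s_{h_q}$. The replacement of $\hs_h$ by $\ts_h=t_{m_h\alpha^\vee}s_\alpha$ corresponds to using complementary heights $m_h=\ip{\alpha}{\la}-k_h$, which is appropriate for reading the alcove sequence in reverse with sign-reversed weights. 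The sign $(-1)^q$ matches $(-1)^{\ell(w)-\ell(z)}$ because $z$-adaptedness forces $q=\ell(w)-\ell(z)$. Substituting into \eqref{chantidom} yields \eqref{chevantidom}.

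The main obstacle is proving the refined bijection and verifying its decoration compatibilities. The technical core is showing that the EL-shelling of Bruhat intervals in $W/W_\la$ induced by the lex $\la$-chain labels the saturated chain $z\lessdot zs_{h_1}\lessdot\dotsm\lessdot\up(z,p)$ by exactly the $\la$-hyperplane prefix that must be appended to the path-associated sequence in order to produce a $z$-adapted increasing sequence. This reduces to a careful analysis of how Dyer reflection orders descend to parabolic quotients $W/W_\la$ and interact with the signature-rule description of crystal operators that governs the Deodhar-lift calculus established in Section \ref{lslifts}.
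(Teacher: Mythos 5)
Your plan---substitute a weight- and Schubert-class-preserving bijection into Corollary~\ref{C:Chevalley}, built from Dyer reflection orders, EL-shellability, and the $b$-Bruhat order---is indeed the paper's strategy (Propositions~\ref{P:LSincreasingalcove} and \ref{P:LSdecreasingalcove}). However, the bijection you sketch does not work as stated.

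You propose to take the \cite{LP1} alcove sequence for $p$ and prepend a saturated chain ``realizing $z\to\up(z,p)$.'' As written this overshoots: a chain from $z$ to $\up(z,p)$ is already the entire $z$-adapted sequence, so there is nothing left to prepend it to. Reading it charitably as a prefix realizing $z\to\up(z,\phi(p))$ with the remainder taken from \cite{LP1} still fails, because the remainder of the chain---from $\up(z,\phi(p))$ up to $\up(z,p)$---depends on $z$ through the iterated Deodhar lifts $z_j=\up(z_{j-1},\sigma_j)$, whereas the \cite{LP1} perturbation bijection is $z$-independent. Deodhar lifts do not commute with left translation, so a $z$-dependent prefix cannot be grafted onto a $z$-independent suffix. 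You also place the EL-shelling in $W/W_\la$, but the intervals $[z_{j-1},z_j]$ at issue live in $W$.

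The paper's construction builds the whole $z$-adapted chain at once. Group a lex-increasing hyperplane sequence by relative height into blocks $I_1,\dotsc,I_m$. By Proposition~\ref{shell} applied with the reflection order $<_\la$ of Lemma~\ref{L:<lambdareflorder}, the $j$-th block is the unique label-increasing saturated Bruhat chain from $z_{j-1}$ to $z_j$ in $W$. The pivotal Lemma~\ref{L:mainb} shows this chain has all labels in $\Phi_{b_j}^*$ if and only if $z_j=\up(z_{j-1},z_jW_\la)$; this is precisely the coupling between lex-monotonicity of the alcove sequence and the Deodhar-lift calculus that your prepending picture does not supply, and without it neither direction of the bijection is well defined. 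The weight identity~\eqref{E:incweight} is then a telescoping computation using $\hs_h\cdot b\la=b\la$ when $b=\rht(h)$. Your treatment of the antidominant case via duality, the dual reflection order, and relative coheights is in line with the paper's Section~\ref{S:lexdecan}, but it is subject to the same structural objection.
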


There are also analogues of the two commutation formulas \eqref{comm2} and \eqref{comm1} in terms of the alcove model (see Example \ref{X:alcdom} and \ref{X:alcantidom}), which are similar to (\ref{chevdom}) and (\ref{chevantidom}). Formula (\ref{chevdom}) is proved in Sections \ref{refl-ord}, \ref{b-bruhat}, 
and \ref{ls-nonrec}, based on the corresponding formula (\ref{chdom}) in terms of LS paths. 
The proof of (\ref{chevantidom}) is based on (\ref{chantidom}), and is completely similar, cf. Remark \ref{lreford} and Section \ref{S:lexdecan}.

\begin{rem} Theorem \ref{kchev} yields a formula for multiplying by $[\OO_{X_{s_i}}]$ by noting that
\begin{equation}\label{divisor}[\OO_{X_{s_i}}]=1-e^{\Lambda_i} [L^{-\Lambda_i}]\,,\end{equation}
where $\Lambda_i$ is the $i$-th fundamental weight.
\end{rem}

\begin{conj} Theorem {\rm \ref{kchev}} holds for any $\lambda$-chain.
\end{conj}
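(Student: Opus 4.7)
The plan is to prove independence of the choice of $\la$-chain. Since Theorem \ref{kchev} establishes both formulas \eqref{chevdom} and \eqref{chevantidom} for the lex $\la$-chain, it suffices to show that the right-hand sides of these formulas are unchanged if one replaces a given $\la$-chain by any other $\la$-chain. I would focus on \eqref{chevdom}; the argument for \eqref{chevantidom} should be entirely parallel, with the order reversed (reflecting the duality between dominant and antidominant weights already used in Section~\ref{mainresls}).

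The first step is a combinatorial reduction: any two $\la$-chains $\mathcal{L}, \mathcal{L}'$ are connected by a finite sequence of \emph{elementary swaps}, where an elementary swap exchanges two adjacent $\la$-hyperplanes $h, h'$ whose coroots are distinct, with the property that the resulting total order is still a $\la$-chain in the sense of Definition \ref{D:lambdachain}. Condition~(1) of that definition forces hyperplanes with the same coroot to appear in increasing order of height, so any swap must indeed involve distinct coroots; condition~(2) restricts which swaps remain $\la$-chains. To establish the reduction I would argue locally on the finite intervals where two $\la$-chains differ: given a finite set of $\la$-hyperplanes appearing in a different relative order in $\mathcal{L}$ and $\mathcal{L}'$, one can sort this set by adjacent transpositions respecting conditions~(1) and~(2), using that the counting constraint~(2) is itself local in the positions of hyperplanes with coroots $\alpha, \beta, \gamma=\alpha+m\beta$.

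The second step, which I expect to be the main obstacle, is to verify invariance of the sum in \eqref{chevdom} under an elementary swap. Fix a swap exchanging $h_i$ and $h_{i+1}$, with coroots $\alpha, \beta$. For a $z$-adapted subsequence $(h_{j_1} < \dotsm < h_{j_q})$, if it contains at most one of $\{h_i, h_{i+1}\}$, the contribution is clearly unaffected, since the factors $\hs_h$ and $s_h$ depend only on the hyperplane $h$ itself, not on its position in the chain. The real work concerns subsequences containing both hyperplanes. Here the Bruhat chain condition \eqref{brch} and the weight factor $e^{z\hs_{h_1}\dotsm\hs_{h_q}(\la)}$ both change, because the product is taken in a different order. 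The required invariance should follow from a Yang--Baxter-type local bijection on the rank-two subsystem generated by $\alpha$ and $\beta$: one matches saturated Bruhat covers of the form $u \lessdot us_\alpha \lessdot us_\alpha s_\beta$ with covers $u \lessdot us_\beta \lessdot us_\beta s_\alpha$ ending at the same element, and checks that the contributions $z\hs_{h_1}\dotsm\hs_{h_q}(\la)$ agree. The two subcases are: (a) $\alpha+m\beta$ is never a coroot for any $m\ge 1$ (``independent'' swap), where the bijection is a pure transposition and the reflection identity $\hs_{h_i}\hs_{h_{i+1}} = \hs_{h_{i+1}}\hs_{h_i'}$ for appropriate $h_i'$ gives invariance directly; (b) some $\gamma=\alpha+m\beta$ is a coroot, and the counting condition of Definition \ref{D:lambdachain}(2) is exactly what guarantees that the hyperplanes of coroot $\gamma$ interspersed between the swapped ones create a balanced correspondence between the ``before'' and ``after'' sets of $z$-adapted subsequences.

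The technical heart of step~two is case~(b), which is where Kac--Moody generality introduces new difficulties beyond the finite-dimensional alcove model analysis in \cite{LP,LP1}: for an affine root system, the $\beta$-string through $\alpha$ can be broken by imaginary roots, so the local rank-two analysis used in the finite case needs to be replaced by an argument based directly on the counting condition, together with the $b$-Bruhat order material developed in Sections \ref{refl-ord}--\ref{ls-nonrec}. I would handle this by induction on the number of hyperplanes of coroot $\gamma$ strictly between $h_i$ and $h_{i+1}$ in the chain (forced to be zero by adjacency, but the counting condition relates their global counts before and after), reducing case~(b) to repeated applications of case~(a). Finally, combining step~one and step~two with Theorem \ref{kchev} for the lex $\la$-chain yields the conjecture.
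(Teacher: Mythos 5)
First, note that the statement you are proving is left \emph{open} in the paper: it is stated as a conjecture, and the paper proves Theorem \ref{kchev} only for the lex $\la$-chain. So there is no proof to compare against, and the question is whether your outline actually closes the conjecture. It does not: it is a strategy with the two hardest points left unproved. The first gap is the reduction to ``elementary swaps.'' An adjacent transposition of two hyperplanes with coroots $\alpha\ne\beta$ is in general \emph{not} a move between $\la$-chains: if some $\gamma=\alpha+m\beta$ (or $\beta+m\alpha$) is a positive coroot with $\ip{\gamma}{\la}>0$, condition (2) of Definition \ref{D:lambdachain} pins the position of the $\gamma$-hyperplanes relative to the $\alpha$- and $\beta$-hyperplanes, and swapping just the two adjacent entries violates the counting identity. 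The correct local moves (as in the finite-type theory of \cite{LP,LP1}) are reversals of whole blocks attached to rank-two subsystems (Yang--Baxter moves), not transpositions; in Kac-Moody generality such rank-two subsystems can be of affine type and hence infinite, there are infinitely many $\la$-hyperplanes altogether, and you give no argument that two arbitrary $\la$-chains are connected by any reasonable (finite or locally finite) sequence of admissible moves, nor that the resulting term-by-term matching is compatible with the completed sum in \eqref{E:KTXSchubert}. Your appeal to ``locality'' of condition (2) does not address this, since the condition ties together hyperplanes that may be arbitrarily far apart in the chain.

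The second gap is the invariance of \eqref{chevdom} under a single move, which you yourself flag as the technical heart but then dispose of by asserting a ``Yang--Baxter-type local bijection'' and an induction ``on the number of hyperplanes of coroot $\gamma$ strictly between $h_i$ and $h_{i+1}$,'' a quantity you immediately note is zero by adjacency --- so the induction is vacuous as written, and no bijection is constructed. This is precisely where the Kac-Moody setting differs from \cite{LP}: as the paper points out after Proposition \ref{equivalence}, $\beta$-strings through $\alpha$ can be broken by imaginary roots (e.g.\ $\alpha+\beta$ imaginary while $\alpha+2\beta$ is real), so the finite rank-two case analysis that underlies the Yang--Baxter invariance in finite type does not transfer, and the matching of $z$-adapted subsequences must also be shown to preserve both the saturated Bruhat chain condition \eqref{brch} and the weights $z\hs_{h_1}\dotsm\hs_{h_q}\cdot\la$; the identity $\hs_{h_i}\hs_{h_{i+1}}=\hs_{h_{i+1}}\hs_{h_i'}$ you invoke is not specified and is not obviously available for the translated reflections $\hs_h$. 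An alternative route, closer in spirit to how the paper proves the lex case, would be to try to transport the bijection of Proposition \ref{P:LSincreasingalcove} to an arbitrary $\la$-chain, replacing the reflection order $<_\la$ by one adapted to the given chain; but then one must prove that every $\la$-chain restricts, within each relative height, to (an initial section of) a reflection order, which is again a substantive open step. As it stands, the proposal identifies the right difficulties but does not resolve them.
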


In fact, as the finite-type $K$-Chevalley formula in \cite{LP} works for an arbitrary weight $\lambda$, 
Theorem \ref{kchev} should extend to an arbitrary $\lambda$ as well. This would require the 
corresponding generalization of the concept of a $\lambda$-chain in the Kac-Moody case,
which is non-trivial (in the finite case, we can use condition (3) in Proposition \ref{equivalence}
as a definition, and this is straightforward to extend to an arbitrary $\lambda$). In particular, 
a $\lambda$-chain will now have positive and negative roots, and the condition that the coroot $\alpha$ appears 
$\ip{\alpha}{\la}$ times must be replaced by the requirement that the number of occurrences of 
an arbitrary root $\alpha$ is the maximum of $0$ and $\ip{\alpha}{\la}$. For instance, we can define a 
$w(\lambda)$-chain, for $\lambda$ dominant and $w$ in the Weyl group (so that $w(\lambda)$ 
is in the Tits cone) essentially by applying $w$ to a $\lambda$-chain for dominant $\lambda$.
Also note that the negative reverse of a $\la$-chain should be a $(-\la)$-chain.

Assuming an arbitrary $\lambda$, in order to extend the proof techniques in \cite{LP}, 
it is not enough to consider only the $\lambda$-chains mentioned above. 
It turns out that it is necessary to uniformly prove a Chevalley formula in which the adapted sequences are chosen 
from a more general set of hyperplanes; these are obtained from the $\la$-chains above via a ``folding'' procedure, see \cite[Section 5]{LP1}. 
In such a ``folded $\lambda$-chain'', the same hyperplane can appear several times. 
Also note that, for general $\lambda$, a Chevalley formula will have cancellations, 
even if it is based on a minimal $\lambda$-chain (i.e., having no repeated hyperplanes). This is not the case when $\lambda$ 
is dominant or antidominant, as the formulas in Theorem \ref{kchev} have no cancellations.

\subsection{Reflection orders}\label{refl-ord} The proof of the alcove model 
Chevalley formula begins with some considerations regarding Dyer's {\em reflection orders} \cite{Dyer}. 
Let $W$ be the Weyl group of a symmetrizable Kac-Moody algebra $\geh$.

For the entire proof the dominant weight $\lambda$ is fixed.
Let $J=\{i\in I\mid s_i\cdot\lambda = \lambda\}$. Then $W_\lambda = W_J$ is the stabilizer of $\lambda$.
Let $W^\lambda = W^J$. Denote by $\Pcp$ the set of positive real coroots for $\geh$.
The Bruhat graph on $W$ is the graph with vertex set $W$ and a directed
edge from $v$ to $w$ if $v$ is covered by $w$. This edge is labeled by the unique element $\alpha\in \Pcp$
such that $w = v s_\alpha$. This is denoted $v\stackrel{\alpha}\longrightarrow w$. 

\begin{definition}\cite{Dyer} A reflection order is a total order on $\Pcp$ satisfying the following property: for every $\alpha,\beta\in\Pcp$ and $a,b\in{\mathbb R}_{>0}$ such that $a\alpha+b\beta\in\Pcp$, we have
\begin{equation}\label{ord3}\alpha<a\alpha+b\beta<\beta\;\;\;\;\mbox{or}\;\;\;\;\beta<a\alpha+b\beta<\alpha\,.\end{equation}
\end{definition}

The above definition is one of several equivalent ones. The main result related to reflection orders is the following one, known as the {\em EL-shellability} of the Bruhat order; we state only the part of this result that we need.

\begin{prop}\label{shell}\cite{Dyer}
Let $v\le w$ in Bruhat order. Then for any reflection order, 
there exists a unique saturated Bruhat chain from $v$ to $w$ with labels which increase in the reflection order.
\end{prop}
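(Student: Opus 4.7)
The plan is to prove Proposition \ref{shell} by induction on $d = \ell(w) - \ell(v)$, following Dyer's strategy of reducing to a rank-two base case via the diamond lemma for Coxeter groups. The base cases $d \in \{0, 1\}$ are trivial (empty chain, single edge).

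\textbf{Rank-two base case ($d = 2$).} A length-two Bruhat interval $[u, u'']$ contains, by the diamond lemma, exactly two maximal chains, and the four labels appearing on them lie in a single rank-two real sub-root-system $\Phi' \subset \Pcp$ generated by the two coroots labeling one of the chains. Axiom \eqref{ord3} restricted to $\Phi'$ forces the positive coroots of $\Phi'$ to appear in reflection order with the two ``simple'' coroots of $\Phi'$ at the extremes and every other positive coroot (a positive integer combination of the simples) lying strictly between its two summands. A direct inspection of the two possible label pairs then shows that exactly one of the two chains in $[u, u'']$ has its labels in increasing reflection order.

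\textbf{Existence (inductive step).} Starting from any saturated chain from $v$ to $w$, I would perform a bubble-sort: whenever consecutive labels $\alpha_k > \alpha_{k+1}$ appear out of order, apply the rank-two case to the length-two sub-interval $[v_{k-1}, v_{k+1}]$, replacing the sub-chain by the unique alternative whose label pair is increasing. Each swap strictly decreases the label sequence in the lexicographic order induced by the reflection order, so the procedure terminates, yielding an increasing chain.

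\textbf{Uniqueness (the main obstacle).} Suppose $C_1$ and $C_2$ are two distinct increasing chains from $v$ to $w$. Let $i$ be the first index where they diverge and $j$ the first index $> i$ where they reconverge. If $j - i < d$, restricting both chains to the sub-interval $[C_1(i), C_1(j)] = [C_2(i), C_2(j)]$ produces two increasing chains of length $j - i$ that disagree in the first edge, contradicting either the inductive hypothesis (if $j - i > 2$) or the rank-two base case (if $j - i = 2$). The residual case $i = 0$, $j = d$ --- the two chains meet only at the endpoints --- is the technical heart. To handle it, I would establish that the first label of any increasing chain from $v$ to $w$ equals $\alpha^* := \min\{\alpha \in \Pcp \mid v \lessdot v s_\alpha \le w\}$ in the reflection order, by tracking how bubble-sort applied to a chain with first label $\beta > \alpha^*$ cascades rank-two swaps from later positions back to position $1$ and strictly reduces the first label. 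The combinatorics of this propagation, driven by the rigidity of the rank-two base case, is the delicate bookkeeping that constitutes Dyer's EL-shellability proof.
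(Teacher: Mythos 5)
The paper does not prove Proposition~\ref{shell}; it states it as a citation to Dyer \cite{Dyer}. Your outline has the right overall shape — induction on $\ell(w)-\ell(v)$, a dihedral base case, bubble-sort for existence, and a divergence/reconvergence reduction for uniqueness — and the existence argument together with the reducible part of uniqueness (when the two chains agree on at least one interior vertex) is sound. One point worth making explicit in the rank-two step: in an infinite Coxeter group the four labels of the two saturated chains of a length-two interval are not a priori contained in a closed rank-two root subsystem; one must invoke Dyer's theory of \emph{reflection subgroups} to see that they generate a dihedral reflection subgroup whose positive coroots are exactly the positive combinations of its two canonical generators, so that axiom~\eqref{ord3} applies. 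That is not a decoration — it is why the notion of reflection order is set up the way it is.

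The genuine gap is the one you flag yourself, and the mechanism you sketch for closing it does not work. You propose to show that every increasing chain from $v$ to $w$ has first label $\alpha^* := \min\{\alpha\in\Pcp \mid v\lessdot vs_\alpha\le w\}$ ``by tracking how bubble-sort applied to a chain with first label $\beta>\alpha^*$ cascades rank-two swaps back to position $1$.'' But an increasing chain is a \emph{fixed point} of bubble-sort: there is no out-of-order adjacent pair, so nothing cascades and no contradiction emerges. What bubble-sort gives you is that \emph{some} increasing chain has first label at most the first label of the chain you started from; it does not rule out a second increasing chain with a larger first label, which is precisely what the residual case $i=0$, $j=d$ requires. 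That lemma on first labels (equivalently, that the increasing chain is lex-minimal and is unique) is the substantive content of Dyer's theorem, and neither a statement of it nor a proof appears in your proposal. As written, your argument establishes existence and reduces uniqueness to itself; the hard half is still open.
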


We now define a total order $<_\la$ on $\Pcp$ which depends on $\la$. 
The bottom of the order $<_\la$ consists of the coroots $\alpha\in\Pcp$
such that $\ip{\al}{\la}>0$. For two such coroots $\alpha$ and $\beta$, define $\alpha<\beta$ if
$(\alpha,0) < (\beta,0)$ in the lex $\la$-chain.
This forms an \emph{initial section \cite{Dyer}} of $<_\la$.
The top of the order $<_\la$ consists of the 
$\alpha\in\Pcp$ orthogonal to $\la$; such $\alpha$ form the positive coroots for the Weyl group $W_\la$,
and one may use any reflection order for them.

\begin{lem} \label{L:<lambdareflorder}
 The total order $<_\la$ is a reflection order on $\Pcp$.
\end{lem}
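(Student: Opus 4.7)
The plan is to verify the reflection-order inequality \eqref{ord3} for $\gamma := a\alpha + b\beta \in \Pcp$ by splitting into cases according to whether $\alpha$ and $\beta$ lie in the bottom section of $<_\la$ (those coroots with $\ip{\cdot}{\la}>0$) or in the top section (those orthogonal to $\la$). Since $\ip{\gamma}{\la} = a\ip{\alpha}{\la}+b\ip{\beta}{\la} \ge 0$, the coroot $\gamma$ lies in the top exactly when both $\alpha$ and $\beta$ do; otherwise it lies in the bottom. Consequently, in the ``mixed'' case the bottom-before-top convention automatically supplies one of the two inequalities in \eqref{ord3}, and only the remaining inequality will require a genuine computation.

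First I would treat the case that $\alpha$ and $\beta$ both lie in the bottom, where $<_\la$ is controlled by the lex map of Proposition \ref{constr-lambdachain}; at height $k=0$, this map reduces to $\alpha\mapsto v(\alpha):=\ip{\alpha}{\la}^{-1}(c_1^\alpha,\dotsc,c_r^\alpha)$, writing $\alpha=\sum_i c_i^\alpha\,\alpha_i^\vee$. A direct computation, using $c_i^\gamma=ac_i^\alpha+bc_i^\beta$ together with $\ip{\gamma}{\la}=a\ip{\alpha}{\la}+b\ip{\beta}{\la}$, yields the convex-combination identity $v(\gamma)=t\,v(\alpha)+(1-t)\,v(\beta)$ componentwise, with $t=a\ip{\alpha}{\la}/\ip{\gamma}{\la}\in(0,1)$. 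Since any proper convex combination of two lex-comparable vectors lies strictly between them in lex order (examine the first coordinate at which the two vectors differ), this produces the required strict sandwich $\alpha<_\la\gamma<_\la\beta$ (or its reverse, depending on the initial comparison).

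Next, the case that $\alpha,\beta$ both lie in the top is immediate: $\gamma$ then also satisfies $\ip{\gamma}{\la}=0$ and so lies in the top. A standard property of parabolic Coxeter subgroups identifies the top section with the set of positive coroots of $W_\la$, on which a reflection order has been chosen by hypothesis; hence \eqref{ord3} holds there by construction. Finally, in the mixed case, say $\alpha$ in the bottom and $\beta$ in the top, the same computation specializes (using $\ip{\beta}{\la}=0$) to $v(\gamma)-v(\alpha)=\bigl(b/(a\ip{\alpha}{\la})\bigr)\,(c_1^\beta,\dotsc,c_r^\beta)$; since $\beta\in\Pcp$ is a nonzero non-negative integer combination of simple coroots, the first nonzero coordinate of this difference is strictly positive, so $\alpha<_\la\gamma$, and combined with $\gamma<_\la\beta$ (by the bottom-before-top convention) this closes \eqref{ord3}. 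The subtlest point throughout is the convex-combination identity for $v(\gamma)$, which hinges on the normalization by $\ip{\alpha}{\la}$ built into the lex $\la$-chain and is what makes the lex order respect positive real combinations of coroots.
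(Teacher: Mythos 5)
Your argument is correct and follows essentially the same line as the paper's: split according to whether $\ip{\alpha}{\la}$ and $\ip{\beta}{\la}$ are positive or zero, use the convex-combination identity for the normalized coordinate vectors in the ``both bottom'' case, invoke the chosen reflection order on the coroots of $W_\la$ in the ``both top'' case, and handle the mixed case by a one-sided lex comparison plus the bottom-before-top convention. The only cosmetic difference is that you phrase the mixed case as a WLOG while the paper observes directly that $\alpha <_\la \beta$ forces $c_\alpha = 0 \Rightarrow c_\beta = 0$.
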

\begin{proof} Consider $\alpha<_\la\beta$ in $\Pcp$, and assume that $a\alpha+b\beta\in\Pcp$ 
for some $a,b\in \R_{>0}$. Let $c_\alpha=\ip{\alpha}{\la}$ and $c_\beta=\ip{\beta}{\la}$. 
It suffices to show \eqref{ord3}.
This holds if $c_\alpha=c_\beta=0$ since a reflection order was used for the positive coroots of $W_\la$.
We represent vectors in the basis of simple coroots as tuples of coordinates, using the 
chosen order on the simple coroots. Assume first that $c_\alpha,c_\beta>0$. It suffices to show that
\[\frac{\alpha}{c_\alpha}<\frac{a\alpha+b\beta}{ac_\alpha+bc_\beta}<\frac{\beta}{c_\beta}\]
in lexicographic order; here the inequality between the first vector and the last one is known, 
as it expresses $\alpha<_\lambda\beta$. The proof is completed by noting that the middle fraction can be written
\begin{align*}
 c\,\frac{\alpha}{c_\alpha} + (1-c)\, \frac{\beta}{c_\beta}\,,
\qquad\text{where}\qquad\text{$c=\frac{a c_\alpha}{a c_\alpha+bc_\beta}\in(0,1)$.}
\end{align*}
The remaining case is $c_\alpha>0$, $c_\beta=0$. It suffices to show
\[\frac{\alpha}{c_\alpha}<\frac{a\alpha+b\beta}{ac_\alpha}=\frac{\alpha}{c_\alpha}+\frac{b}{ac_\alpha}\beta\,,\]
which is obvious.
\end{proof}

\subsection{The $b$-Bruhat order}\label{b-bruhat}
The definitions in this section depend on the fixed dominant weight $\la$ and a fixed
rational number $b$. Let 
\begin{align*}
\Phi_b=\{\alpha\in\Pcp \mid b \,\ip{\alpha}{\la}\in\Z\}\qquad\qquad
\Phi_b^*=\Phi_b\setminus\{\alpha\in\Pcp \mid \ip{\alpha}{\la}=0\}. 
\end{align*}

\begin{definition}\cite{LS,St} The {\em $b$-Bruhat order} $\le_b$ on $W$ is defined by the Bruhat covers 
$v\overset{\alpha}{\rightarrow}{w}$ with $\alpha\in \Phi_b$.
\end{definition}

Clearly, for $b\in\Z$, $\Phi_b=\Pcp$ and $\le_b$ is the Bruhat order. For simplicity, we use the term Bruhat 
(resp. $b$-Bruhat) chain for a saturated chain in Bruhat (resp. $b$-Bruhat) order.

\begin{lem}\label{L:bchains} Suppose that $v\le_b w$. Then every Bruhat chain 
from $v$ to $w$ is a $b$-Bruhat chain. In particular, the Bruhat interval $[v,w]$
coincides with the corresponding $b$-Bruhat interval. 
\end{lem}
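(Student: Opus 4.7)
I would first observe that the second assertion follows from the first: for any $u\in[v,w]$, extending a Bruhat chain through $u$ to a full maximal chain from $v$ to $w$ shows (once the chain is known to be $b$-Bruhat) that $v\le_b u\le_b w$. So the task is to prove that every Bruhat chain from $v$ to $w$ has all of its labels in $\Phi_b$, given the existence of the one promised $b$-Bruhat chain.

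My strategy is to reduce to a rank-two \emph{diamond lemma}: in any Bruhat interval $[x,z]$ with $\ell(z)-\ell(x)=2$, there are exactly two Bruhat chains $x\lessdot y_1\lessdot z$ and $x\lessdot y_2\lessdot z$, and the four coroot labels of their edges lie in a single rank-two real coroot subsystem of $\Pcp$, namely the one arising from the finite dihedral subgroup of $W$ generated by the reflections appearing on either chain. Since any two linearly independent real coroots in such a rank-two subsystem span its coroot $\Z$-lattice, every label on the second chain is a $\Z$-linear combination of the labels of the first. Because $\Phi_b$ is the preimage of $\Z$ under the $\Z$-linear functional $b\ip{\,\cdot\,}{\lambda}$, it is closed under $\Z$-linear combinations that remain positive coroots; so if one of the two length-two chains is a $b$-Bruhat chain, so is the other.

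For the general case I would invoke the chain-connectivity of Bruhat intervals: any two maximal Bruhat chains in $[v,w]$ can be transformed into each other by a sequence of diamond moves, each swapping a length-two subchain $y_{i-1}\lessdot y_i\lessdot y_{i+1}$ with the alternative length-two chain through the other middle element of the rank-two subinterval $[y_{i-1},y_{i+1}]$. That length-two Bruhat intervals have exactly four elements is a theorem of Bj\"orner, and chain-connectivity via diamond moves is a standard consequence of the EL-shellability of the Bruhat order established in Proposition~\ref{shell}. Starting from the given $b$-Bruhat chain and applying diamond moves to reach any target Bruhat chain from $v$ to $w$, the rank-two lemma preserves $\Phi_b$-membership of all labels at each step, so the target chain is also a $b$-Bruhat chain.

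The main obstacle is the rank-two diamond lemma in the Kac-Moody setting, where one must verify that the reflections labeling the four edges of a length-two Bruhat diamond really do generate a finite dihedral subgroup of $W$ whose associated coroot subsystem in $\Pcp$ has rank two, and that the expected $\Z$-linear relations between the four labels hold in the ambient coroot lattice. This follows from applying the strong exchange condition to the two length-two Bruhat chains in $[x,z]$ to identify the dihedral subgroup, but carrying out the careful bookkeeping—expressing each label of the second chain as an explicit $\Z$-combination of the labels of the first—is the technical heart of the argument and the step to which $\Phi_b$-closure is ultimately applied.
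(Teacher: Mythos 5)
Your overall strategy coincides with the paper's: reduce to connectivity of maximal Bruhat chains by diamond moves (the paper cites Bj\"orner--Wachs' combinatorial sphere theorem for the open interval $(v,w)$ rather than EL-shellability, but these give the same diamond connectivity), and then verify that a single diamond move preserves the $\Phi_b$-labelling by a rank-two dihedral analysis.

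The rank-two step, however, contains a false intermediate claim. It is \emph{not} true that any two linearly independent real coroots in a rank-two real (co)root subsystem span its coroot $\Z$-lattice: in type $C_2$, the two long coroots span only an index-two sublattice of the coroot lattice, and similar failures occur in $G_2$. The correct, more limited fact -- which is what the paper actually establishes -- is that the two coroots labelling the consecutive edges of one side of a length-two Bruhat diamond generate the relevant dihedral subgroup as reflections (even though they need not be simple generators of it), and from this one deduces that the labels on the other side are $\Z$-linear combinations of them; the paper does this directly via the explicit expressions \eqref{gab} for the roots along the dihedral geodesic and a small case split on the position of the bottom element $u$ in the dihedral group. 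You do flag this bookkeeping as the ``technical heart'' and omit it, which is fair; but the general spanning statement you propose as a shortcut is wrong, so the case analysis cannot be bypassed that way. With the false sentence replaced by the observation that consecutive chain labels generate the dihedral subgroup (or by the paper's case analysis), your argument becomes correct and matches the paper's.
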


\begin{proof} 
Bj\"{o}rner and Wachs \cite{BW} showed that the order complex of the open Bruhat interval $(v,w)$ 
is a combinatorial sphere. 
It follows that any two Bruhat chains from $v$ to $w$
can be connected with a sequence of Bruhat chains from $v$ to $w$ such that any two adjacent chains
differ in exactly one position. By hypothesis, there is a $b$-Bruhat chain from $v$ to $w$.
It must be shown that any Bruhat chain from $v$ to $w$ is a $b$-Bruhat chain.
By the above connectedness and induction,
it suffices to prove this under the assumption that the $b$-Bruhat chain and the
Bruhat chain differ at exactly one position.
This involves studying the relationships of labels on an interval of length $2$ in a dihedral subgroup.

Consider a dihedral group with generators $s_\alpha,s_\beta$. Consider an interval of length 2 with minimum $u$ 
and two chains labeled by coroots $(\gamma,\delta)$ and $(\varepsilon,\phi)$. 
We may assume that $(\gamma,\delta)$ are $b$-Bruhat covers, 
and must show that $(\varepsilon,\phi)$ are $b$-Bruhat covers. It suffices to show that the latter coroots 
are integer linear combinations of the former; we denote this property by $(\gamma,\delta)\rightarrow (\varepsilon,\phi)$. 
For $k\ge 0$, let 
\begin{equation}\label{gab}\gamma_{2k}=(s_\alpha s_\beta)^k(\alpha)\,,\;\;\;\;\gamma_{2k+1}=(s_\alpha s_\beta)^k
s_\alpha(\beta)\,.
\end{equation}
The typical pairs $(\gamma,\delta)$, $(\varepsilon,\phi)$ are of the following form, 
where in each case we indicate the non-trivial property $(\gamma,\delta)\rightarrow (\varepsilon,\phi)$ 
to be proved:
\begin{itemize}
\item $(\alpha,s_\alpha(\gamma))\rightarrow (\gamma,\alpha)$, where $u=\ldots s_\alpha s_\beta$;
\item $(\gamma_{i},\gamma_{i+1})\rightarrow (\beta,\alpha)$, where $u=\ldots s_\beta s_\alpha$ 
and $\ell(u)=i$, for $i=0,1,\ldots$.
\end{itemize}
The first property is obvious. For the second one, note that the reflections 
$s_{\gamma_i}$ and $s_{\gamma_{i+1}}$ generate the dihedral group (although they are not Coxeter generators). So, by
\eqref{gab}, $\alpha$ and $\beta$ can be obtained by applying a certain sequence 
consisting of the new generators to $\gamma_i$ and another such sequence to $\gamma_{i+1}$. This concludes the proof. 
\end{proof}

\begin{lem}\label{L:deob} Consider $\sigma,\tau$ in $W^\lambda$ and $w_\lambda,w_\lambda'$ in $W_\lambda$. 
We have $\sigma w_\lambda\le_b \tau w_\lambda'$ if and only if $\sigma\le_b \tau$ and 
$\up(\sigma w_\lambda,\tau W_\lambda)\le \tau w_\lambda'$.
\end{lem}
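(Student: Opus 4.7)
The plan is to reduce both directions to Lemma \ref{L:bchains}, exploiting the fact that within any $W_\lambda$-coset the $b$-Bruhat order coincides with ordinary Bruhat order. The key elementary observation is that for $\sigma\in W^\lambda$ and $w_\lambda\le w_\lambda'$ in $W_\lambda$, we have $\sigma w_\lambda\le_b \sigma w_\lambda'$. Indeed, by Proposition \ref{P:coset}(2) a saturated Bruhat chain from $w_\lambda$ to $w_\lambda'$ inside $W_\lambda$ remains saturated when left-multiplied by $\sigma$, and each of its covers uses a reflection $s_\alpha\in W_\lambda$; the coroot $\alpha$ satisfies $\ip{\alpha}{\lambda}=0$, hence $\alpha\in\Phi_b$. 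In particular, $\sigma\le_b \sigma w_\lambda$ and $\tau\le_b \tau w_\lambda''$ for any $w_\lambda''\in W_\lambda$.

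For the forward direction, assume $\sigma w_\lambda\le_b \tau w_\lambda'$. By Proposition \ref{P:coset}(4) we have $\sigma W_\lambda\le \tau W_\lambda$, so $\sigma\le\tau$ in $W^\lambda$ and $\up(\sigma w_\lambda,\tau W_\lambda)$ is defined; the inequality $\up(\sigma w_\lambda,\tau W_\lambda)\le \tau w_\lambda'$ is immediate from the characterization of $\up$ as the Bruhat-minimum of $\{u \mid \sigma w_\lambda\le u,\ uW_\lambda=\tau W_\lambda\}$. For $\sigma\le_b\tau$, chain the elementary observation $\sigma\le_b \sigma w_\lambda$ with the hypothesis to get $\sigma\le_b \tau w_\lambda'$. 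Lemma \ref{L:bchains} then guarantees that every Bruhat chain from $\sigma$ to $\tau w_\lambda'$ is $b$-Bruhat; selecting one that passes through $\tau$, which exists because $\sigma\le\tau\le\tau w_\lambda'$, its initial segment is the desired $b$-Bruhat chain from $\sigma$ to $\tau$.

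For the backward direction, set $u:=\up(\sigma w_\lambda,\tau W_\lambda)=\tau w_\lambda''$ and assume $\sigma\le_b\tau$ together with $u\le \tau w_\lambda'$. The elementary observation gives $\tau\le_b u$ and $u\le_b \tau w_\lambda'$, so combining with $\sigma\le_b\tau$ yields $\sigma\le_b u$. By Lemma \ref{L:bchains}, every Bruhat chain from $\sigma$ to $u$ is $b$-Bruhat; concatenating any Bruhat chain $\sigma\to \sigma w_\lambda$ (which lies in the coset $\sigma W_\lambda$) with any Bruhat chain $\sigma w_\lambda\to u$ produces such a chain, whose tail from $\sigma w_\lambda$ to $u$ is therefore $b$-Bruhat. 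Transitivity with $u\le_b \tau w_\lambda'$ then yields $\sigma w_\lambda\le_b \tau w_\lambda'$. The main subtlety is the bootstrapping use of Lemma \ref{L:bchains}: once a $b$-Bruhat relation $x\le_b y$ is established, every element in the Bruhat interval $[x,y]$ automatically lies on a $b$-Bruhat chain from $x$, and this is precisely what lets us move freely between the intermediate elements $\sigma$, $\sigma w_\lambda$, $\tau$, $u$, and $\tau w_\lambda'$ without having to analyze coset-changing covers individually.
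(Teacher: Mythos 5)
Your proof follows essentially the same strategy as the paper's: reduce everything to Lemma \ref{L:bchains} together with the observation that Bruhat covers inside a $W_\lambda$-coset have labels orthogonal to $\lambda$ (hence lie in $\Phi_b$ for every $b$). The paper organizes this slightly more economically by first observing that the $b=0$ version of the lemma is Deodhar's characterization of $\up$, and then upgrading to general $b$ by applying Lemma \ref{L:bchains} once to the interval $[\sigma,\tau w_\lambda']$; you instead argue both directions directly and route through the extra intermediate element $u=\up(\sigma w_\lambda,\tau W_\lambda)$. The logic is the same.

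There is one small unjustified step in the backward direction. You claim ``the elementary observation gives $u\le_b\tau w_\lambda'$,'' but your elementary observation, as stated, takes as hypothesis $w_\lambda''\le w_\lambda'$ in $W_\lambda$, whereas what you are given is $u=\tau w_\lambda''\le\tau w_\lambda'$. The implication $\tau w_\lambda''\le\tau w_\lambda'\ \Rightarrow\ w_\lambda''\le w_\lambda'$ does hold (left multiplication by $\tau\in W^\lambda$ is a Bruhat poset isomorphism from $W_\lambda$ onto $\tau W_\lambda$, which follows from the length additivity in Proposition \ref{P:coset}(2) plus Proposition \ref{P:coset}(4)), but it is not trivial and is not covered by anything you cite. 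The quickest repair, fully in the spirit of the ``bootstrap'' you describe at the end, is to bypass the elementary observation here entirely: from $e\le w_\lambda'$ you already have $\tau\le_b\tau w_\lambda'$, and $\tau\le u\le\tau w_\lambda'$, so Lemma \ref{L:bchains} applied to the interval $[\tau,\tau w_\lambda']$ gives $u\le_b\tau w_\lambda'$. With that substitution the proof is correct.
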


\begin{proof} For $b=0$, this is just a restatement of 
Proposition \ref{P:Deodhar} (2) combined with Proposition \ref{P:coset} (4). 
Assuming that the two equivalent statements hold for $b=0$, it remains 
to show for an arbitrary $b$ that $\sigma w_\lambda\le_b \tau w_\lambda'$ 
if and only if $\sigma\le_b \tau$. For the ``if'' statement, note that 
$\tau\le_b\tau w_\lambda'$, since the labels of any corresponding Bruhat chain 
are orthogonal to $\lambda$. Thus $\sigma\le_b \tau w_\lambda'$, and the conclusion follows 
from Lemma \ref{L:bchains}, since $\sigma\le\sigma w_\lambda\le \tau w_\lambda'$. 
The ``only if'' statement is completely similar.
\end{proof}

\begin{lem}\label{L:mainb} Consider $\sigma,\tau\in W^\lambda$ and $w_\lambda, w_\lambda'\in W_\lambda$ 
such that $\sigma w_\lambda\le \tau w_\lambda'$. We have $\sigma\le_b \tau$ and 
$\tau w_\lambda'=\up(\sigma w_\lambda,\tau W_\lambda)$ if and only if the unique Bruhat chain 
from $\sigma w_\lambda$ to $\tau w_\lambda'$ with labels which are increasing
with respect to the reflection order $<_\lambda$ has all its labels in $\Phi_b^*$. 
\end{lem}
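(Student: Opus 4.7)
The plan is to analyze the unique Bruhat chain from $\sigma w_\lambda$ to $\tau w_\lambda'$ guaranteed by Proposition \ref{shell} applied to the reflection order $<_\lambda$ of Lemma \ref{L:<lambdareflorder}, say
\[ \sigma w_\lambda = u_0 \lessdot u_1 \lessdot \dotsm \lessdot u_q = \tau w_\lambda' \]
with labels $\gamma_1 <_\lambda \dotsm <_\lambda \gamma_q$. Since the initial section of $<_\lambda$ consists of coroots $\alpha$ with $\ip{\alpha}{\lambda}>0$ while the terminal section consists of the coroots orthogonal to $\lambda$, there exists an index $j$ so that $\gamma_1,\dotsc,\gamma_j$ are non-orthogonal to $\lambda$ and $\gamma_{j+1},\dotsc,\gamma_q$ are orthogonal to $\lambda$. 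The central structural claim to be proved is that the chain factors through the Deodhar lift, i.e., $u_j = \up(\sigma w_\lambda, \tau W_\lambda)$; write $\up$ for this element.

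To prove this, first observe that a Bruhat cover $u \lessdot u'$ within a single $W_\lambda$-coset satisfies $u^{-1}u' \in W_\lambda$, so its label is a coroot of $W_\lambda$, hence orthogonal to $\lambda$; conversely, a cover with a non-orthogonal label moves strictly between $W_\lambda$-cosets. Since $\up$ and $\tau w_\lambda'$ both lie in $\tau W_\lambda$, the unique increasing chain between them uses only orthogonal labels. I claim that the unique increasing chain from $\sigma w_\lambda$ to $\up$ uses only non-orthogonal labels: if not, some terminal labels of that chain would be orthogonal (by the shape of $<_\lambda$), so the element preceding the first such orthogonal label would lie in the same $W_\lambda$-coset as $\up$, namely $\tau W_\lambda$, would be $\ge \sigma w_\lambda$, and would be strictly below $\up$, contradicting the Bruhat-minimality of $\up$ in $\{\,w \ge \sigma w_\lambda \mid wW_\lambda = \tau W_\lambda\,\}$. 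Concatenating these two chains produces a chain from $\sigma w_\lambda$ to $\tau w_\lambda'$ whose labels are still increasing in $<_\lambda$ (since non-orthogonal coroots precede orthogonal ones), and by the uniqueness in Proposition \ref{shell} it coincides with the original chain, giving $u_j = \up$.

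Granting this factorization, both directions of the biconditional follow by invoking Lemmas \ref{L:bchains} and \ref{L:deob}. Condition (B) asserts that every $\gamma_i$ lies in $\Phi_b^*$, which decomposes into (i) every $\gamma_i$ is non-orthogonal to $\lambda$, and (ii) every $\gamma_i$ lies in $\Phi_b$. Condition (i) is equivalent to $j = q$, and by the factorization this is equivalent to $\up = \tau w_\lambda'$. Granting (i), the chain is a $b$-Bruhat chain from $\sigma w_\lambda$ to $\tau w_\lambda'$ if and only if (ii) holds, which by Lemma \ref{L:bchains} is equivalent to $\sigma w_\lambda \le_b \tau w_\lambda'$; by Lemma \ref{L:deob}, with the hypothesis $\up(\sigma w_\lambda, \tau W_\lambda) \le \tau w_\lambda'$ automatic from $\up = \tau w_\lambda'$, the latter is equivalent to $\sigma \le_b \tau$. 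Thus (B) is equivalent to the conjunction of $\up = \tau w_\lambda'$ and $\sigma \le_b \tau$, which is (A).

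The main obstacle is the structural factorization $u_j = \up$, whose proof rests on the combination of three ingredients: the placement of the $W_\lambda$-orthogonal coroots as the terminal section of the reflection order $<_\lambda$, the definition of $\up$ as a Bruhat minimum within a coset, and the uniqueness of the increasing chain from EL-shellability. Once this is in place, the equivalence of (A) and (B) is essentially bookkeeping built on the previously established lemmas.
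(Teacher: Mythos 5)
Your proof is correct and uses the same ingredients as the paper's — Lemmas \ref{L:deob} and \ref{L:bchains}, the shape of the reflection order $<_\lambda$, and the uniqueness from EL-shellability (Proposition \ref{shell}) — with a mild reorganization: you isolate the factorization $u_j = \up(\sigma w_\lambda, \tau W_\lambda)$ as a single structural claim and then deduce both directions from it, whereas the paper argues the two directions separately (the ``only if'' via a direct minimality contradiction, the ``if'' via a concatenation-and-uniqueness contradiction). Both your step asserting that the increasing chain from $\up(\sigma w_\lambda, \tau W_\lambda)$ to $\tau w_\lambda'$ has only orthogonal labels and the corresponding step in the paper's ``if'' direction quietly invoke the standard fact that the map $u \mapsto \tau u$ from $W_\lambda$ to the coset $\tau W_\lambda$ is a poset embedding, so the two arguments are at the same level of rigor.
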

\begin{proof} For the ``only if'' statement, by Proposition \ref{shell}
and Lemma \ref{L:<lambdareflorder} there is a unique saturated chain from
$\sigma w_\la$ to $\tau w_\la'$ that has labels increasing with respect to the
reflection order $<_\la$. By Lemma \ref{L:deob}, we have $\sigma w_\lambda\le_b \tau w_\lambda'$.
So the above Bruhat chain has all its labels in $\Phi_b$, by Lemma
\ref{L:bchains}. If it had labels orthogonal to $\lambda$, all of them would have to appear at the end of the chain, 
cf. the definition of the reflection order $<_\lambda$. But then $\tau w_\lambda'$ 
cannot be smallest among the elements of $\tau W_\lambda$ which are greater or 
equal to $\sigma w_\lambda$, contradicting the minimality of the lift.
For the ``if'' statement, note first that $\sigma\le_b \tau$ 
and $\up(\sigma w_\lambda,\tau W_\lambda)\le \tau w_\lambda'$, by Lemma \ref{L:deob}. 
If the latter inequality is strict, construct a chain from 
$\sigma w_\lambda$ to $\tau w_\lambda'$ by concatenating the chains determined 
by $<_\lambda$ from $\sigma w_\lambda$ to $\up(\sigma w_\lambda,\tau W_\lambda)$ 
and from $\up(\sigma w_\lambda,\tau W_\lambda)$ to $\tau w_\lambda'$. The first chain 
has all its labels in $\Phi_b^*$, by the ``only if'' statement just proved. 
Therefore, the constructed chain has increasing labels (with respect to $<_\la$), so it coincides with the one in the statement of the lemma (by the uniqueness property in Proposition \ref{shell}). The fact that the latter chain has all 
its labels in $\Phi_b^*$ is thus contradicted.
\end{proof}

\begin{rem}\label{lreford} There is a version of Lemma \ref{L:mainb} for the Deodhar lift ``down'', 
which is used in the proof of \eqref{chevantidom}, by analogy with the proof in Section \ref{ls-nonrec}. 
This version is based on a reflection order in which the roots corresponding to $W_\lambda$ form an
initial section, as opposed to being larger than the other roots, as in the case of $<_\lambda$.
\end{rem}

\subsection{The proof of \textrm{\eqref{chevdom}}} \label{ls-nonrec}
In the sequel, the total order on the $\la$-hyperplanes is the lex $\la$-chain. Let $\Inc_{w,z}^\la$ be the set of lex-increasing sequences of $\la$-hyperplanes which are $[z,w]$-adapted.

The dominant weight alcove model Chevalley formula \eqref{chevdom} is an immediate consequence of
the corresponding LS path formula \eqref{chdom} due to the following bijection.
Refer to \eqref{E:uppaths} and Definition \ref{D:adapted} for the key definitions.

\begin{prop} \label{P:LSincreasingalcove} There is a bijection 
$\Inc_{w,z}^\la \cong \Pu_{w,z}^\la$ that preserves weights: if
$H=(h_1<h_2<\dotsm<h_q)\in \Inc_{w,z}^\la$ maps to $p\in \Pu_{w,z}^\la$, then 
\begin{align}\label{E:incweight}
  p(1) = \wt(H) := z \,\hs_{h_1} \hs_{h_2}\dotsm \hs_{h_q} \cdot \la\,.
\end{align}
\end{prop}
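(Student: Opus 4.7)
The plan is to exploit the block structure of the lex $\la$-chain. For a $\la$-hyperplane $h=(\alpha,k)$, set $b_h := k/\ip{\alpha}{\la}\in[0,1)\cap\Q$. By Proposition \ref{constr-lambdachain}, the lex $\la$-chain orders hyperplanes first by $b_h$, so any $H=(h_1<\dotsm<h_q)\in\Inc^\la_{w,z}$ decomposes into maximal blocks of constant $b$-value $b^{(1)}<\dotsm<b^{(m)}$; within block $i$ all coroots lie in $\Phi_{b^{(i)}}^*$. Recording the group elements at block boundaries refines the adapted Bruhat chain to $z=v_0\lessdot\dotsm\lessdot v_m=w$, and within block $i$ one obtains a $<_\la$-increasing sub-chain from $v_{i-1}$ to $v_i$.

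To build the forward map $H\mapsto p$, apply Lemma \ref{L:mainb} to each block: the $<_\la$-increasing sub-chain from $v_{i-1}$ to $v_i$ is exactly the chain witnessing $v_{i-1}W_\la\le_{b^{(i)}} v_iW_\la$ with $v_i=\up(v_{i-1},v_iW_\la)$. Define $p$ to be the LS path whose decreasing directions in $W/W_\la$ are $\sigma_i := v_{m-i+1}W_\la$ for $i=1,\dotsc,m$, with break-points determined by the $b^{(i)}$. The $b^{(i)}$-Bruhat chains just produced are exactly what is needed for the non-recursive LS characterization in \cite{LS,St}. Iteratively computing $\up(z,p)$ back along the directions $\sigma_m,\sigma_{m-1},\dotsc,\sigma_1$ then reproduces the sequence $v_1,v_2,\dotsc,v_m$, so $\up(z,p)=v_m=w$ and $\phi(p)=\sigma_m=v_1W_\la\ge zW_\la$.

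For the inverse $p\mapsto H$, read off the directions $\sigma_1>\dotsm>\sigma_m$ of $p$ together with their associated break-point data $b^{(i)}$, and compute the iterated Deodhar up-lifts starting from $z$ to produce $v_0=z,v_1,\dotsc,v_m\in W$ with $v_iW_\la=\sigma_{m-i+1}$. The LS condition yields $v_{i-1}\le_{b^{(i)}} v_i$, and Proposition \ref{shell} combined with Lemma \ref{L:<lambdareflorder} and Lemma \ref{L:mainb} supplies the unique $<_\la$-increasing saturated Bruhat chain from $v_{i-1}$ to $v_i$ with all labels in $\Phi_{b^{(i)}}^*$. Concatenating across blocks yields $H\in\Inc^\la_{w,z}$ mapping to $p$; the two constructions are mutually inverse by the uniqueness clause in Proposition \ref{shell}.

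The weight identity \eqref{E:incweight} is verified block by block by telescoping the composition $z\hs_{h_1}\dotsm\hs_{h_q}\cdot\la$ using $\hs_h\cdot\mu=s_\alpha(\mu)+k\alpha^\vee$: within block $i$, the composed affine reflection has linear part sending $v_{i-1}W_\la$ to $v_iW_\la$, and its translation part, once propagated through the subsequent blocks by conjugation, contributes exactly the $i$-th piecewise-linear segment of $p$. The main obstacle I expect is precisely this weight bookkeeping: the translation parts of successive $\hs_{h_j}$ must be rearranged across block boundaries while respecting the $W_\la$-ambiguity in the coset representatives $v_i$, and matching them cleanly to the piecewise-linear segments of $p(1)$ requires careful use of the Deodhar lifting to select the correct representatives at each stage.
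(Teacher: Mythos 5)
Your strategy mirrors the paper's: block-decompose $H$ by constant relative height (the first lex coordinate), apply Lemma \ref{L:mainb} together with the reflection order $<_\la$ and the EL-shellability statement (Proposition \ref{shell}) block by block, read the LS data off the Weyl group elements at block boundaries, and invert via iterated Deodhar lifts and the unique $<_\la$-increasing chain in each block. Nevertheless, the forward map as you state it has an off-by-one error. An LS path whose nontrivial break-points are exactly the nonzero block values must carry one more direction than there are such blocks, and that extra direction is the \emph{final} one, $\phi(p)=v_0W_\la=zW_\la$. You set $\sigma_i=v_{m-i+1}W_\la$ only for $i=1,\dotsc,m$, so whenever $H$ contains no hyperplane of relative height $0$ you drop this segment. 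Concretely, if every $h_j\in H$ lies at relative height $1/2$, your forward map returns the straight-line path to $w\la$, and your inverse, applied to that path, produces hyperplanes of relative height $0$; the composition is not the identity. The paper avoids this by always adjoining a possibly empty block $I_1$ at relative height $0$, so that $z_1W_\la$ is always the final direction and the count of directions matches the break-point data $b_1=0<b_2<\dotsm<b_m$.

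Separately, the worry raised in your last sentence is unfounded: both sides of \eqref{E:incweight} act on the $W_\la$-fixed weight $\la$, so the choice of coset representatives $v_i$ is irrelevant to the weight identity and no Deodhar lifting is needed there. The computation telescopes cleanly once one notes that $\hs_h\cdot(b\la)=b\la$ for $h=(\alpha,k)$ of relative height $b$ (since $\hs_h\cdot b\la=b\la+(k-b\ip{\alpha}{\la})\alpha^\vee$ and $k=b\ip{\alpha}{\la}$); grouping the $\hs_h$ by relative height then reduces \eqref{E:incweight} to the endpoint formula $p(1)=\sum_j(b_{j+1}-b_j)\,\sigma_j\cdot\la$ of \cite[(8.3)]{St}.
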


In the remainder of this subsection we fix a Bruhat interval $[z,w]$, construct a map 
$\Inc_{w,z}^\la\to\Pu_{w,z}^\la$, then a map
$\Pu_{w,z}^\la\to \Inc_{w,z}^\la$, show that the maps are inverses,
and finally that \eqref{E:incweight} is satisfied.

The proof starts by recalling the non-recursive description of 
LS paths \cite{LS} (see also \cite{St}). A path $p\in\Ta^\la$ is given by two sequences
\begin{equation}\label{E:lsste}
0 = b_1<b_2<b_3<\ldots<b_m<b_{m+1}=1 \qquad\phi(p)=\sigma_1<_{b_2}<\sigma_{2}<_{b_3}\ldots <_{b_m}\sigma_m=\iota(p)\,,
\end{equation}
where $b_i\in \Q$ and $\sigma_i\in W/W_\la$, cf. Sections \ref{mainresls} and \ref{lslifts}. 
This data encodes the sequence of vectors 
$(b_{m+1}-b_m)\sigma_m\cdot\la$, $\dotsc$, $(b_3-b_2)\sigma_2\cdot\la$, $(b_2-b_1)\sigma_1\cdot\la$.

Let $H=(h_1<h_2<\dotsm<h_q) \in \Inc_{w,z}^\la$. The \emph{relative height} $\rht(h)$
and \emph{relative coheight} $\rcht(h)$ of the $\la$-hyperplane $h=(\alpha,k)$ are defined by
\begin{align}
\label{E:relht}
\rht(h) &= \dfrac{k}{\ip{\alpha}{\la}}\\
\label{E:relcoht}
\rcht(h) &= 1 - \rht(h).
\end{align}
By the definition of the lex $\la$-chain, the relative heights of $h_1,h_2,\dotsc,h_q$ 
form a weakly increasing sequence in $[0,1) \cap \Q$.
Let us call the sequence of distinct nonzero relative heights $0<b_2<b_3<\dotsm<b_m<1$, and let $b_1=0$.
Note that the relative height $0$ is treated differently;
in part, this is because the $0$-Bruhat order is the same as the Bruhat order.
For $j\ge 1$, let $I_j$ be the subinterval of $H$ consisting of the elements of relative height $b_j$;
these sets are all nonempty, except perhaps $I_1$.

Since $H\in\Inc_{w,z}^\la$, there is a saturated Bruhat chain
\begin{align} \label{E:thechain}
 z \lessdot z s_{h_1} \lessdot zs_{h_1} s_{h_2} \lessdot \dotsm \lessdot z s_{h_1}\dotsm s_{h_q}=w.
\end{align}
We pick out some of the elements in the above chain by multiplying by groups of reflections
given by the subsets $I_j$. For $0\le j\le m$, define the Weyl group elements
\begin{align}
\label{E:z}
  z_j &= z \prod_{h\in I_1\cup I_2\cup\dotsm\cup I_j}^{\longrightarrow}
\end{align}
where the (non-commutative) product over $h$ occurs from left to right in the 
order $h_1<h_2<\ldots$. In particular $z_0=z$. Let 
\begin{align}\label{E:ztosigma}
  \sigma_j = z_j W_\la \in W/W_\la\qquad\text{for $1\le j\le m$.}
\end{align}
The required map is $H\mapsto (b_2,\dotsc,b_m);(\sigma_1,\dotsc,\sigma_m)$.
This map is clearly well-defined: the $b_j$-Bruhat condition
is satisfied by the definition of relative height.

The inverse map is constructed using Deodhar lifts and the EL-shellability of the Bruhat order. We begin with an
LS path $p\in\Pu_{w,z}^\la$ in the form \eqref{E:lsste}. By definition, we have 
$zW_\la \le \phi(p)=\sigma_1$. Letting $z_0=z$, define the lifts
\begin{equation}\label{ups}
  z_j = \up(z_{j-1},\sigma_j)\qquad\text{for $j=1,2,\dotsc,m$.}
\end{equation}
By definition we have $z_m = \up(z,p)=w$. 
By the ``only if'' part of Lemma \ref{L:mainb}, for each $1\le j\le m$ there is a unique
$b_j$-Bruhat chain from $z_{j-1}$ to $z_j$ with labels in $\Phi_{b_j}^*$, 
which are increasing with respect to the reflection order $<_\lambda$. 
Let us replace each label $\beta$ in this chain with the pair $(\beta,b_j \ip{\beta}{\la})$, 
where the second component is in $\{0,1,\dotsc, \ip{\beta}{\la}-1\}$ as $\beta\in\Phi_{b_j}^*$. 
Thus, each such pair is a $\la$-hyperplane. 
By concatenating these chains we obtain a Bruhat chain from $z$ to $w$, together with
a sequence of $\la$-hyperplanes $H$. This defines the map $p\mapsto H$.

\begin{proof}[Proof of Proposition {\rm \ref{P:LSincreasingalcove}}]
The forward map was seen to be well-defined. For the well-definedness of the
inverse map, note first that the sequence of $\la$-hyperplanes is $[z,w]$-adapted by definition.
It is also lex-increasing: the relative heights of the 
$\la$-hyperplanes weakly increase by construction,
and within the same relative height the $\la$-hyperplanes increase because 
of the compatibility of the reflection order $<_\la$ on coroots with the lex $\la$-chain.

To show that the two maps are mutually inverse, the crucial fact to check is that
the forward map composed with the backward one is the identity.
This follows from the ``if'' part of Lemma \ref{L:mainb}, after recalling from the above discussion that the order of the
$\la$-hyperplanes with fixed relative height is given by the reflection order $<_\la$ on the coroots. 

Finally, we check the weight condition \eqref{E:incweight}. If $h=(\alpha,k)$ is a $\la$-hyperplane of relative height $b$, we have
\begin{align*}
  \hs_h \cdot b \la &= k \alpha^\vee + b (\la - \ip{\alpha}{\la} \alpha^\vee) \\
  &= b\la + (k - b \ip{\alpha}{\la})\alpha^\vee \\
  &= b\la
\end{align*}
by the definitions \eqref{E:sh} of $\hs$ and \eqref{E:relht} of relative height.
This given, using our previous conventions on ordered products and $b_{m+1}=1$, we have
\begin{align*}
  \left(\prod_{h\in I_m}^{\longrightarrow} \hs_h \right)\cdot \la = b_m\la + (b_{m+1}-b_m) \left(\prod_{h\in I_m}^{\longrightarrow} s_h \right)\cdot \la\,.
\end{align*}
Writing $b_m\la = b_{m-1}\la+(b_m-b_{m-1})\la$ and applying $\prod_{h\in I_{m-1}} \hs_h$ to the above equation,
we obtain
\begin{align*} 
\left(\prod_{h\in I_{m-1}\cup I_m}^{\longrightarrow} \hs_h \right)\cdot \la &= 
b_{m-1}\la + (b_m-b_{m-1})\left(\prod_{h\in I_{m-1}}^{\longrightarrow} s_h \right)\cdot \la + 
(b_{m+1}-b_m)\left(\prod_{h\in I_{m-1}\cup I_m}^{\longrightarrow} s_h \right)\cdot  \la\,.
\end{align*}
Iterating this until $\prod_{h\in I_1} \hs_h$ has been applied, then applying $z$,
and using the definition \eqref{E:z}, we obtain
\begin{align*}
  z \left(\prod_{h\in H}^{\longrightarrow} \hs_h \right)\cdot \la &= \sum_{j=1}^m (b_{j+1}-b_j)z_j \cdot \la \\
  &= \sum_{j=1}^m (b_{j+1}-b_j)\sigma_j \cdot \la \\
  &= p(1)\,.
\end{align*}
Here the last equality follows from \cite[(8.3)]{St}, or the fact that
$p$ consists of following the vectors $(b_{j+1}-b_j)\sigma_j\cdot \la$
for $j$ going from $m$ down to $1$.
\end{proof}

\begin{proof}[Proof of {\rm \eqref{chevdom}}] Immediate based on \eqref{chdom} and Proposition \ref{P:LSincreasingalcove}.
\end{proof}

\subsection{Lex-decreasing analogue}\label{S:lexdecan} Let $\Dec_{w,z}^\la$ be the set of lex-decreasing sequences of $\la$-hyperplanes which are $[z,w]$-adapted.

\begin{prop} \label{P:LSdecreasingalcove} There is a bijection 
$\Dec_{w,z}^\la\cong \Pd_{w,z}^\la$ that is weight preserving: if
$H=(h_1>h_2>\dotsm>h_q)\in \Dec_{w,z}^\la$ maps to $p\in \Pd_{w,z}^\la$, then 
\begin{align}\label{E:decweight}
  p(1) = \widetilde{\wt}(H) := z \,\ts_{h_1} \ts_{h_2}\dotsm \ts_{h_q} \cdot \la\,.
\end{align}
\end{prop}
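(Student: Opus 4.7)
The plan is to mirror the proof of Proposition \ref{P:LSincreasingalcove}, replacing each ingredient by its dual indicated in Remark \ref{R:antidominant}: $\up\leftrightarrow\down$, $\phi\leftrightarrow\iota$, relative heights $\rht(h)$ by relative coheights $\rcht(h)$, affine reflections $\hs_h$ by $\ts_h$, the LS-path direction sequence read backwards, and the reflection order $<_\lambda$ by the order $<_\lambda'$ indicated in Remark \ref{lreford}.

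First I would construct $<_\lambda'$: the positive coroots $\alpha$ with $\ip{\alpha}{\la}=0$ form an \emph{initial} section (ordered by any reflection order for $W_\lambda$), while on the remaining coroots the order is chosen so that its restriction to any coheight-class coincides with the lex-decreasing order on $\lambda$-hyperplanes. That $<_\lambda'$ is a reflection order is verified exactly as in Lemma \ref{L:<lambdareflorder}, with coheights $m_h$ in place of $k$. Next I would establish the down-lift analogue of Lemma \ref{L:mainb}: for $\sigma,\tau\in W^\lambda$ and $w_\lambda,w_\lambda'\in W_\lambda$ with $\sigma w_\lambda\le\tau w_\lambda'$, the conditions $\sigma\le_b\tau$ and $\sigma w_\lambda=\down(\tau w_\lambda',\sigma W_\lambda)$ are equivalent to the unique Bruhat chain from $\sigma w_\lambda$ to $\tau w_\lambda'$ with labels strictly increasing in $<_\lambda'$ having all labels in $\Phi_b^*$. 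The argument parallels that of Lemma \ref{L:mainb}, using that in $<_\lambda'$ the $W_\lambda$-labels form an initial section, so their presence in such a chain would force them to appear \emph{first}, contradicting the maximality characterizing the down lift.

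The bijection itself is then constructed as in Section \ref{ls-nonrec}. Given $H=(h_1>\dotsm>h_q)\in\Dec_{w,z}^\lambda$, its relative coheights $\rcht(h_i)$ weakly increase, so I partition $H$ into coheight-blocks $I_1,\dotsc,I_m$ whose distinct coheight values, together with $0$ and $1$, give breakpoints $0=b_1<b_2<\dotsm<b_m<b_{m+1}=1$ of an LS path. Along the Bruhat chain \eqref{brch}, setting
\[z_j = z\prod_{h\in I_1\cup\cdots\cup I_j}^{\longrightarrow} s_h\]
with $z_0=z$ and $z_m=w$, the LS-path directions $\sigma_j=z_jW_\lambda$ are taken so that, after a reindexing reflecting that the down-lift starts at $w$ and descends, the recursion $w_j=\down(w_{j-1},\sigma)$ yields $w_0=w$ and $w_m=z$. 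The inverse map is built by successively applying $\down$ along the directions of $p$ and invoking the down analogue of Lemma \ref{L:mainb} within each coheight-class, recovering each label $\beta$ as the $\lambda$-hyperplane $(\beta,(1-c)\ip{\beta}{\la})$ where $c$ is the current coheight. Verifying that the two maps are mutually inverse proceeds as in the increasing case.

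Finally, for the weight identity \eqref{E:decweight}, the dual fact
\[\ts_h\cdot c\lambda = c\lambda + (m_h - c\ip{\alpha}{\la})\alpha^\vee = c\lambda \quad\text{for } c=\rcht(h),\]
immediate from \eqref{E:st}, lets me telescope across the coheight-blocks exactly as in the increasing proof, with $\hs_h$ replaced by $\ts_h$, to obtain $p(1)=\widetilde{\wt}(H)$. The main technical obstacle is careful bookkeeping of the indexing between the coheight-blocks of $H$ and the direction sequence of $p$ so that both the down-lift recursion and the $\ts_h$-telescope align; conceptually the entire dualization is captured by $<_\lambda'$ together with the formulas \eqref{E:st}--\eqref{E:m}, and no genuinely new ideas beyond those already in Section \ref{ls-nonrec} are required.
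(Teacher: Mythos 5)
Your plan follows the paper's proof essentially step for step: dualize the increasing case by swapping $\up\leftrightarrow\down$, heights for coheights, $\hs_h$ for $\ts_h$, and replace $<_\lambda$ by the reflection order with $W_\lambda$-coroots as an initial section (which is what the paper's Remark~\ref{lreford} and the remark that ``the dual of a reflection order is again a reflection order'' are pointing to). Your proposal is correct and in fact more detailed than the paper's own terse sketch; the only thing to tidy up is the block-indexing of $I_1,\dots,I_m$ against the coheight values (the paper indexes blocks by $I_2,\dots,I_{m+1}$ and handles the coheight-$1$ block separately), which you yourself flagged as bookkeeping.
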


The proof and constructions are analogous to those in the lex-increasing case.
Some details are given below.

Let $H=\{h_1>h_2>\dotsm>h_q\}\in \Dec_{w,z}^\la$.

Since $H$ is lex-decreasing, the sequence $\{ \rcht(h_i)  \}$ of relative \emph{coheights} (see \eqref{E:relcoht})
is a weakly increasing sequence in $(0,1] \cap\Q$.
Let their distinct values other than $1$ be $0<b_2<b_3<\dotsm<b_m<1$ and let $b_{m+1}=1$.
Let $I_j$ be the elements in $H$ of relative coheight $b_j$, for $2\le j\le m+1$.
Then $I_j$ is nonempty, except possibly for $I_{m+1}$. 

For $1\le j\le m+1$ let
\begin{align}
  w_j = z \prod_{h\in I_2\cup I_3\cup\dotsm\cup I_j}^{\longrightarrow} s_h.
\end{align}
In particular, $w_1 = z$ and $w_{m+1}=w$. For $2\le j\le m+1$, the
elements of $I_j$ define a $(1-b_j)$-Bruhat chain from $w_{j-1}$ up to $w_j$.
But by definition, the $b$-Bruhat order coincides with the $(1-b)$-Bruhat order.
So we have a $b_j$-Bruhat chain.

Define $\sigma_j = w_j W_\la$, for $1\le j\le m$.

Then $\sigma_1 <_{b_2} \sigma_2 <_{b_3} \dotsm <_{b_m} \sigma_m$
defines an element $p \in\Ta^\la$ and $ \sigma_m \le w W_\la$. 

\begin{lem} We have $\down(w_{j+1},\sigma_j)=w_j$ for $1\le j\le m$.
\end{lem}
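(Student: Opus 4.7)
The plan is to prove the lemma by applying the ``down'' analogue of Lemma~\ref{L:mainb} mentioned in Remark~\ref{lreford}, one index $j$ at a time, to the saturated Bruhat chain from $w_j$ up to $w_{j+1}$ whose labels are exactly the coroots of the $\la$-hyperplanes in $I_{j+1}$, taken in the order inherited from $H$.

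First I would formalize the down analogue of Lemma~\ref{L:mainb}. Let $<_\la^*$ denote a reflection order in which the coroots of $W_\la$ form an initial section; for instance, the reverse $<_\la^{-1}$ of the order introduced in Section~\ref{refl-ord} is again a reflection order (this is immediate from the defining condition \eqref{ord3}) and has the required property. The down analogue then asserts: for $\sigma,\tau\in W^\la$ and $w_\la,w_\la'\in W_\la$ with $\sigma w_\la\le\tau w_\la'$, we have $\sigma\le_b\tau$ and $\sigma w_\la=\down(\tau w_\la',\sigma W_\la)$ if and only if the unique Bruhat chain from $\sigma w_\la$ to $\tau w_\la'$ with labels increasing in $<_\la^*$ has all its labels in $\Phi_b^*$. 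The proof mirrors that of Lemma~\ref{L:mainb}: the initial-section placement of $W_\la$-coroots forces any $W_\la$-coroot label to appear at the bottom of the chain, which would produce an element of the coset $\sigma W_\la$ strictly above $\sigma w_\la$ and still below $\tau w_\la'$, contradicting the maximality that defines $\down$.

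Next I would verify the hypotheses of this down analogue for the chain $w_j \lessdot \dotsm \lessdot w_{j+1}$ built from $I_{j+1}$. Every hyperplane in $I_{j+1}$ has relative coheight $b_{j+1}\in(0,1]$, so its coroot $\alpha$ satisfies $b_{j+1}\ip{\alpha}{\la}\in\Z_{>0}$; hence all these coroots lie in $\Phi_{b_{j+1}}^*$. Within $I_{j+1}$ the relative height is constant, equal to $1-b_{j+1}$, so the lex order on $I_{j+1}$ reduces to the lex order on the coroots induced by \eqref{E:stdvec}, which is precisely the restriction of $<_\la$ to $\Phi_{b_{j+1}}^*$. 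Since $H$ is lex-decreasing, the chain labels are $<_\la$-decreasing within $I_{j+1}$, hence $<_\la^*$-increasing. By Proposition~\ref{shell}, this is the unique saturated chain from $w_j$ to $w_{j+1}$ with labels increasing in $<_\la^*$.

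Finally, applying the ``if'' direction of the down analogue of Lemma~\ref{L:mainb} with $\sigma w_\la=w_j$, $\tau w_\la'=w_{j+1}$, and $b=b_{j+1}$ gives $w_j=\down(w_{j+1},w_j W_\la)=\down(w_{j+1},\sigma_j)$, as required. The main obstacle is the careful formulation and proof of the down version of Lemma~\ref{L:mainb}, which hinges on identifying the correct reflection order and confirming that within a single relative-coheight slab the lex $\la$-chain is compatible with $<_\la^*$; once that is in place, the rest of the argument is a direct transcription of the dominant-case bookkeeping carried out in Section~\ref{ls-nonrec}.
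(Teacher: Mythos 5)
Your proof is correct and follows exactly the route the paper intends but leaves implicit: the paper states only that the decreasing case is ``analogous'' and records in Remark~\ref{lreford} that the down-analogue of Lemma~\ref{L:mainb} should use a reflection order with the $W_\la$-coroots as an initial section. You correctly identify $<_\la^{-1}$ as such an order (the reversal of a reflection order is a reflection order, as the paper itself notes when remarking that ``the dual of a reflection order is again a reflection order''), verify that within a fixed relative-coheight slab the lex $\la$-chain restricts to $<_\la$ on coroots so that a lex-decreasing block becomes a $<_\la^{-1}$-increasing chain, and then invoke EL-shellability and the ``if'' direction of the down-version of Lemma~\ref{L:mainb} with $b=b_{j+1}$. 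This is precisely the dualization the authors had in mind.
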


It follows that $p\in \Pd_{w,z}^\la$. 

The definition of the inverse map is similar to that for the lex-increasing case.
One must use the fact that the dual of a reflection order is again a reflection order.

\subsection{Consequences of the bijections}

Let $\la$ be a dominant weight and $z,w\in W$. The \emph{Demazure module} of lowest weight $w\lambda$
is the module $U_q(\mathfrak{b}) v_{w\lambda}$, where $U_q(\mathfrak{b})$ is the upper triangular part
of the quantum universal enveloping algebra $U_q(\geh)$, and $v_{w\lambda}$ is a vector of extremal weight $w\lambda$
in the highest weight $U_q(\geh)$-module of highest weight $\lambda$.
The \emph{opposite Demazure module} of highest weight $z\lambda$ is the module 
$U_q(\mathfrak{b}_-) v_{z\lambda}$, where $U_q(\mathfrak{b}_-)$ is the lower triangular part of $U_q(\geh)$.

\begin{thm} \cite{Kas2,Li1} 
{\rm (1)} The Demazure crystal of lowest weight $w\lambda$ is given by the subcrystal of $\Ta^\la$
consisting of paths $p$ with $\iota(p) \le w\lambda$.

{\rm (2)} The opposite Demazure crystal of highest weight $z\lambda$ is given by the subcrystal of $\Ta^\la$
consisting of paths $p$ with $\phi(p) \ge z\lambda$.
\end{thm}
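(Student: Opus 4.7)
The plan is to prove both parts by induction on Weyl group length, matching Kashiwara's recursive construction of (opposite) Demazure crystals via string operators against the LS path model, using the $i$-string behavior of initial and final directions supplied by Proposition \ref{P:infin}.

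For part (1), set $B_w := \{p \in \Ta^\la : \iota(p) \le wW_\la\}$ and induct on $\ell(w)$. When $w = e$, the minimality of $eW_\la$ in $W/W_\la$ forces $\iota(p) = W_\la$, so the only path in $B_e$ is the straight-line path $\pi_\la$ to $\la$; this matches the module $\C v_\la$. For $s_iw > w$, Kashiwara's recursion realizes the Demazure crystal of $s_iw\la$ as the union of $i$-strings meeting the Demazure crystal of $w\la$, each such string lying entirely in the larger crystal. It therefore suffices to verify, for every $i$-string $\St$ with head $h$, two statements: first, $\St \cap B_w \ne \emptyset$ if and only if $h \in B_w$; second, if $h \in B_w$, then $\St \subseteq B_{s_iw}$. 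Both reduce to Proposition \ref{P:infin}, which says the initial directions along $\St$ are either constant or jump once from $\iota_0$ to $s_i\iota_0 > \iota_0$ immediately after $h$; combined with Lemma \ref{L:Bruhatcoset}, which promotes $\iota_0 \le wW_\la$ to $s_i\iota_0 \le s_iwW_\la$ under the hypothesis $s_iw > w$, this yields both assertions.

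Part (2) is entirely dual. Set $B^z := \{p \in \Ta^\la : \phi(p) \ge zW_\la\}$ and induct downward from the highest-weight extremal case (i.e., on the number of lowering steps needed to reach $z$ from a fixed starting element), using the symmetric recursion that applies $e_i$-strings whenever $s_iz < z$. The final-direction half of Proposition \ref{P:infin} (which gives the analogous dichotomy for $\phi$ along an $i$-string) and the dual application of Lemma \ref{L:Bruhatcoset} (invoked now with $s_i\tau \le \tau$) supply the required closure property; the base case identifies the full crystal $\Ta^\la$ with $B^z$ for the appropriate extremal $z$ (in the finite-dimensional case $z = w_0$, in the Kac-Moody case the extremal weight vector of highest weight $\la$).

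The main obstacle is the subtle bookkeeping when $s_i$ stabilizes the coset $\iota(p)W_\la$ (or $\phi(p)W_\la$), since Bruhat-order comparisons in $W/W_\la$ behave differently than in $W$: a simple reflection can act trivially on a coset yet act nontrivially on coset representatives. Lemma \ref{L:Bruhatcoset} together with Proposition \ref{P:coset}(3) resolves this by cleanly separating the three cases $s_i\sigma < \sigma$, $s_i\sigma = \sigma$, $s_i\sigma > \sigma$. Since the theorem is ultimately a restatement of results of Kashiwara and Littelmann, an alternative direct route is simply to cite \cite{Kas2,Li1}; the inductive argument above translates those proofs into the LS path language in a self-contained manner and makes explicit the compatibility with the machinery of Proposition \ref{P:infin} used elsewhere in this paper.
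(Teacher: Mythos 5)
The paper does not supply a proof of this statement; it simply cites \cite{Kas2,Li1}. You instead attempt a self-contained induction using Proposition~\ref{P:infin} and Lemma~\ref{L:Bruhatcoset}, which is a genuinely different (and in spirit reasonable) route, but there are concrete gaps.

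For part~(1): your two claims establish only the forward containment
$\bigcup\{\St:\St\cap B_w\ne\emptyset\}\subseteq B_{s_iw}$. To conclude equality with the Demazure crystal $\mathcal{B}_{s_iw}=\bigcup\{\St:\St\cap \mathcal{B}_w\ne\emptyset\}$ you also need the converse: if some $p\in\St$ has $\iota(p)\le s_iwW_\la$, then the head $h$ of $\St$ satisfies $\iota(h)\le wW_\la$. This does follow --- since $\iota(h)=\iota_0\le\iota(p)\le s_iwW_\la$ and then Lemma~\ref{L:Bruhatcoset} applied with $\sigma=s_i\iota_0\ge\iota_0$ and $\tau=s_iwW_\la\ge wW_\la$ gives $\iota_0\le wW_\la$ --- but you never state or prove it, and "it therefore suffices to verify two statements" is not accurate as written.

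For part~(2): the base case is inverted. The opposite Demazure module $U_q(\mathfrak{b}_-)v_{z\la}$ is the \emph{entire} module exactly when $z=e$ (then $v_{z\la}=v_\la$ is the highest-weight vector), and $B^e=\{p:\phi(p)\ge eW_\la\}$ is indeed all of $\Ta^\la$ since $eW_\la$ is minimal. When $z=w_0$ (finite type) or more generally when $z$ is large, $U_q(\mathfrak{b}_-)v_{z\la}$ is small, and in fact $B^{w_0}$ is a single path, not the full crystal. So the induction must start at $z=e$ and go \emph{up} in length, contrary to what you wrote ("the base case identifies the full crystal $\Ta^\la$ with $B^z$ \dots in the finite-dimensional case $z=w_0$"). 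The remark about "inducting downward from the highest-weight extremal case" is also internally inconsistent with the stated recursion $s_iz<z$.
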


Via Propositions \ref{P:LSincreasingalcove} and \ref{P:LSdecreasingalcove},
we have the following realization of Demazure and opposite Demazure crystals in terms of the alcove model,
whose crystal structure was defined in \cite{LP1} in Kac-Moody generality.

\begin{cor} \label{C:demopdem} {\rm (1)} The Demazure crystal of lowest weight $w\lambda$ is given by set of lex-decreasing
sequences of $\la$-hyperplanes adapted to intervals of the form $[z,w]$, for some $z\le w$.

{\rm (2)} The opposite Demazure crystal of highest weight $z\la$ is given by the
set of lex-increasing sequences of $\la$-hyperplanes which are $z$-adapted, that is, $[z,w]$-adapted
for some $w\ge z$.
\end{cor}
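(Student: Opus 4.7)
The plan is to deduce both parts of the corollary directly from the cited theorem of Kashiwara and Littelmann combined with the two bijections constructed earlier, Propositions \ref{P:LSincreasingalcove} and \ref{P:LSdecreasingalcove}. The cited theorem identifies the (opposite) Demazure crystal inside $\Ta^\la$ via an inequality on the initial (resp.\ final) direction of an LS path, and the Propositions convert such path-collections into alcove-model data.

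For part (2), first I would observe that by the theorem the opposite Demazure crystal is exactly
\[
\{\, p\in\Ta^\la \mid \phi(p)\ge zW_\la\,\}.
\]
Every such $p$ has a well-defined lift $\up(z,p)\in W$ by Proposition \ref{P:Deodhar}, and this lift satisfies $\up(z,p)\ge z$. Hence this set decomposes as the disjoint union $\bigsqcup_{w\ge z}\Pu_{w,z}^\la$, with the pieces indexed by the possible values of the lift. Applying the weight-preserving bijection of Proposition \ref{P:LSincreasingalcove} piece-by-piece yields a bijection onto $\bigsqcup_{w\ge z}\Inc_{w,z}^\la$, which is by Definition \ref{D:adapted} precisely the set of lex-increasing $z$-adapted sequences of $\la$-hyperplanes.

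For part (1), the argument is symmetric: the theorem identifies the Demazure crystal of lowest weight $w\la$ as $\{\,p\in\Ta^\la\mid \iota(p)\le wW_\la\,\}$, and $\down(w,p)$ is well-defined there by Proposition \ref{P:Deodhar}, giving a disjoint decomposition $\bigsqcup_{z\le w}\Pd_{w,z}^\la$. Proposition \ref{P:LSdecreasingalcove} then transports this to $\bigsqcup_{z\le w}\Dec_{w,z}^\la$, i.e.\ exactly the set of lex-decreasing sequences of $\la$-hyperplanes which are $[z,w]$-adapted for some $z\le w$.

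The only subtle point, and the one that requires a brief remark rather than a real obstacle, is that the corollary is an identification of \emph{crystals}, not just of underlying vertex sets. This is not something new to verify here: the crystal structure on alcove paths was defined in \cite{LP1} precisely so that the bijection between LS paths and alcove paths (refined in this paper into the bijections of Propositions \ref{P:LSincreasingalcove} and \ref{P:LSdecreasingalcove}) is a crystal isomorphism. Since the defining conditions on $\iota(p)$ and $\phi(p)$ cut out subcrystals on the LS path side, the images cut out the corresponding subcrystals on the alcove model side, completing the proof.
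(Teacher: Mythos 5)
Your proposal is correct and follows essentially the same route as the paper: the paper's own justification is precisely to combine the Kashiwara--Littelmann characterization of (opposite) Demazure crystals via initial/final directions of LS paths with the bijections of Propositions \ref{P:LSincreasingalcove} and \ref{P:LSdecreasingalcove}, with the crystal structure transported by the isomorphism established in \cite{LP1}. You merely make explicit the disjoint decomposition $\bigsqcup_{w\ge z}\Pu_{w,z}^\la$ (resp.\ $\bigsqcup_{z\le w}\Pd_{w,z}^\la$) and the crystal-structure remark that the paper leaves implicit.
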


\begin{rem} In the case of finite root systems, the statement of Corollary \ref{C:demopdem} 
(1) appears in \cite{LP}[Corollary 6.5].
\end{rem}

\section{Examples}\label{affine}

\subsection{Dominant weight} 
\label{SS:domLSexample}
Consider the affine Lie algebra $\geh$ of type $A_{3-1}^{(1)}$, the dominant weight $\la=\La_0+\La_1$,
and $w=s_0s_1s_2s_1=s_0s_2s_1s_2$. In this subsection we will obtain all 
coefficients $b^w_{z \la}$ for $\la$ and $w$ fixed as above and varying $z\le w$.
This will be done first by LS paths and then by the alcove model.

\begin{ex} \label{X:LSup}
We illustrate the dominant weight LS path Chevalley formula \eqref{comm2}.
The stabilizer subgroup $W_\la$ is $W_J$ with $J=\{2\}$,
and the representative of $w$ in $W^J$ is $s_0s_2s_1$.

\begin{figure}
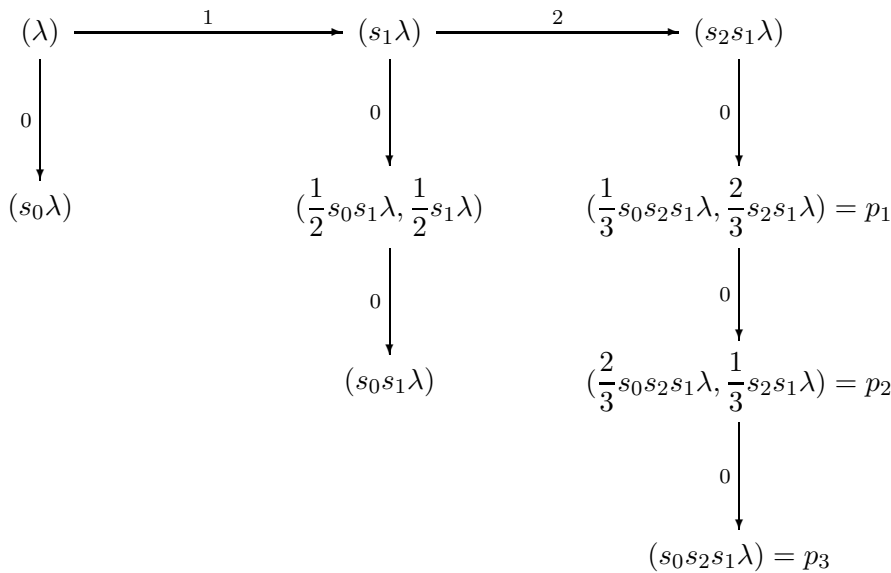

\begin{align*}
\begin{diagram}
\node{(\la) } \arrow{s,t,->}{0} \arrow{e,t,->}{1} \node{(s_1\la)} \arrow{s,t,->}{0} \arrow{e,t,->}{2} \node{(s_2s_1\la)}
\arrow{s,t,->}{0} \\
\node{(s_0\la)} \arrow{e,t,!}{} \node{(\frac{1}{2}s_0s_1\la,\frac{1}{2}s_1\la)} \arrow{s,t,->}{0} \arrow{e,t,!}{}
\node{(\frac{1}{3}s_0s_2s_1\la,\frac{2}{3}s_2s_1\la)=p_1} \arrow{s,t,->}{0} \\
\node[2]{(s_0s_1\la)} \arrow{e,t,!}{} \node{(\frac{2}{3}s_0s_2s_1\la,\frac{1}{3}s_2s_1\la)=p_2} \arrow{s,t,->}{0} \\
\node[3]{(s_0s_2s_1\la)=p_3}
\end{diagram}
\end{align*}
\label{F:XdomLS}
\caption{Demazure crystal}
\end{figure}

The Demazure subcrystal of $\Ta^\la$ of lowest weight $w\la$ is given in Figure \ref{F:XdomLS}.
The vertices are LS paths, which are written as sequences of vectors in $\Q\otimes  \La$.
The crystal operators $f_i$ are denoted by arrows labeled $i$. Recall that traversing
an arrow $i$ changes weight by $-\alpha_i$.
In the $K$-Chevalley formula \eqref{comm2}, any LS path $p\in \Pu^\lambda_{w,z}$
must have initial direction $w\lambda=s_0s_2s_1\lambda$. Here the set of such paths is
$\{p_1, p_2, p_3\}$. For each of these paths $p$, the set of $z$ such that $w=\up(z,p)$ is given as follows.
For $p_1$ and $p_2$, we have $z\in \{s_1s_2, s_1s_2s_1\}$, and for $p_3$ we have $z\in\{s_1s_2,s_1s_2s_1,s_0s_1s_2,s_0s_1s_2s_1\}$. This yields a total of $8$ pairs $(z,p)$. We have
\begin{align*}
b^w_{z \la} = \begin{cases} 
e^{\wt(p_3)}& \text{for $z\in \{s_0s_1s_2,s_0s_1s_2s_1\}$} \\
e^{\wt(p_1)}+e^{\wt(p_2)}+e^{\wt(p_3)}& \text{for $z\in \{s_1s_2,s_1s_2s_1\}$.}
\end{cases}
\end{align*}

Let us see for $p_1$ why $z=s_1s_2$ works, and $z=s_1$ does not. 
With $z=s_1s_2$, the Deodhar lifts (of the steps of the path $p_1$ listed in reverse order, with
$z$ at the front) are $(s_1s_2;s_1s_2s_1, s_0s_1s_2s_1)$, and this sequence ends at $w$.
For $z=s_1$, the Deodhar lifts form the sequence $(s_1;s_2s_1,s_0s_2s_1)$, which does not end at $w$.
\end{ex}

\begin{ex}\label{X:alcdom} Using $\geh$, $\la$, and $w$ as above,
we work out the dominant weight alcove model Chevalley formula \eqref{chevdom},
using the lex $\la$-chain based on the order $0<1<2$ on the Dynkin node set.

\begin{figure}
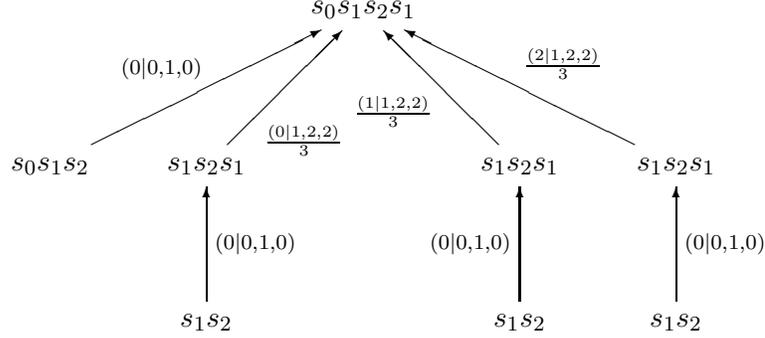

\begin{align*}
\begin{diagram}
\node[3]{s_0s_1s_2s_1}  \\
\node{s_0s_1s_2}\arrow{ene,t,->}{(0|0,1,0)}  \node{s_1s_2s_1}\arrow{ne,b,1}{\frac{(0|1,2,2)}{3}}
\node[2]{s_1s_2s_1}\arrow{nw,b}{\frac{(1|1,2,2)}{3}} \node{s_1s_2s_1}\arrow{wnw,t,->}{\frac{(2|1,2,2)}{3}}
 \\
\node[2]{s_1s_2} \arrow{n,b,->}{(0|0,1,0)} \node[2]{s_1s_2} \arrow{n,t,->}{(0|0,1,0)} 
\node{s_1s_2} \arrow{n,b,->}{(0|0,1,0)} 
\end{diagram}
\end{align*}
\label{F:DomTreeAlc}
\caption{Dominant tree}
\end{figure}

Let $\la$ and $w$ be fixed. We compute $b^w_{z \la}$ for varying $z\le w$ using $\Inc_{w,z}^\la$.
These sequences may be efficiently generated by a depth-first search, which generates
the tree in Figure \ref{F:DomTreeAlc}. This tree is rooted at $w$, its vertices are Weyl group elements (which can be repeated),
its edges are labeled by $\la$-hyperplanes, and its branches going towards the root
are lex-increasing. The vertices are in bijection
with $\bigsqcup_{z\le w} \Inc_{w,z}^\la$; given a vertex, the corresponding lex-increasing sequence
is obtained by reading the edge labels going from the vertex to the root.
We denote the $\la$-hyperplane $(\alpha,k)$ with coroot $\alpha=c_0\alpha_0^\vee+c_1\alpha_1^\vee+c_2\alpha_2^\vee$ 
by $\frac{(k| c_0,c_1,c_2)}{\ip{\alpha}{\la}}$, omitting the denominator if it is $1$.
The tree has $8$ vertices, which agrees with the 8 terms in Example \ref{X:LSup}.

Consider the rightmost leaf vertex in the tree. It is labeled $z=s_1s_2$,
and it defines the sequence $H=(h_1,h_2)\in \Inc^\la_{w,z}$ where $h_1=(0|0,1,0)$ and $h_2=1/3(2|1,2,2)$.

Let $\beta=\alpha_0^\vee+2\alpha_1^\vee+2\alpha_2^\vee$
and $\beta^\vee=\alpha_0+2\alpha_1+2\alpha_2$. Using the notation
$\alpha(ijk)$ for the element $i\alpha_0+j\alpha_1+k\alpha_2$, 
the weight of $H\in\Inc_{w,z}^\la$ is computed by
the right hand side of \eqref{E:incweight}:
\begin{align*}
\wt(H) &= s_1 s_2 s_1 t_{2\beta^\vee} s_\beta (\La_0+\La_1) \\
  &= s_1 s_2 s_1 t_{2\beta^\vee} (\La_0+\La_1-3\beta^\vee) \\
  &= s_1 s_2 s_1 (\La_0+\La_1-\alpha(122)) \\
  &= s_1 s_2 (\La_0+\La_1-\alpha(122)) \\
  &= s_1 (\La_0+\La_1-\alpha(121)) \\
  &= \La_0+\La_1 -\alpha(111).
\end{align*}
Let us compute the LS path in $\Pu^\la_{w,z}$
corresponding to $H\in\Inc_{w,z}^\la$. We use the notation of Section \ref{ls-nonrec}.
We have $m=2$, $(b_1,b_2,b_3)=(0,2/3,1)$,
$I_1 = \{h_1\}$, $I_2=\{h_2\}$,
$z_0=z=s_1s_2$, $z_1=s_1s_2s_1$, and $z_2=s_1s_2s_1s_\beta=s_0s_1s_2s_1$.
Recalling that $s_2\la=\la$, the directions of the LS path are given (in reverse order) by
$z_1\la = s_2s_1\la$ and $z_2\la = s_0s_2s_1\la$. The relative
distances traversed in these directions are given by $2/3-0$ and $1-2/3$, respectively. 
This is the path $p_1$ of Example \ref{X:LSup}. To check that $p_1\in \Pu_{w,z}^\la$,
we calculate the lifts $\up(s_1s_2,s_2s_1W_\la) = s_1s_2s_1=z_1$
and $\up(s_1s_2s_1,s_0s_2s_1W_\la)= s_0s_1s_2s_1=z_2=w$.
\end{ex}

\begin{rem} For the Chevalley rule for dominant weights, the alcove model is more efficient
than the LS path model. The alcove model uses only the computation of Bruhat
cocovers and comparisons of $\la$-hyperplanes with no wasted effort (if the
set of $\la$-hyperplanes adapted to cocovers of a given Weyl group element are remembered).
In contrast, the LS path formula requires
the generation of the entire Demazure crystal just to obtain the paths $p\in \Ta^\la$ 
with a specified initial direction $w\lambda$.
Then, for each such path $p$, one must compute the possible values of $z$ such that $\up(z,p)=w$.
For a fixed $p$, this can be done by starting with $w$, by considering which $w'$ in the second step (i.e., coset) of $p$ make the lift of the initial step $\sigma$ equal to $w$, that is, $\up(w',\sigma)=w$, and by continuing recursively. Thus, one needs to compute 
many Deodhar lifts, which are given by a non-trivial recursive procedure \cite{D}.
\end{rem}

\subsection{An antidominant example}
With $\geh$, $\la$, and $w$ fixed as above,
we compute the coefficients $b^w_{z,-\la}$ for all $z\le w$, again both by LS paths
and by the alcove model.

\begin{ex}\label{X:LSdown} We illustrate the
antidominant weight LS path Chevalley formula \eqref{comm1}. 

\begin{figure}
\begin{align*}
\begin{diagram}
\node{s_2} \arrow{s,t,->}{} \arrow{e,t,->}{} \node{s_1s_2} \arrow{s,t,->}{} \arrow{e,t,->}{} \node{s_1s_2s_1}
\arrow{s,t,->}{} \\
\node{s_0s_2} \arrow{e,t,!}{} \node{s_1s_2} \arrow{s,t,->}{} \arrow{e,t,!}{}
\node{s_1s_2s_1} \arrow{s,t,->}{} \\
\node[2]{s_0s_1s_2} \arrow{e,t,!}{} \node{s_1s_2s_1} \arrow{s,t,->}{} \\
\node[3]{s_0s_1s_2s_1}
\end{diagram}
\end{align*}
\caption{Computations of $\down$} 
\label{F:down}
\end{figure}

The entire set of 9 LS paths given in Figure \ref{F:XdomLS} must be used.
These paths form the Demazure crystal of lowest weight $w\lambda$. 
In Figure \ref{F:down}, in the position of each path $p$ in Figure \ref{F:XdomLS},
we place the element $\down(w,p)$, where we recall that $w=s_0s_1s_2s_1$.
The coefficients $b^w_{z,-\la}$ can be read from this diagram. For example,
if $z=s_1s_2$ then the coefficient is $e^{-\wt(q_1)}+e^{-\wt(q_2)}$, where $q_1=(s_1\la)$ and 
$q_2=((1/2) s_0s_1\la, (1/2) s_1\la)$
are the top two paths in the second column of the diagram in Figure \ref{F:XdomLS}.
We have $\wt(q_1)=\la-\alpha_1$ and $\wt(q_2)=\la-\alpha_0-\alpha_1$.
\end{ex}

\begin{ex}\label{X:alcantidom} We now work out the
antidominant weight alcove model Chevalley rule \eqref{chevantidom}.

\begin{figure}
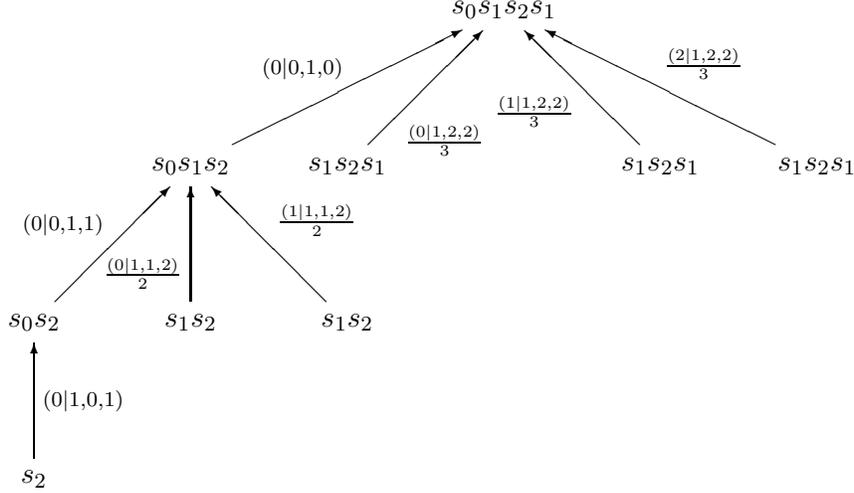

\begin{align*}
\begin{diagram}
\node[4]{s_0s_1s_2s_1}  \\
\node[2]{s_0s_1s_2} \arrow{ene,t,->}{(0|0,1,0)} \node{s_1s_2s_1}  \arrow{ne,b,1}{\frac{(0|1,2,2)}{3}}
\node[2]{s_1s_2s_1} \arrow{nw,b,2}{\frac{(1|1,2,2)}{3}} \node{s_1s_2s_1}  \arrow{wnw,t,->}{\frac{(2|1,2,2)}{3}} \\
\node{s_0s_2} \arrow{ne,t}{(0|0,1,1)} \node{s_1s_2} \arrow{n,t,1}{\frac{(0|1,1,2)}{2}}
\node{s_1s_2} \arrow{nw,t}{\frac{(1|1,1,2)}{2}} \\
\node{s_2}\arrow{n,b,->}{(0|1,0,1)}
\end{diagram}
\end{align*}
\caption{Antidominant tree}
\label{F:anti}
\end{figure}

As in the dominant case we create a suitable tree; see Figure \ref{F:anti}.
This tree has nine vertices, as there are 9 LS paths in Example \ref{X:LSdown}.
As before, given a vertex in the tree, we consider the path from the vertex up to the root.
Consider the sole vertex labeled $z=s_0 s_2$. The path to the root is given by
$H=(h_1>h_2)\in \Dec_{w,z}^\la$ where $h_1=(0|0,1,1)$ and $h_2=(0|0,1,0)$.
We now find the corresponding LS path in $\Pd^\la_{w,z}$ where $z=s_1s_2$. 
Since both $h_1$ and $h_2$ have relative coheight $1$, in the notation of
Section \ref{S:lexdecan}, we have $m=1$, $b_2=1$, $w_1=z=s_0s_2$, $w_2=w=s_0s_1s_2s_1$,
and $\sigma_1 = s_0s_2W_\la = s_0 W_\la$. The path is given by the single vector $(s_0\la)$.
Its endpoint is $s_0\la$. We have
$\down(s_0s_1s_2s_1, s_0W_\la) = s_0s_2=z$. The required saturated Bruhat chain in the interval
$[s_0s_2,s_0s_1s_2s_1]$ is uniquely specified by having increasing coroot labels with respect to
the reflection order on $\Phi^{\vee +}$ dual to the one induced by $0<1<2$.

We compute $\widetilde{\wt}(H)$. Let $\gamma=\alpha_1^\vee+\alpha_2^\vee$.
Then $\gamma=\alpha_1+\alpha_2$, $\ip{\gamma}{\la}=1$, and 
\begin{align*}
  \widetilde{\wt}(H) &= s_0 s_2 t_{\gamma^\vee} s_{\gamma} t_{\alpha_1} s_{1} (\La_0+\La_1) \\
  &= s_0 s_2 t_{\gamma^\vee} s_\gamma t_{\alpha_1} (\La_0+\La_1-\alpha_1) \\
  &= s_0 s_2 t_{\gamma^\vee} s_\gamma (\La_0+\La_1) \\
  &= s_0 s_2 t_{\gamma^\vee} (\La_0+\La_1-\gamma^\vee) \\
  &= s_0 s_2 (\La_0+\La_1) \\
  &= s_0 (\La_0+\La_1)
\end{align*}
which agrees with the weight of the LS path.

\end{ex}

\subsection{The affine Grassmannian} Let us consider the $K$-theory of the affine Grassmannian 
$Gr_{SL_n}$ of type $A_{n-1}$. It is known that we can index the vertices of the crystal 
of highest weight $\Lambda_0$ by the {\em $n$-restricted partitions} in the 
Misra-Miwa model \cite{mm}. Note the concepts of roof and base of an $n$-restricted 
partition $J$, which are defined based on some subtle combinatorial constructions \cite{akt}. 
The roof and ceiling lemmas in \cite{akt} state that ${\rm roof}(J)$ and ${\rm base}(J)$ 
are essentially the initial and final directions of the corresponding LS path, respectively. 
On another hand, it is known that the Schubert classes in the $K$-theory of $Gr_{SL_n}$ 
are indexed by {\em stable $n$-restricted (or core) partitions}, see \cite{LSS}. 
These correspond to lowest coset representatives in $W/W_{\Lambda_0}$, where $W$ is the 
affine symmetric group and $W_{\Lambda_0}$ is the symmetric group (on $n$ letters). 

Consider the Chevalley product $[L^{\Lambda_0}]\, [\OO_{X_I}]$, where $I$ is a
stable $n$-restricted partition. By the roof and ceiling lemmas, the $K$-Chevalley formula \eqref{chdom} becomes
\begin{equation}\label{affchev}
[L^{\Lambda_0}]\, [\OO_{X_I}]=\sum_{J\::\:{\rm base}(J)\ge I} e^{{\rm wt}(J)} [\OO_{X_{{\rm roof}(J)}}]\,;
\end{equation}
here the summation is over the corresponding $n$-restricted partitions, and for the 
definition of the weight ${\rm wt}(J)$ we also refer to \cite{akt}. Note that all the 
Deodhar lifts in \eqref{chdom} are trivial in this case.

Similarly, one can derive a formula for $[\OO_{X_{s_0}}]\, [\OO_{X_I}]$ based on 
\eqref{chantidom} and \eqref{divisor}. More generally, $\Lambda_0$ and $s_0$ can be replaced by 
any $\Lambda_i$ and $s_i$, using the corresponding notions in \cite{akt}. 

It would be interesting to connect the model of $n$-restricted partitions and the alcove model. 
As the analogues of the initial and final direction of an LS path are easy to read off 
in the alcove model, such a connection would lead to a more transparent construction 
of roof and base (the current construction is highly non-transparent). 


\end{document}